\documentclass[12pt]{article}
\usepackage{ayv_paperstyle}

\newcommand{\mext}[2]{(#1\triangleright_{#2})}
\newcommand{\taverage}[2]{\lfloor{#1}\rfloor_{#2}}
\newcommand{\extK}{\mathcal{K}\cup \{\star\}}
\newcommand{\extRd}{\rd\cup \{\star\}}
\newcommand{\extS}{\mathcal{S}\cup \{\star\}}
\newcommand{\extSr}{\mathcal{S}^a\cup \{\star\}}

\title{Nonlocal balance equation:\\ representation and approximation of solution}

\author{Yurii Averboukh}
\date{}

\AtEndDocument{
	\vspace{10pt}
	\begin{tabular}{ll}
		Yurii Averboukh:  &
		Krasovskii Institute of Mathematics and Mechanics,  \\ & 
		16 S. Kovalevskaya str.,  Yekaterinburg, Russia; \\
		&
		HSE University,  \\ & 
		11 Pokrovsky Bulvar,  Moscow, Russia;
		\\ & 
		\email{ayv@imm.uran.ru}
\end{tabular}}

\begin{document}
	
	\maketitle
	
\begin{abstract}{We study a nonlocal balance equation that describes the evolution of a  system consisting of infinitely many identical particles those move along a deterministic dynamics and  can also either disappear or give a spring. In this case, the solution of the balance equation is considered in the space of nonnegative measures. We prove the superposition principle for the examined nonlocal balance equation. Furthermore, we interpret the source/sink term as a probability rate of jumps from/to a remote point. Using this idea and replacing the deterministic dynamics of each particle by a nonlinear Markov chain, we approximate the solution of the balance equation  by a solution of a system of ODEs and evaluate the corresponding approximation rate. This result can be used for construction of numerical solutions of the nonlocal balance equation.}
\msccode{35R06, 70F45, 60J27}
\keywords{balance equation, system of particles, superposition principle, Markov approximation}
\end{abstract}

\section{Introduction}\label{sect:intro}	
The paper is concerned with the study of a nonlocal balance equation, where it is assumed that the source/sink term describes the probability rate for a particle to  disappear or give a spring. Therefore, the examined equation takes the  form:
\begin{equation}\label{intro:eq:nonlocal_continuity}
	\partial_t m(t)+\operatorname{div}(f(t,x,m(t))m(t))=g(t,x,m(t))m(t).
\end{equation} It is endowed with the initial condition 
\[m(0)=m_0.\]
Here, $m(t)$ is a measure on the phase space that describes a distribution of particles. Within the framework of particle interpretation of the nonlocal balance equation,  for each time $t$ and measure $m$, $x\mapsto f(t,x,m)$ is a velocity field, while $g(t,x,m)$ provides a probability rate for a particle to disappear or to give a spring at the point $x$ while the distribution of all particles at time $t$ is $m$. Such form of the nonlocal balance equation is, in particular, used within the study of model of opinion dynamics with time-varying weights \cite{Duteil2022,Piccoli2021}. 

The nonlocal balance equation is a natural extension of the nonlocal continuity equation. The latter was examined in several works (see~\cite{Crippa_existence,Kolokoltsov_DE,KEIMER2023183} and reference therein) and finds various applications in the analysis of pedestrian flows, opinion dynamics, etc~\cite{Ayi2021,Piccoli2018,McQuade2019,Albi2019}. Notice that  balance equation~\eqref{intro:eq:nonlocal_continuity} becomes a continuity equation if we simply put $g\equiv 0$. In this case, the total quantity of elements of the system  is preserved, i.e., the solution is a flow in the space of probability measures. 
The general balance equation implies that the right-hand side is an arbitrary signed measure. Its solution is considered on the space of signed measures. This is due to the fact that, if the negative part of this right-hand side is singular w.r.t. the measure variable, the solution has also a negative part~\cite[Theorem 7.1.1]{Kolokoltsov_DE}. The existence and uniqueness results for general balance equation were discussed in~\cite{Piccoli2014,Piccoli2019,Pogodaev2022}. Additionally, papers \cite{Pogodaev2022,Colombo2015} provide the sensitivity analysis of solution to the general balance equation. Furthermore, let us mention papers \cite{Keimer2018,Bayen2022} where  systems of balance laws were examined.

The main results of the paper are the following. First, we derive the superposition principle that represents a solution of balance equation~\eqref{intro:eq:nonlocal_continuity} through a distribution on the space of curves with time-varying weights (see Theorem \ref{th:superposition}).  The second result is the conservation form of the balance equation. Here we rewrite the balance equation as an equation on a flow of probability measures with the right-hand side given by a L\'evy-Khinchine generator acting on an augmented space (see Theorem \ref{th:restriction_kynetic}). The third result is an approximation of the solution of the original nonlocal balance equation by a solution of a system of ODEs. Here, we assume that the velocity filed $f$ and the initial measure $m_0$ are supported on a compact set $\mathcal{K}$. The approximating system of ODEs takes the form
\begin{equation}\label{intro:eq:ODE}\frac{d}{dt}\beta(t)=\beta(t)Q(t,\beta(t))+\beta(t)G(t,\beta(t)).\end{equation} Here, 
\begin{itemize}
	\item given $t\in [0,T]$, $\beta(t)$ is a row-vector indexed by elements of some finite set $\mathcal{S}$, 
	\item for each time $t$ and vector $\beta$, $G(t,\beta)$ are $Q(t,\beta)$ are matrices indexed alsp by elements of $\mathcal{S}$; 
	\item the matrix $G(t,\beta)$ is diagonal;
	\item the matrix $Q(t,\beta)$ is Kolmogorov (we call a matrix Kolmogorov if it has nonnegative entries outside the diagonal whereas the sum of entries on each row is zero).  
\end{itemize}
In the case where the set $\mathcal{S}$ is close to the compact $\mathcal{K}$ and  the matrix $Q$ approximates the  velocity field on the set $\mathcal{K}$  (see condition \ref{cond:torus}--\ref{cond:variance}),  we   evaluate in Theorem~\ref{th:approx} the distance between the measure $m(t)$ and the empirical distribution $\sum_{{x}\in\mathcal{S}}\beta_{{x}}(t)\delta_{{x}}$ for each time $t$ (here $\delta_{{x}}$ stands for the Dirac measure concentrated at $x$).  Additionally, we give the way how one can construct a set $\mathcal S$ and a matrix $Q$ to derive an arbitrary small approximation rate (see Proposition~\ref{prop:example}).  The matrix $G$ is entirely determined by the function $g$ and the set~$\mathcal{S}$.  Notice that, applying a  numerical scheme to  system of ODEs~\eqref{intro:eq:ODE}, one can obtain  a numerical solution of  nonlocal balance equation~\eqref{intro:eq:nonlocal_continuity} with the controlled accuracy.

Let us briefly comment the main results. 

The superposition principle obtained in the paper is a counterpart of the famous superposition principle for the continuity equation~\cite{Ambrosio,Ambrosio2008b,Stepanov2017,Bonicatto2018,Ambrosio_Crippa}. The latter states the equivalence between the Lagrangian and Eulerian descriptions of systems of identical particles on an Euclidean space. Notice that for the linear balance equation the superposition principle was obtained previously in~\cite{Bredies2022,Maniglia2007}.  The key ingredient in the superposition principle obtained in the paper for  nonlocal balance equation~\eqref{intro:eq:nonlocal_continuity} that is nonlinear is an analog of the evaluation operator that assigns to a measure on the space of trajectories with varying weights a measure on the phase space describing the instantaneous distribution of particles. The latter takes into account not only a position occupied by a particle but also its weight.

Further, recall that the natural phase space for the nonlocal balance equation is the space of nonnegative measures. We endow this space with the distance first proposed by Piccoli and Rossi (see~\cite{Piccoli2014,Piccoli2016,Piccoli2019,Piccoli2018}) that also works for signed measures. It generalizes the famous Wasserstein distance.
Thus, we call this metric  a PRW-distance (PRW-metric). 
It turns out  that   the PRW-distance between two nonnegative measures on a Polish space can be computed using the standard Wasserstein distance between probability measures obtained by adding a mass to a  remote point and normalization (see Section~\ref{sect:preliminaries} for the details).

The concept of remote point gives an interpretation of the source/sink term. An appearance of a particle is viewed as a jump from this remote point, while a disappearance is a jump to the remote point. Writing down operators those describe these jump processes, we arrive at the conservation form of the nonlocal balance equation.

Due to fact that the  $Q$ is a Kolmogorov matrix,  system~\eqref{intro:eq:ODE} describes an evolution of a distribution in  an infinite particle system where each particle can jump on the set $\mathcal{S}$ or disappear/give a spring. Adding to the set $\mathcal{S}$  the remote point, we derive the conservation form of~\eqref{intro:eq:ODE} that determines  a continuous-time Markov chain acting on the set $\mathcal{S}$ augmented by the remote point (see Proposition~\ref{prop:generator_finite}). Here, we, as above, view a disappearance of a particle as a jump to the remote point, while an appearance of a particle is regarded as a jump from the remote point. Notice that the approximation of the deterministic evolution with a controlled accuracy by a Markov chain was previously developed for mean field type systems (in this case, the evolution is given by a continuity equation) in~\cite{Averboukh2021} and for zero-sum differential games in~\cite{Averboukh2016}. The main novelty of the  case examined in the paper is the presence of jumps from/to the remote point with the intensities depending on current distributions. Thus, to evaluate the approximation rate, we propose a  synchronization of these jump processes in the original and approximating systems.



The paper is organized as follows. In Section~\ref{sect:preliminaries}, we give general notation and recall the notion of PRW-distance proposed in~\cite{Piccoli2014,Piccoli2019}. Moreover, this Section provides an equivalent form of the PRW-distance as a normalized usual Wasserstein distance between probabilities on a space augmented by a remote point. The equilibrium distribution of the curves with weights is discussed in Section~\ref{sect:weights}. Here, we, in particular, prove the existence and uniqueness result for the equilibrium distribution of curves with weights. In the next section, we derive the superposition principle for the examined nonlocal balance equation. Further, we rewrite the balance equation in the conservation form (see Section~\ref{sect:particle_balance}). In Section~\ref{sect:finite}, we introduce an approximating system of ODEs. Finally, we evaluate the approximation rate in Section~\ref{sect:coupled}.


\section{Preliminaries}\label{sect:preliminaries}
\subsection{General notation}
Given $a,b\in\mathbb{R}$, we denote by $a\wedge b$ the minimum of these numbers. Similarly, $a\vee b$ states for the maximum of $a$ and $b$.

If $n$ is a natural number, $X_1,\ldots,X_n$ are sets, $i_1,\ldots,i_k$ are distinct numbers from $\{1,\ldots,n\}$, then we denote by $\operatorname{p}^{i_1,\ldots,i_k}$ the natural projection from $X_1\times\ldots X_n$ onto $X_{i_1}\times\ldots X_{i_k}$, i.e.,
\[\operatorname{p}^{i_1,\ldots,i_k}(x_1,\ldots,x_n)\triangleq (x_{i_1},\ldots,x_{i_k}).\]

If $(\Omega,\mathcal{F})$ is a measurable space, $m$ is a measure on $\Omega$, then we denote by $\|m\|$ the total mass of $m$, i.e., $\|m\|\triangleq m(\Omega)$.  Furthermore, if $m_1,m_2$ are measures on $\mathcal{F}$, then we will write that $m_1\leqq m_2$ in the case where, for each $\Upsilon\in\mathcal{F}$,
\[m_1(\Upsilon)\leq m_2(\Upsilon).\]

If $(\Omega,\mathcal{F})$, $(\Omega',\mathcal{F}')$ are two measurable spaces, $m$ is a measure on $\mathcal{F}$, $h:\Omega\rightarrow\Omega'$ is a $\mathcal{F}/\mathcal{F}'$-measurable mapping, then  $h\sharp m$ stands for the push-forward of the measure $m$  through $h$ defined by the rule: for each $\Upsilon\in\mathcal{F}'$,
\[(h\sharp m)(\Upsilon)\triangleq m(h^{-1}(\Upsilon)).\] 

If $(X,\rho_X)$, $(Y,\rho_Y)$ are Polish spaces, then 
$C(X,Y)$ denotes the set of all continuous functions from $X$ to $Y$; $C_b(X,Y)$ is the subset of bounded continuous  functions. Further, $\operatorname{Lip}_{c}(X,Y)$  stands for the set of functions from $X$ to~$Y$ those are  Lipschitz continuous with the constant equal to $c$. When $Y=\mathbb{R}$, we will omit the second argument. In particular, $C_b(X)$ denotes the set of all continuous bounded functions from $X$ to $\mathbb{R}$. Moreover, $C_b(X)$ is endowed with the usual $\sup$-norm.

If $(X,\rho_X)$ is a Polish space, then we denote by $\mathcal{M}(X)$ the set of all (nonnegative) finite Borel measures on $X$, by $\mathcal{P}(X)$ the set of all probabilities on $X$, i.e.,
\[\mathcal{P}(X)=\{m\in\mathcal{M}(X):m(X)=1\}.\] 

On $\mathcal{M}(X)$, we consider the narrow convergence that is defined as follows: we say that a sequence of measures $\{m_n\}_{n=1}^\infty$ narrowly convergence to $m\in\mathcal{M}(X)$, if, for every $\phi\in C_b(X)$,
\[\int_X \phi(x)m_n(dx)\rightarrow \int_X\phi(x)m(dx)\text{ as }n\rightarrow \infty.\]  Notice that $\mathcal{P}(X)$ is closed in the topology of narrow convergence.


Further, if $p\geq 1$, then we consider the set of all probabilities with the finite $p$-th moment $\mathcal{P}^p(X)$. This means that a probability $m$ lies in $\mathcal{P}^p(X)$ iff, for some (equivalently, every) $x_*\in X$,
\[\int_X (\rho_X(x,x_*))^pm(dx)<\infty.\] 

The space $\mathcal{P}^p(X)$ is endowed with the Wasserstein distance defined by the rule: for $m_1,m_2\in\mathcal{P}^p(X)$,
\[W_p(m_1,m_2)\triangleq \Bigg[\inf_{\pi\in \Pi(m_1,m_2)}\int_{X\times X}(\rho_X(x_1,x_2))^p\pi(d(x_1,x_2))\Bigg]^{1/p}.\] Here $\Pi(m_1,m_2)$ denotes the set of  plans between $m_1,m_2$ that consists of all probabilities on $X\times X$ such that $\operatorname{p}^1\sharp \pi=m_1$, $\operatorname{p}^2\sharp \pi=m_2$.

\subsection{PRW-distance}\label{subsect:general_Wasserstein}
In this section, we discuss the extension of the Wasserstein distance to the space of nonnegative measure. The definition used in the paper follows the approaches to this problem proposed in~\cite{Piccoli2014,Piccoli2016,Piccoli2019}. 
\begin{definition}\label{def:generalized_wasserstein} Let $m_1,m_2\in \mathcal{M}(X)$, and let $p\geq 1$, $b>0$. We call the quantity $\mathcal{W}_{p,b}(m_1,m_2)$ defined by the rule
	\begin{equation}\label{prel:equality:W}\begin{split}
			(\mathcal{W}_{p,b}(m_1,m_2))^p\triangleq \\\inf\Bigg\{b^p\|m_1-\widehat{m}&{}_1\|+b^p\|m_2-\widehat{m}_2\|+\int_{X\times X}(\rho_X(x_1,x_2))^p\varpi(d(x_1,x_2)):\\ &{}\hspace{20pt}\widehat{m}_1\leqq m_1,\, \widehat{m}_2\leqq m_2,\, \|\widehat{m}_1\|=\|\widehat{m}_2\|,\, \varpi\in \Pi(\widehat{m}_1,\widehat{m}_2)\Bigg\}\end{split}\end{equation} a PRW-distance. 
	
	Here, for $m_1,m_2\in\mathcal{M}(X)$ of the equal masses ($\|m_1\|=\|m_2\|$),  $\Pi(m_1,m_2)$ is the set of measures on $X\times X$ such that $\operatorname{p}^1\sharp \pi=m_1$, $\operatorname{p}^2\sharp \pi=m_2$. 
\end{definition}
Notice that $\mathcal{W}_{p,b}$ coincides up to renormalization and renaming of parameters with the generalized Wasserstein distance introduced in \cite[Definition 11]{Piccoli2019}.

In the paper, we will widely use the representation of the metric $\mathcal{W}_{p,b}$ as a usual Wassertein metric on an augmented space. To construct this representation, we, first, extend a Polish space $(X,\rho_X)$ by adding to $X$ a remote point $\star$. The distance on $X\cup\{\star\}$ is defined by the rule:
\begin{equation}\label{prel:intro:rho_star}\rho_\star(x,y)\triangleq \left\{
	\begin{array}{ll}
		b, & x\in X, y=\star\text{ or }x=\star,y\in X,\\
		0, & x=y=\star,\\
		\rho_X(x,y)\wedge 2b, & x,y\in X
	\end{array}
	\right.\end{equation} Here $b$ is a positive constant used in Definition \ref{def:generalized_wasserstein}. When it does not lead to a confusion, we will omit the subindex $\star$. In particular, if $\mathcal{K}$ is a compact subset of the finite-dimensional space, $b>\operatorname{diam}(\mathcal{K})/2$, the restriction of the distance $\rho_\star$ on $\mathcal{K}$ coincides with the usual Euclidean distance. Thus, we will denote the distance on $\extK$ by $\|x_1-x_2\|$. 

If $m$ is a measure on $X$,  $R> \|m\|$, then we can extend $m$ to the space $X\cup\{\star\}$ setting, for a Borel set $\Upsilon\subset X\cup\{\star\}$,
\begin{equation}\label{intro:intro:m_ext}\mext{m}{R}(\Upsilon)\triangleq m(\Upsilon\cap X)+(R-m(X))\mathbbm{1}_\Upsilon(\star).\end{equation} 
If $m_1,m_2\in \mathcal{M}(X)$, when  $R\geq \|m_1\|\vee \|m_2\|$, then we put 
\begin{equation}\label{intro:W_p_b}\widetilde{\mathcal{W}}_{p,b}(m_1,m_2)\triangleq R^{1/p}W_p(R^{-1}\mext{m_1}{R},R^{-1}\mext{m_2}{R}). \end{equation}


\begin{proposition}\label{prop:W_p_b} If $m_1,m_2\in \mathcal{M}(X)$,  $R> \|m_1\|\vee\|m_2\|$, then \[\widetilde{\mathcal{W}}_{p,b}(m_1,m_2)=\mathcal{W}_{p,b}(m_1,m_2).\] In particular, the quantity $\widetilde{\mathcal{W}}_{p,b}(m_1,m_2)$ does not depend on the choice of  $R$ greater or equal to $\|m_1\|\vee \|m_2\|$.
	
\end{proposition}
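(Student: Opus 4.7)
The plan is to prove the equality $\widetilde{\mathcal{W}}_{p,b}=\mathcal{W}_{p,b}$ by establishing both inequalities through a correspondence between PRW-admissible triples $(\widehat{m}_1,\widehat{m}_2,\varpi)$ and transport plans $\pi$ on $(X\cup\{\star\})\times(X\cup\{\star\})$ with marginals $\mext{m_1}{R}$ and $\mext{m_2}{R}$. It is convenient to work with the unnormalized cost $R\cdot W_p^p=\int\rho_\star^p\,d\pi$, where $\pi$ is a plan of total mass $R$; this eliminates the renormalization factor and puts both sides of the sought identity on equal footing.

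For $\widetilde{\mathcal{W}}_{p,b}\le\mathcal{W}_{p,b}$, starting from an admissible PRW triple I would assemble $\pi$ blockwise on the four pieces of $(X\cup\{\star\})\times(X\cup\{\star\})$: set $\pi=\varpi$ on $X\times X$, $\pi=(m_1-\widehat{m}_1)\otimes\delta_\star$ on $X\times\{\star\}$, $\pi=\delta_\star\otimes(m_2-\widehat{m}_2)$ on $\{\star\}\times X$, and place a Dirac mass at $(\star,\star)$ whose weight is chosen so that the total mass is $R$. Using $\|\widehat{m}_1\|=\|\widehat{m}_2\|$ together with the hypothesis on $R$, one verifies that the marginals of $\pi$ agree with $\mext{m_i}{R}$ and that the $(\star,\star)$-weight is nonnegative. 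Summing the four block contributions to $\int\rho_\star^p\,d\pi$ gives precisely $\int_{X\times X}\rho_\star^p\,d\varpi+b^p(\|m_1\|-\|\widehat{m}_1\|)+b^p(\|m_2\|-\|\widehat{m}_2\|)$, which is at most the PRW cost of the triple because $\rho_\star\le\rho_X$ on $X\times X$. Passing to the infimum yields the inequality.

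For the reverse inequality $\mathcal{W}_{p,b}\le\widetilde{\mathcal{W}}_{p,b}$, I would reverse the construction: from any plan $\pi$ extract $\varpi:=\pi|_{X\times X}$ and $\widehat{m}_i:=\operatorname{p}^i\sharp\varpi$. The marginal equations of $\pi$ immediately yield $\widehat{m}_i\leqq m_i$ and $\|\widehat{m}_1\|=\|\widehat{m}_2\|$, so the triple is PRW-admissible. The subtle point is that the PRW cost uses $\rho_X$ whereas the plan cost uses the truncation $\rho_\star=\rho_X\wedge 2b$, so any mass of $\pi$ supported on far pairs $(x_1,x_2)$ with $\rho_X(x_1,x_2)>2b$ would inflate the extracted PRW cost. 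To handle this I would first perform a reduction on $\pi$: any such far pair is rerouted through $\star$ by splitting it into legs $(x_1,\star)$ and $(\star,x_2)$, which pays $2b^p$ per unit rather than $(2b)^p=2^pb^p\ge 2b^p$ (valid for $p\ge 1$), and therefore weakly decreases the plan cost. The hypothesis $R>\|m_1\|\vee\|m_2\|$ provides the spare mass at $\star$ required to execute this rerouting while preserving marginals. After the reduction, $\rho_X$ and $\rho_\star$ coincide on $\operatorname{supp}\varpi$, so the extracted PRW cost equals the plan cost, and taking the infimum on the right gives the inequality.

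The independence of $\widetilde{\mathcal{W}}_{p,b}$ from $R$ then follows at once, since $\mathcal{W}_{p,b}(m_1,m_2)$ is manifestly $R$-free. The principal obstacle is the rerouting step in the reverse direction: it must be organized via a measurable disintegration of $\pi$ along the level sets of $(x_1,x_2)\mapsto\rho_X(x_1,x_2)$, and one must verify that nonnegativity of each modified block is preserved, which is precisely where the lower bound on $R$ is used.
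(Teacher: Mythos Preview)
Your overall strategy coincides with the paper's: both inequalities are obtained by passing between PRW-admissible triples $(\widehat m_1,\widehat m_2,\varpi)$ and couplings $\pi$ of $\mext{m_1}{R}$ and $\mext{m_2}{R}$, with $\varpi$ corresponding to the $X\times X$ block of $\pi$. Your construction for $\widetilde{\mathcal{W}}_{p,b}\le\mathcal{W}_{p,b}$ is essentially identical to the paper's.

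For the reverse inequality your rerouting step has a gap. You propose to eliminate each far pair $(x_1,x_2)$ (i.e.\ with $\rho_X(x_1,x_2)>2b$) by moving its mass to $(x_1,\star)$ and $(\star,x_2)$ while withdrawing an equal amount from the $(\star,\star)$ atom. The claim that $R>\|m_1\|\vee\|m_2\|$ guarantees a sufficient reserve at $(\star,\star)$ is not correct for an \emph{arbitrary} coupling $\pi$: the atom $\pi(\{(\star,\star)\})$ can vanish under this hypothesis (only $R>\|m_1\|+\|m_2\|$ would force it to be positive for every admissible $\pi$), so the rerouting as described may be infeasible. The paper sidesteps this by starting from an \emph{optimal} $\pi$ and invoking optimality to rule out far pairs on $\operatorname{supp}(\pi_{X,X})$; your rerouting inequality $2b^p\le(2b)^p$ is precisely the cost comparison that underlies that claim, but it needs the optimality of $\pi$ (or at least the availability of mass at $(\star,\star)$) to be turned into an actual modification of the plan. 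An alternative fix that avoids any mass juggling is to set $\varpi:=\pi|_{\{(x_1,x_2)\in X\times X:\rho_X(x_1,x_2)\le 2b\}}$, restricting to close pairs only; then $\rho_X=\rho_\star$ on $\operatorname{supp}(\varpi)$ by construction, the far-pair mass is absorbed into the terms $b^p\|m_i-\widehat m_i\|$ at cost $2b^p$ per unit, and since $2b^p\le(2b)^p$ for $p\ge1$ the extracted PRW cost is bounded by the $\pi$-cost directly.
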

\begin{proof}
	We have that 
	\[\begin{split}
		(\widetilde{\mathcal{W}}_{p,b}(m_1,m_2))^p=\inf\Bigg\{\int_{(X\cup\{\star\})\times (X\cup\{\star\})}(\rho_\star(x_1,&x_2))^p\pi(d(x_1,x_2)):\\ &\pi\in \Pi(\mext{m_1}{R},\mext{m_2}{R})\Bigg\}.\end{split}\] Given, $\pi\in \Pi(\mext{m_1}{R},\mext{m_2}{R})$ that is an optimal plan between $\mext{m_1}{R}$ and $\mext{m_2}{R}$, we represent $\pi$ as the sum of four measures:
	\[\pi=\pi_{X,X}+\pi_{X,\star}+\pi_{\star,X}+\pi_{\star,\star},\] where
	\begin{itemize}
		\item $\operatorname{supp}(\pi_{X,X})\subset X\times X$;
		\item $\operatorname{supp}(\pi_{X,\star})\subset X\times \{\star\}$;
		\item $\operatorname{supp}(\pi_{\star,X})\subset \{\star\}\times X$;
		\item $\operatorname{supp}(\pi_{\star,\star})\subset \{(\star,\star)\}$.
	\end{itemize} Notice that 
	\[m_1=\operatorname{p}^1\sharp(\pi_{X,X}+\pi_{X,\star}),\ \ m_2=\operatorname{p}^2\sharp(\pi_{X,X}+\pi_{\star,X}).\]
	Set $\widehat{m}_1\triangleq \operatorname{p}^1\sharp \pi_{X,X}$, $\widehat{m}_2\triangleq \operatorname{p}^2\sharp \pi_{X,X}$. By construction, we have that 
	\[\widehat{m}_1\leqq m_1,\ \ \widehat{m}_2\leqq m_2. \]
	Due to the definition of the measures $\mext{m_1}{R}$, $\mext{m_2}{R}$, we have that 
	\begin{equation}\label{prel:equality:pi_gen_wasserstein}\begin{split}
			\int_{(X\cup\{\star\})\times (X\cup\{\star\})}(&\rho_\star(x_1,x_2))^p\pi(d(x_1,x_2))\\=
			b^p\|m_1-\widehat{m}&{}_2\|+b^p\|m_2-\widehat{m}_2\|+\int_{X\times X}(\rho_\star(x_1,x_2))^p\pi_{X\times X}(d(x_1,x_2)).
		\end{split}
	\end{equation} Further, the assumption that $\pi$ is an optimal plan between $\mext{m_1}{R}$, $\mext{m_2}{R}$ and the definition  of the distance $\rho_\star$ (see~\eqref{prel:intro:rho_star}), give that  $\rho_\star=\rho_X$ on $\operatorname{supp}(\pi_{X,X})$. 
	This and~\eqref{prel:equality:pi_gen_wasserstein} imply that 
	\begin{equation}\label{prel:ineq:W_p_b}
		(\widetilde{\mathcal{W}}_{p,b}(m_1,m_2))^p\geq
		(\mathcal{W}_{p,b}(m_1,m_2))^p.
	\end{equation} To prove the opposite inequality, we choose  three measures $\widehat{m}_1\in\mathcal{M}(X)$, $\widehat{m}_2\in\mathcal{M}(X)$, $\varpi\in\mathcal{M}(X\times X)$ such that 
	\[\|\widehat{m}_1\|=\|\widehat{m}_2\|,\, \widehat{m}_1\leqq m_1,\, \widehat{m}_2\leqq m_2,\ \ \varpi\in\Pi(\widehat{m}_1,\widehat{m}_2).\] Further, let $\mathscr{r}$ be a mapping that assigns to each element $x\in X$ the remote point $\star$. 
	Since $R\geq \|m_1\|\vee\|m_2\|$, the probability $\pi\in \Pi(R^{-1}\mext{m_1}{R},R^{-1}\mext{m_2}{R})$ defined by the rule:
	\[\pi\triangleq R^{-1}\Bigg[\varpi+(\operatorname{Id},\mathscr{r})\sharp (m_1-\widehat{m}_1)+(\mathscr{r},\operatorname{Id})\sharp (m_2-\widehat{m}_2)+(R-(\|m_1\|\vee\|m_2\|))\delta_{(\star,\star)}\Bigg]\] lies in $\Pi(R^{-1}\mext{m_1}{R},R^{-1}\mext{m_2}{R})$. We have that
	\[\begin{split}
		R \int_{(X\cup\{\star\})\times (X\cup\{\star\})}(&\rho_\star(x_1,x_2))^p\pi(d(x_1,x_2))\\=
		\int_{X\times X}&(\rho_X(x_1,x_2))^p\varpi(x_1,x_2)+b^p\|m_1-\widehat{m}_1\|+b^p\|m_2-\widehat{m}_2\|.
	\end{split}
	\] Hence, \begin{equation*}
		(\widetilde{\mathcal{W}}_{p,b}(m_1,m_2))^p\leq (\mathcal{W}_{1,p}(m_1,m_2))^p.
	\end{equation*} This together with~\eqref{prel:ineq:W_p_b} gives the conclusion of the proposition.
\end{proof}

Now let us estimate the total mass of the measure through its distance to the given 
measure.

\begin{proposition}\label{prop:estimate_mass}
	Let $m_1,m_2\in\mathcal{M}(X)$, $\mathcal{W}_{p,b}(m_1,m_2)\leq c$. Then $\|m_1\|\leq b^{-p}c^p+\|m_2\|$.
\end{proposition}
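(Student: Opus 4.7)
The plan is to unpack the definition of $\mathcal{W}_{p,b}$ and exploit the fact that in any admissible triple $(\widehat m_1,\widehat m_2,\varpi)$ the inner measures have equal total mass.

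Fix $\varepsilon>0$. By the infimum in~\eqref{prel:equality:W} there exist $\widehat m_1\leqq m_1$, $\widehat m_2\leqq m_2$ with $\|\widehat m_1\|=\|\widehat m_2\|$, and $\varpi\in\Pi(\widehat m_1,\widehat m_2)$ such that
\[
b^p\|m_1-\widehat m_1\|+b^p\|m_2-\widehat m_2\|+\int_{X\times X}(\rho_X(x_1,x_2))^p\varpi(d(x_1,x_2))\leq (\mathcal{W}_{p,b}(m_1,m_2))^p+\varepsilon\leq c^p+\varepsilon.
\]
Dropping the two nonnegative summands on the left other than $b^p\|m_1-\widehat m_1\|$ yields $\|m_1-\widehat m_1\|\leq b^{-p}(c^p+\varepsilon)$.

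Next I would use $\widehat m_1\leqq m_1$ (so that $\|m_1\|=\|m_1-\widehat m_1\|+\|\widehat m_1\|$), the mass-equality $\|\widehat m_1\|=\|\widehat m_2\|$, and $\widehat m_2\leqq m_2$ (so that $\|\widehat m_2\|\leq\|m_2\|$) to chain
\[
\|m_1\|=\|m_1-\widehat m_1\|+\|\widehat m_2\|\leq b^{-p}(c^p+\varepsilon)+\|m_2\|.
\]
Letting $\varepsilon\downarrow 0$ gives the desired bound.

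No real obstacle here; the only thing worth being explicit about is that taking $\widehat m_i\leqq m_i$ genuinely guarantees the additivity $\|m_i\|=\|m_i-\widehat m_i\|+\|\widehat m_i\|$, which follows directly from the definition of $\leqq$ applied to $X$ and its complement within itself (or by integrating $\mathbbm{1}_X$ against $m_i-\widehat m_i$ and $\widehat m_i$). Alternatively, one could argue directly via Proposition~\ref{prop:W_p_b} by choosing $R\geq\|m_1\|\vee\|m_2\|$ and comparing masses of the extended probabilities, but the above route is shorter.
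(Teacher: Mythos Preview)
Your proof is correct and follows essentially the same route as the paper's: isolate the term $b^p\|m_1-\widehat m_1\|$ in the defining infimum, then use $\|\widehat m_1\|=\|\widehat m_2\|\leq\|m_2\|$. The only difference is that the paper simply takes a minimizing triple, whereas you work with an $\varepsilon$-approximate triple and pass to the limit; your version is if anything slightly more careful, since attainment of the infimum has not been established at this point.
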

\begin{proof}
	From Definition \ref{def:generalized_wasserstein} and the assumption of the proposition,  we have that 
	\[b^p(\|m_1\|-\|\widehat{m}_1\|)\leq b^p\|m_1-\widehat{m}_1\|\leq c^p,\] where $\hat{m}_1$, $\hat{m}_2$, $\varpi$ is a triple providing the minimum in the right-hand side of~\eqref{prel:equality:W}. Furthermore, 
	\[\|\widehat{m}_1\|=\|\widehat{m}_2\|\leq \|m_2\|.\] This gives the conclusion of the proposition.
\end{proof}

\begin{proposition}\label{prop:complete_compact}
	The space of finite measure on $X$, $\mathcal{M}(X)$, endowed with the distance $\mathcal{W}_{p,b}$ is complete. If, additionally, $X$ is compact, $R>0$, then $\{m\in \mathcal{M}(X):\|m\|\leq R\}$ is also compact.
\end{proposition}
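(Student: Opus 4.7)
\textbf{The plan} is to transfer both assertions to the augmented Polish space $(X\cup\{\star\},\rho_\star)$ via the map $\Phi_R(m)\triangleq R^{-1}\mext{m}{R}$, invoking Proposition~\ref{prop:W_p_b}, which identifies $\mathcal{W}_{p,b}$ with a scaled copy of $W_p$ on $\mathcal{P}(X\cup\{\star\})$. Note that $\rho_\star$ is bounded by $2b$, so $\mathcal{P}^p(X\cup\{\star\})=\mathcal{P}(X\cup\{\star\})$, and the latter is a complete metric space under $W_p$ by standard Wasserstein theory.

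\textbf{Completeness.} Given a Cauchy sequence $\{m_n\}\subset\mathcal{M}(X)$, Proposition~\ref{prop:estimate_mass} yields $\sup_n\|m_n\|<\infty$. I fix some $R$ strictly larger than this supremum and set $\mu_n\triangleq \Phi_R(m_n)\in\mathcal{P}(X\cup\{\star\})$. By Proposition~\ref{prop:W_p_b} the sequence $\{\mu_n\}$ is Cauchy in $W_p$, hence $\mu_n\to\mu$ for some probability $\mu$. I recover a candidate limit by $m(\Upsilon)\triangleq R\mu(\Upsilon)$ for Borel $\Upsilon\subset X$. Since $\{\star\}$ is clopen in $X\cup\{\star\}$, the portmanteau theorem gives $\mu_n(X)\to\mu(X)$, whence $\|m\|=\lim_n\|m_n\|<R$; consequently $\mext{m}{R}$ is defined and a direct check against~\eqref{intro:intro:m_ext} shows $\mext{m}{R}=R\mu$, so that $\Phi_R(m)=\mu$. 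Applying Proposition~\ref{prop:W_p_b} once more converts $W_p(\mu_n,\mu)\to 0$ back into $\mathcal{W}_{p,b}(m_n,m)\to 0$.

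\textbf{Compactness.} When $X$ is compact, so is $X\cup\{\star\}$, and consequently $(\mathcal{P}(X\cup\{\star\}),W_p)$ is compact. Fixing any $R'>R$, the map $\Phi_{R'}$ is an isometric embedding (up to the constant factor $(R')^{1/p}$) of $\{m\in\mathcal{M}(X):\|m\|\leq R\}$ into $\mathcal{P}(X\cup\{\star\})$, with image
\[\{\mu\in\mathcal{P}(X\cup\{\star\}):\mu(\{\star\})\geq 1-R/R'\}.\]
Clopenness of $\{\star\}$ makes the evaluation $\mu\mapsto\mu(\{\star\})$ continuous, so this image is closed inside the compact space $\mathcal{P}(X\cup\{\star\})$ and therefore compact; pulling back by the isometry yields compactness of the original ball.

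\textbf{Main obstacle.} The only subtle step is ensuring that the reconstructed limit $m$ truly satisfies $\|m\|<R$ so that $\mext{m}{R}$ is legitimately defined and coincides with $R\mu$. This rests on the clopenness of $\{\star\}$ in $X\cup\{\star\}$, which upgrades the a priori merely semicontinuous behaviour of the remote mass under weak limits into genuine continuity and thus forces $\mu_n(\{\star\})\to\mu(\{\star\})$.
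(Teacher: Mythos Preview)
Your proof is correct and follows essentially the same route as the paper: transport the problem to $\mathcal{P}(X\cup\{\star\})$ via $m\mapsto R^{-1}\mext{m}{R}$, invoke Proposition~\ref{prop:W_p_b}, and use completeness/compactness of the Wasserstein space on the augmented (bounded, Polish, resp.\ compact) space. Your treatment is in fact more careful than the paper's on two points: you verify explicitly that the reconstructed limit $m$ satisfies $\|m\|<R$ (via the clopenness of $\{\star\}$) so that $\mext{m}{R}=R\mu$, and for compactness you pass to $R'>R$ to avoid the boundary case $\|m\|=R$, whereas the paper simply asserts that $m\mapsto R^{-1}\mext{m}{R}$ is an isomorphism onto all of $\mathcal{P}^p(X\cup\{\star\})$.
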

\begin{proof}
	To prove the first statement of the proposition, consider a Cauchy sequence of measures $\{m_i\}_{i=1}^\infty$. From Proposition~\ref{prop:estimate_mass}, it follows that 
	$\|m_i\|\leq R$ for some positive constant $R$. Thus,  we have that the sequence of probabilities on $X\cup\{\star\}$ $\{R^{-1}\mext{m_i}{R}\}_{i=1}^\infty$ is Cauchy in $W_p$. Therefore, $\{R^{-1}\mext{m_i}{R}\}_{i=1}^\infty$ converges to some probability $\widetilde{m}\in\mathcal{P}^p(X\cup\{\star\})$. Letting $m$ be a restriction of $R\widetilde{m}$ on $X$, we conclude that $\{m_i\}_{i=1}^\infty$ converges to $m$ in $\mathcal{W}_{p,b}$.
	
	To prove the second statement of the proposition, it suffices to notice that the set of probabilities on $X\cup\{\star\}$ is compact, while the mapping $ m\mapsto R^{-1}\mext{m}{R}$ is a isomorphism between the sets $\{m\in \mathcal{M}(X):\|m\|\leq R\}$ and $\mathcal{P}^p(X\cup\{\star\})$.
\end{proof}

Below, we restrict our attention to the case when $p=1$. This metric coincides up to multiplicative constant  with one used in~\cite{Piccoli2014}.

\subsection{Phase space and space of weighted curves}

Let $\mathcal{M}^1(\rd)$ be a set of measures $m$ on $\rd$ such that $\int_{\rd}\|x\|m(dx)<\infty$. Additionally, denote by $\mathcal{D}(\rd)$ the set of continuous functions $\phi$ with sublinear growth.

If $T>0$, we denote the set of continuous functions $\gamma:[0,T]\rightarrow \rd\times [0,+\infty)$ by $\Gamma_T$. An element of $\Gamma_T$ will be interpret as a  curve with time-varying weight. Indeed, if $\gamma(\cdot)=(x(\cdot),w(\cdot))\in \Gamma_T$, $t\in [0,T]$, we regard $w(t)$ as a relative weight of particles at the point $x=x(t)$ at time $t$. If $C>0$ then, we denote by $\Gamma^C_T$ the set curves with  weights bounded by the constant $C$, i.e., 
\[\Gamma_T^C\triangleq \big\{\gamma(\cdot)=(x(\cdot),w(\cdot))\in \Gamma_T:\, w(t)\in  [0,C]\text{ for each }t\in [0,T]\big\}.\]

Below, we will consider the probabilities on $\Gamma_T^C$ with a finite first moment. If $\eta\in \mathcal{P}^1(\Gamma_T^C)$, $t\in [0,T]$, then we denote by $\taverage{\eta}{t}$ the measure on $\rd$ defined by the rule: for $\phi\in C_b(\rd)$,
\begin{equation}\label{prel:intro:taverage}\int_{\mathcal{K}}\phi(x)\taverage{\eta}{t}(dx)\triangleq \int_{\Gamma}\phi(x(t))w(t)\eta(d(x(\cdot),w(\cdot))).\end{equation} The measure $\taverage{\eta}{t}$ gives the distribution of the masses at time $t$ on the phase space in the case when the particles and their weights are distributed according to $\eta$. 

Notice that the measure $\taverage{\eta}{t}\in\mathcal{M}^1(\rd)$. Furthermore, equality~\eqref{prel:intro:taverage} holds true for each $\phi\in\mathcal{D}(\rd)$.

Now let us show that the mapping $\mathcal{P}^1(\Gamma_T^C)\ni \eta\mapsto \taverage{\eta}{t}$ is Lipschitz continuous.

\begin{proposition}\label{prop:eta_W} If $C>0$, $\eta_1,\eta_2\in\mathcal{P}^1(\Gamma_T^C)$, $t\in [0,T]$, then 
	\[\mathcal{W}_{1,b}(\taverage{\eta_1}{t},\taverage{\eta_2}{t})\leq C_0W_1(\eta_1,\eta_2),\] where $C_0=C\vee b^{-1}$.
\end{proposition}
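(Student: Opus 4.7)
The plan is to lift an optimal $W_1$-coupling between $\eta_1$ and $\eta_2$ to a competitor in the variational definition of $\mathcal{W}_{1,b}$ from Definition~\ref{def:generalized_wasserstein}, taking the minimum of the two weights at time $t$ to produce a common-mass transport plan and paying the PRW penalty $b$ for the mismatch in weights.

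More precisely, let $\alpha \in \Pi(\eta_1,\eta_2)$ be an optimal plan realizing $W_1(\eta_1,\eta_2)$. For a pair of curves $\gamma_i(\cdot)=(x_i(\cdot),w_i(\cdot))$ sampled from $\alpha$, at the fixed time $t$ we have two Dirac-like contributions $w_i(t)\delta_{x_i(t)}$ to $\taverage{\eta_i}{t}$. I define $\varpi \in \mathcal{M}(\rd\times\rd)$ by
\[
\int_{\rd\times\rd}\phi(y_1,y_2)\varpi(d(y_1,y_2))\triangleq \int_{\Gamma_T^C\times\Gamma_T^C}\phi(x_1(t),x_2(t))\,\bigl(w_1(t)\wedge w_2(t)\bigr)\,\alpha(d(\gamma_1,\gamma_2)),
\]
and set $\widehat{m}_i \triangleq \operatorname{p}^i\sharp\varpi$. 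Testing against $\phi\in C_b(\rd)$, $\phi\geq 0$, and using $w_1\wedge w_2\leq w_i$ together with $\operatorname{p}^i\sharp\alpha=\eta_i$ and the definition~\eqref{prel:intro:taverage} of $\taverage{\eta_i}{t}$, one checks $\widehat{m}_i\leqq\taverage{\eta_i}{t}$; furthermore $\|\widehat{m}_1\|=\|\widehat{m}_2\|=\int(w_1(t)\wedge w_2(t))\,d\alpha$, so $(\widehat{m}_1,\widehat{m}_2,\varpi)$ is an admissible triple in~\eqref{prel:equality:W}.

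Now I evaluate the three terms in the PRW functional. From $w_1-(w_1\wedge w_2)=(w_1-w_2)^+$ we obtain
\[
\|\taverage{\eta_1}{t}-\widehat{m}_1\|+\|\taverage{\eta_2}{t}-\widehat{m}_2\|=\int|w_1(t)-w_2(t)|\,\alpha(d(\gamma_1,\gamma_2)),
\]
while the transport cost is $\int\|x_1(t)-x_2(t)\|(w_1(t)\wedge w_2(t))\,d\alpha$. Bounding $w_1(t)\wedge w_2(t)\leq C$ in the transport integral and $\|x_i(t)\|$-differences and weight-differences at time $t$ by the uniform distance on $\Gamma_T^C$, the sum is controlled by
\[
\int_{\Gamma_T^C\times\Gamma_T^C}\Bigl[b\,|w_1(t)-w_2(t)|+C\,\|x_1(t)-x_2(t)\|\Bigr]\alpha(d(\gamma_1,\gamma_2))\leq C_0\int d_{\Gamma_T^C}(\gamma_1,\gamma_2)\,\alpha(d(\gamma_1,\gamma_2)),
\]
and since $\alpha$ is optimal, the right-hand side equals $C_0\,W_1(\eta_1,\eta_2)$.

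The main technical point is the choice of coupling: splitting the mass into the common part $w_1\wedge w_2$ (which can be \emph{transported} at cost $\|x_1-x_2\|$) and the excess $|w_1-w_2|$ (which is \emph{created/destroyed} at cost $b$ per unit) is the only way to reduce an optimal coupling of curves with \emph{different} weights to an admissible triple in Definition~\ref{def:generalized_wasserstein}. The rest is bookkeeping: verifying $\widehat{m}_i\leqq\taverage{\eta_i}{t}$, checking the mass equality, and performing the elementary estimate trading the factor $b$ against $C$ in the two summands so that a single multiplicative constant $C_0$ controls the whole integrand pointwise by the metric on $\Gamma_T^C$.
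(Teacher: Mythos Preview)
Your proof is correct and follows essentially the same route as the paper: take an optimal $W_1$-plan between $\eta_1$ and $\eta_2$, build the candidate $\varpi$ by weighting with $w_1(t)\wedge w_2(t)$, set $\widehat m_i=\operatorname{p}^i\sharp\varpi$, and then bound the transport term using $w_1\wedge w_2\le C$ while the mass-defect terms collapse to $\int|w_1(t)-w_2(t)|\,d\alpha$. The paper's argument is identical in structure; your write-up is slightly more explicit about verifying $\widehat m_i\leqq\taverage{\eta_i}{t}$, but there is no substantive difference.
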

\begin{proof}
	Let $m_1\triangleq \taverage{\eta_1}{t}$, $m_2\triangleq \taverage{\eta_2}{t}$, and let $\pi\in \Pi(\eta_1,\eta_2)$ be an optimal plan between $\eta_1$ and $\eta_2$.  We define the measure $\varpi$ by the following rule: for $\phi\in C_b(\rd\times \rd)$,
	\begin{equation}\label{prel:intro:varpi}\begin{split}\int_{\rd\times\rd}&\phi(x_1,x_2)\varpi(d(x_1,x_2))\\\triangleq &\int_{\Gamma_T^C\times\Gamma_T^C}\phi(x_1(t),x_2(t))(w_1(t)\wedge w_2(t))\pi(d((x_1(\cdot),w_1(\cdot)),(x_2(\cdot),w_2(\cdot)))).\end{split}\end{equation} Further, set $\widehat{m}_1\triangleq \operatorname{p}^1\sharp \varpi$, $\widehat{m}_2\triangleq \operatorname{p}^2\sharp \varpi$. Obviously, $\varpi\in \Pi(\widehat{m}_1,\widehat{m}_2)$. Further, for each $\phi\in C_b(\rd)$,
	\begin{equation*}\label{prel:intro:m_1_tilde}\begin{split}\int_{\rd}\phi&(x)(m_1-\widehat{m}_1)(dx) \\&=\int_{\Gamma_T^C\times\Gamma_T^C}\phi(x_1(t))(w_1(t)-(w_1(t)\wedge w_2(t)))\pi(d((x_1(\cdot),w_1(\cdot)),(x_2(\cdot),w_2(\cdot)))),\end{split}\end{equation*} 
	while 
	\begin{equation}\label{prel:intro:m_2_tilde}\begin{split}
			\int_{\rd}\phi(x)(m_2-&\widehat{m}_2)(dx)\\=&\int_{\Gamma_T^C\times\Gamma_T^C}\phi(x_2(t)) (w_2(t)-(w_1(t)\wedge w_2(t)))\\&{}\hspace{60pt}\pi(d((x_1(\cdot),w_1(\cdot)),(x_2(\cdot),w_2(\cdot)))).\end{split}\end{equation} 
	
	Due to the fact that $\eta_1,\eta_2\in\mathcal{P}^1(\Gamma_T^C)$, one can extend equalities~\eqref{prel:intro:varpi}--\eqref{prel:intro:m_2_tilde} to the set of functions $\phi$ with sublinear growth. 
	Therefore,
	\begin{equation}\label{prel:ineq:varpi_image}
		\begin{split}
			\int_{\rd\times\rd}&\|x_1-x_2\|\varpi(d(x_1,x_2))\\ =&\int_{\Gamma_T^C\times\Gamma_T^C}\|x_1(t)-x_2(t)\|(w_1(t)\wedge w_2(t))\pi(d((x_1(\cdot),w_1(\cdot)),(x_2(\cdot),w_2(\cdot)))) \\ \leq& 
			C \int_{\Gamma_T^C\times\Gamma_T^C}\|x_1(t)-x_2(t)\| \pi(d((x_1(\cdot),w_1(\cdot)),(x_2(\cdot),w_2(\cdot)))).
		\end{split}
	\end{equation} Furthermore, using the Jensen's inequality, we deduce that
	\begin{equation*}\label{prel:ineq:m_1_image}
		\begin{split}
			\|m_1&-\widehat{m}_1\|+\|m_2-\widehat{m}_2\|\\
			&=\int_{\Gamma_T^C\times\Gamma_T^C} ((w_1(t)\vee w_2(t))-(w_1(t)\wedge w_2(t)))\pi(d((x_1(\cdot),w_1(\cdot)),(x_2(\cdot),w_2(\cdot))))\\&= 
			\int_{\Gamma_T^C\times\Gamma_T^C} |w_1(t)-w_2(t)|\pi(d((x_1(\cdot),w_1(\cdot)),(x_2(\cdot),w_2(\cdot)))).
		\end{split}	
	\end{equation*} Combining this with~\eqref{prel:ineq:varpi_image}, we obtain the statement of the proposition.
\end{proof}

\begin{proposition}\label{prop:evaluation_lipschitz} Let ${c}$ be a constant such that, for $\eta$-a.e. curves $(x(\cdot),w(\cdot))\in\Gamma_T^C$, one has that 
	\begin{equation}\label{prel:Lip_s_r}\|x(s)-x(r)\|\leq {c}|r-s|,\, |w(s)-w(r)|\leq {c}|r-s|.\end{equation} Then,
	\[\mathcal{W}_{1,b}(\taverage{\eta}{s},\taverage{\eta}{r})\leq {c}(C+b)|r-s|.\] 
\end{proposition}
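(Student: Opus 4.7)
My plan is to mimic the construction used in the proof of Proposition \ref{prop:eta_W}, but now with the ``diagonal'' coupling that sends a single curve to itself, evaluated at the two different times $s$ and $r$. The triple $(\widehat{m}_1,\widehat{m}_2,\varpi)$ in the definition of $\mathcal{W}_{1,b}$ will then be determined by a single curve and the overlap weight $w(s)\wedge w(r)$, which is the natural way to transport common mass between the two snapshots and discard the rest via the remote point.

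Concretely, I would define $\varpi\in\mathcal{M}(\rd\times\rd)$ by the rule: for $\phi\in C_b(\rd\times\rd)$,
\[
\int_{\rd\times\rd}\phi(x_1,x_2)\,\varpi(d(x_1,x_2))\triangleq\int_{\Gamma_T^C}\phi(x(s),x(r))(w(s)\wedge w(r))\,\eta(d(x(\cdot),w(\cdot))),
\]
and set $\widehat{m}_1\triangleq\operatorname{p}^1\sharp\varpi$, $\widehat{m}_2\triangleq\operatorname{p}^2\sharp\varpi$. The inequalities $\widehat{m}_1\leqq\taverage{\eta}{s}$ and $\widehat{m}_2\leqq\taverage{\eta}{r}$ follow directly from the definition of $\taverage{\cdot}{\cdot}$ together with $w(s)\wedge w(r)\leq w(s)$ and $w(s)\wedge w(r)\leq w(r)$; and of course $\varpi\in\Pi(\widehat{m}_1,\widehat{m}_2)$ by construction. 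So $(\widehat{m}_1,\widehat{m}_2,\varpi)$ is admissible in formula~\eqref{prel:equality:W}.

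It then remains to estimate the three quantities in~\eqref{prel:equality:W}. For the transport term, since $w(s)\wedge w(r)\leq C$ $\eta$-a.e.\ and $\eta$ is a probability,
\[
\int_{\rd\times\rd}\|x_1-x_2\|\varpi(d(x_1,x_2))=\int_{\Gamma_T^C}\|x(s)-x(r)\|(w(s)\wedge w(r))\eta(d\gamma)\leq cC|r-s|,
\]
using the Lipschitz hypothesis~\eqref{prel:Lip_s_r} on $x(\cdot)$. For the two mass-defect terms, the same computation as at the end of the proof of Proposition~\ref{prop:eta_W} yields
\[
\|\taverage{\eta}{s}-\widehat{m}_1\|+\|\taverage{\eta}{r}-\widehat{m}_2\|=\int_{\Gamma_T^C}|w(s)-w(r)|\eta(d\gamma)\leq c|r-s|,
\]
using the Lipschitz hypothesis on $w(\cdot)$. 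Adding $b$ times this inequality to the transport estimate gives $\mathcal{W}_{1,b}(\taverage{\eta}{s},\taverage{\eta}{r})\leq c(C+b)|r-s|$, as desired.

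There is no real obstacle: once the diagonal coupling $\varpi$ is written down, both Lipschitz hypotheses are fed into the natural terms of~\eqref{prel:equality:W}. The only thing to be careful about is that the extension of the defining equalities from $\phi\in C_b(\rd)$ (or $C_b(\rd\times\rd)$) to $\phi(x_1,x_2)=\|x_1-x_2\|$ and $\phi(x)=1$ is legitimate; this is handled as in Proposition~\ref{prop:eta_W} via the sublinear-growth extension, using $\eta\in\mathcal{P}^1(\Gamma_T^C)$.
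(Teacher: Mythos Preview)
Your proof is correct and follows essentially the same route as the paper: the same diagonal coupling $\varpi$ weighted by $w(s)\wedge w(r)$, the same marginals $\widehat{m}_1,\widehat{m}_2$, and the same two estimates combining to $c(C+b)|r-s|$. Your write-up is in fact slightly more explicit about admissibility and the sublinear-growth extension than the paper's version.
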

\begin{proof}
	Without loss of generality, we assume that $s<r$. For each curve $(x(\cdot),w(\cdot))\in\Gamma_T$ satisfying~\eqref{prel:Lip_s_r}, we have that 
	\[\|x(s)(w(s)\wedge w(r))-x(r)(w(s)\wedge w(r))\|\leq C{c}(r-s).\] Furthermore,
	\[|w(s)-w(r)|\leq {c}(r-s).\] Let $\varpi$ be defined by the rule: for each $\phi\in C_b(\rd\times\rd)$,
	\[\int_{\rd\times\rd}\phi(x_1,x_2)\varpi(d(x_1,x_2))=\int_{\Gamma_T^C}\phi(x(s),x(r))(w(s)\wedge w(r))\eta(d(x(\cdot),w(\cdot))),\]
	\[\widehat{m}_s\triangleq \operatorname{p}^1\sharp\varpi,\ \ \widehat{m}_r\triangleq \operatorname{p}^2\sharp\varpi. \] Therefore, we have that 
	\[\begin{split}
		\mathcal{W}_{1,b}(&\taverage{\eta}{s},\taverage{\eta}{r})\\&\leq \int_{\rd\times \rd}\|x_1-x_2\|\varpi(d(x_1,x_2))+b\big\|\taverage{\eta}{s}-\widehat{m}_s\big\| +b\big\|\taverage{\eta}{r}-\widehat{m}_r\big\|\\ &=
		\int_{\Gamma_T^C}\big(\|x(s)-x(r)\|(w(s)\wedge w(r))\\&{}\hspace{70pt}+b((w(s)\vee w(r))-(w(s)\wedge w(r)))\big)\eta(d(x(\cdot),w(\cdot))\\ &\leq {c}(C+b)(r-s).
	\end{split}\]
\end{proof}

\section{Distribution on the space of varying weighted curves}\label{sect:weights}
In this section, we examine the equilibrium distribution of  curves with time-varying weights satisfying the dynamics 
\begin{equation}\label{distr:eq:x_w}
	\begin{split}
		&\frac{d}{dt}x(t)=f(t,x(t),\taverage{\eta}{t}),\\ 
		&\frac{d}{dt}w(t)=g(t,x(t),\taverage{\eta}{t})w(t).
	\end{split}
\end{equation}

The initial distribution is assumed to be the same as for balance equation~\eqref{intro:eq:nonlocal_continuity} and equal to $m_0$.

\begin{definition}\label{def:distribution} Let $C>0$. We say that $\eta\in \mathcal{P}^1(\Gamma_T^C)$ is an equilibrium distribution of the weighted trajectories provided that 
	\begin{itemize}	
		\item	 $\taverage{\eta}{0}=m_0$;
		\item  $\eta$-a.e. curves $(x(\cdot),w(\cdot))$ satisfy~\eqref{distr:eq:x_w} and the initial condition $w(0)=\|m_0\|$.
	\end{itemize}
\end{definition}

In the following, we  impose the following conditions on $f$ and $g$:
\begin{enumerate}[label=(A\arabic*), series=main_cond]
	\item \label{cond:main:firts_moment} $m_0\in \mathcal{M}^1(\rd)$;
	\item\label{cond:main:continuity} $f$ and $g$ are continuous;
	\item\label{cond:main:Lip} $f$ and $g$ are Lipschitz continuous w.r.t. $x$ and $m$ with the Lipschitz constants $C_{Lf}$ and $C_{Lg}$ respectively;
	\item\label{cond:main:boundness} there exists a constants $C_f$ and $C_g$ such that, for every $t\in [0,T]$, $x\in\rd$, $m\in\mathcal{M}(\rd)$,  \[\|f(t,x,m)\|\leq C_f,\ \ |g(t,x,m)|\leq C_g.\]
\end{enumerate} 
\begin{theorem}\label{th:existence}
	For each $T>0$, $C>C_1\triangleq \|m_0\|e^{TC_g}$, there exists a unique equilibrium distribution of weighted trajectories lying in  $\mathcal{P}^1(\Gamma_T^C)$. 
\end{theorem}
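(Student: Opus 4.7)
My plan is to recast the problem as a fixed point equation on $\mathcal{P}^1(\Gamma_T^C)$. Given $\mu\in\mathcal{P}^1(\Gamma_T^C)$, form the curve of measures $m_\mu(t)\triangleq\taverage{\mu}{t}\in\mathcal{M}^1(\rd)$. With this frozen reference, solve, for each $x_0\in\rd$, the non-autonomous ODE system
\begin{equation*}
\dot x(t)=f(t,x(t),m_\mu(t)),\qquad \dot w(t)=g(t,x(t),m_\mu(t))w(t),
\end{equation*}
with $x(0)=x_0$, $w(0)=\|m_0\|$. Conditions \ref{cond:main:continuity}--\ref{cond:main:boundness} provide unique global solutions that depend continuously on $x_0$; denote the resulting map $x_0\mapsto(x^{x_0}(\cdot),w^{x_0}(\cdot)):\rd\to\Gamma_T$ by $\mathcal{T}_\mu$, and set
\begin{equation*}
\Phi(\mu)\triangleq\mathcal{T}_\mu\sharp\bigl(\|m_0\|^{-1}m_0\bigr).
\end{equation*}
The explicit formula $w^{x_0}(t)=\|m_0\|\exp\bigl(\int_0^t g(s,x^{x_0}(s),m_\mu(s))ds\bigr)$ together with \ref{cond:main:boundness} gives $w^{x_0}(t)\in[0,C_1]\subset[0,C]$, so $\Phi$ maps into $\mathcal{P}^1(\Gamma_T^C)$ precisely because $C>C_1$. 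The identity $\taverage{\Phi(\mu)}{0}=m_0$ is immediate from $w(0)=\|m_0\|$, so a fixed point of $\Phi$ is exactly an equilibrium distribution in the sense of Definition~\ref{def:distribution}, and conversely.

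The analytic core is a two-component Grönwall estimate. For $\mu_1,\mu_2\in\mathcal{P}^1(\Gamma_T^C)$ and a common initial point $x_0$, compare the characteristic curves driven by $m_i(\cdot)=\taverage{\mu_i}{\cdot}$. Condition \ref{cond:main:Lip}, applied first to $f$ and then to $gw$ with the a priori bound $w_i(s)\leq C_1$, yields after a routine argument a constant $K$ depending only on $T,C_f,C_g,C_{Lf},C_{Lg},C_1$ such that, uniformly in $x_0$,
\begin{equation*}
\sup_{t\in[0,\tau]}\bigl(\|x_1(t)-x_2(t)\|+|w_1(t)-w_2(t)|\bigr)\leq K\int_0^\tau\mathcal{W}_{1,b}(m_1(s),m_2(s))\,ds
\end{equation*}
for every $\tau\in[0,T]$. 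Coupling $\Phi(\mu_1),\Phi(\mu_2)$ through the shared initial point $x_0$ and using Proposition~\ref{prop:eta_W} to control the right-hand side by the Wasserstein-1 distance between $\mu_1$ and $\mu_2$ (computed with the sup-metric on curves restricted to $[0,\tau]$) shows that $\Phi$ is a strict contraction for $\tau$ small enough.

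To upgrade to the full interval I would either (a) iterate on consecutive subintervals $[k\tau,(k+1)\tau]$ using the distribution of curves obtained on the previous step as the new initial data and gluing by continuity, or (b) endow $\mathcal{P}^1(\Gamma_T^C)$ with the weighted Wasserstein metric induced by the norm $\sup_{t\in[0,T]}e^{-\lambda t}\bigl(\|x_1(t)-x_2(t)\|+|w_1(t)-w_2(t)|\bigr)$ on $\Gamma_T^C$ and choose $\lambda>K$ so that $\Phi$ becomes a single contraction on the whole interval. Either route delivers the unique fixed point by Banach's theorem, hence the unique equilibrium distribution. The main obstacle I anticipate is the careful verification in (a) that the restart preserves the push-forward structure and that the Grönwall constant $K$ can be chosen uniformly in the starting time; route (b) bypasses this technicality at the cost of a slightly less geometric metric, and is the cleaner option in my view.
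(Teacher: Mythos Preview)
Your proposal is correct, and route (b) in particular goes through cleanly: the weighted sup-norm on $\Gamma_T^C$ is equivalent to the standard one, so $\mathcal{P}^1(\Gamma_T^C)$ remains complete under the induced $W_1^\lambda$, and the Gr\"onwall estimate combined with (a time-$t$ version of) Proposition~\ref{prop:eta_W} gives the contraction constant $KC_0/\lambda<1$ for $\lambda$ large. Route (a) is workable but, as you note, the restart at $k\tau$ forces non-constant initial weights $w(k\tau)$, which breaks the simple push-forward structure of $\Phi$ and requires a slightly different operator on each subinterval; route (b) is indeed the cleaner choice.

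The paper's own proof takes a genuinely different route. Rather than a contraction argument, it applies the \emph{Schauder} fixed point theorem: it restricts $\Phi$ to a convex set $\mathscr{P}\subset\mathcal{P}^1(\Gamma_T^C)$ of probabilities supported on uniformly Lipschitz weighted curves with prescribed initial law, shows $\mathscr{P}$ is compact in $W_1$ via Arzel\`a--Ascoli and uniform integrability, and verifies that $\Phi$ is continuous (not contractive) on $\mathscr{P}$. This yields existence; uniqueness is then proved by a separate Gr\"onwall argument on $\varrho(t)=\mathcal{W}_{1,b}(\taverage{\eta_1}{t},\taverage{\eta_2}{t})$. Your Banach approach buys existence and uniqueness in one stroke and avoids the compactness verification; the paper's Schauder approach avoids the weighted-metric device and keeps the standard $W_1$ throughout, at the cost of an extra compactness step and a decoupled uniqueness proof.
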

\begin{proof}
	First, let us define 
	\[\widetilde{\Gamma}_T\triangleq \Big\{(x(\cdot),w(\cdot))\in \Gamma_T^{C_1}:x(\cdot)\in \operatorname{Lip}_{C_f}([0,T];\rd),\, w(\cdot)\in \operatorname{Lip}_{C_gC_1}([0,T];\mathbb{R})\Big\}.\] 
	
	Furthermore, we denote 
	\[\mathscr{P}\triangleq \Big\{\eta\in \mathcal{P}^1(\widetilde{\Gamma}_T):\ \ e_0\sharp\eta= (\operatorname{Id},\|m_0\|)(\|m_0\|^{-1}m)\Big\}.\] Hereinafter, $e_t$ stands for the evaluation operator form $\Gamma_T$ to $\rd\times [0,+\infty)$ that assigns to each $\gamma\in\Gamma_T$ its value at time $t$, i.e., $e_t(\gamma)=\gamma(t)$. 
	
	The definition of the set $\mathscr{P}$ gives, in particular, that, for $\eta\in\mathscr{P}$ and $\eta$-a.e. $\gamma\in\widetilde{\Gamma}_T$, one has 
	\[\|\gamma\|\leq TC_f+C_1+\|\gamma(0)\|.\] Therefore, probabilities from $\mathscr{P}$ are uniformly integrable. The assumption $\mathscr{P}\subset \mathcal{P}^1(\widetilde{\Gamma}_T)$, definition of $\widetilde{\Gamma}_T$ and the Arzel\`a-Ascoli theorem give that $\mathscr{P}$ is tight. Therefore, we have that $\mathscr{P}$ is compact in the topology produced by the metric $W_1$ \cite[Proposition 7.1.5]{Ambrosio}.

	If $\eta\in \mathscr{P}$, then denote by $\operatorname{traj}_{\eta}(x_0)$ the operator that assigns to $x_0\in \rd$, the pair of functions $(x(\cdot),w(\cdot))$ that satisfies
	\begin{equation*}\label{distr:eq:x_w_0_m_0}
		\begin{split}
			&\frac{d}{dt}x(t)=f(t,x(t),\taverage{\eta}{t}),\ \ x(0)=x_0;\\
			&\frac{d}{dt}w(t)=g(t,x(t),\taverage{\eta}{t})w(t),\ \ w(0)=\|m_0\|.
		\end{split}
	\end{equation*} Further, set
	\begin{equation}\label{distr:intro:Phi}
		\Phi(\eta)\triangleq \operatorname{traj}_{\eta}\sharp (\|m_0\|^{-1}m_0).
	\end{equation} By construction, $\Phi(\eta)\in \mathcal{P}(\Gamma_T)$. Further,
	\begin{equation*}\label{distr:equality:m_0}
		\taverage{\Phi(\eta)}{0}=m_0.
	\end{equation*} 
	
	Notice that $\eta$ is a equilibrium distribution of curves with time-varying weights if and only if
	\[\Phi(\eta)=\eta.\]

	Let us show that the operator $\Phi$ defined by~\eqref{distr:intro:Phi} maps $\mathscr{P}$ into itself. 
	To this end, we choose $(x(\cdot),w(\cdot))=\operatorname{traj}_{\eta}(x_0)$. For each $s,r\in [0,T]$, one has that
	\[ 
	\|x(r)-x(s)\|\leq \int_s^r \|f(t,x(t),\taverage{\eta}{t})\|dt.
	\] This and condition~\ref{cond:main:boundness} yield that $x(\cdot)\in \operatorname{Lip}_{C_f}([0,T];\rd)$.
	Further, since $|g(t,x(t),\taverage{\eta}{t})|\leq C_g$, we, using the Gronwall's inequality, conclude that 
	$|w(t)|\leq C_1$. Additionally, for $s,r\in [0,T]$,
	\[|w(r)-w(s)|\leq \int_s^rw(t)|g(t,x(t),\taverage{\eta}{t})|dt\leq C_gC_1|r-s|.\] Thus, $w(\cdot)\in \operatorname{Lip}_{C_gC_1}([0,T],\mathbb{R})$. Therefore, we have that $\operatorname{traj}_{\eta}(x_0)\in\widetilde{\Gamma}_T$, while by construction
	\[e_0\sharp(\Phi(\eta))= (\operatorname{Id},\|m_0\|)(\|m_0\|^{-1}m).\] This means that $\Phi(\eta)\in \mathscr{P}$.

	Now we prove that $\Phi$ is a continuous mapping on $\mathscr{P}$. Let $\eta_1,\eta_2\in\mathcal{P}(\widetilde{\Gamma}_T)$ and let $(x_1(\cdot),w_1(\cdot))=\operatorname{traj}_{\eta_1}(x_0)$, $(x_2(\cdot),w_2(\cdot))=\operatorname{traj}_{\eta_2}(x_0)$. We have that 
	\[\|x_1(t)-x_2(t)\|\leq \int_{0}^t\|f(t',x_1(t'),\taverage{\eta_1}{t'})-f(t',x_2(t'),\taverage{\eta_2}{t'})\|.\]
	
	Using the Lipschitz continuity and Proposition~\ref{prop:eta_W}, we obtain the estimate
	\[\|x_1(t)-x_2(t)\|\leq \int_{0}^t C_{Lf}\|x_1(t')-x_2(t')\|dt+C_2 W_1(\eta_1,\eta_2).\] Here $C_{Lf}$ is a Lipschitz constant for the function $f$, while $C_2=C_{Lf}C_1$. By the Gronwall's inequality, we conclude that 
	\begin{equation}\label{distr:ineq:x_eta}
		\|x_1(\cdot)-x_2(\cdot)\|\leq C_3 W_1(\eta_1,\eta_2).
	\end{equation} Here $C_3$ is a constant. Furthermore, 
	\[\begin{split}
		|w_1(t)-w_2(t)|&\leq \int_0^t |w_1(t')g(t',x_1(t'),\taverage{\eta_1}{t'})-w_2(t')g(t',x_2(t'),\taverage{\eta_1}{t'})| \\ &
		\leq \int_0^t\big[|w_1(t')-w_2(t')|\cdot|g(t',x_1(t'),\taverage{\eta_1}{t'})|\\&{}\hspace{50pt}+|w_2(t)|\cdot |g(t',x_1(t'),\taverage{\eta_1}{t'})-g(t',x_2(t'),\taverage{\eta_2}{t'})|\big]dt'.
	\end{split}\] Using the facts that $g$ is bounded by $C_g$ and Lipschitz continuous, the inequality $|w_2(t')|\leq C_1$ and estimate~\eqref{distr:ineq:x_eta}, we have that 
	\[|w_1(t)-w_2(t)|\leq C_g\int_0^t |w_1(t')-w_2(t')|dt'+C_4 W_1(\eta_1,\eta_2).\] This and the Gronwall's inequality give the following estimate for some constant $C_5$:
	\begin{equation}\label{distr:ineq:w_eta}
		|w_1(\cdot)-w_2(\cdot)|\leq C_5 W_1(\eta_1,\eta_2).
	\end{equation}
	Now, to estimate $W_1(\Phi(\eta_1),\Phi(\eta_2))$, we consider the  plan between $\Phi(\eta_1)$ and $\Phi(\eta_2)$ defined by the rule: 
	\[\tilde{\pi}\triangleq (\operatorname{traj}_{\eta_1},\operatorname{traj}_{\eta_2})\sharp\pi,\] where $\pi$ is an optimal plan between $\eta_1$ and $\eta_2$. From~\eqref{distr:ineq:x_eta} and~\eqref{distr:ineq:w_eta}, we conclude that 
	\[W_1(\Phi(\eta_1),\Phi(\eta_2))\leq C_6 W_1(\eta_1,\eta_2),\] where $C_6$ is a constant determined only on  $f$, $g$, $\|m_0\|$ and $T$. This gives the continuity of $\Phi$. Therefore, by the Schauder fixed point theorem (see \cite[17.56 Corollary]{Infinite_dimensional}), the mapping~$\Phi:\mathscr{P}\rightarrow\mathscr{P}$ admits a fixed point $\eta^*$. By construction of  $\Phi$, we have that $\eta^*$ is an equilibrium distribution of  curves with weights. Furthermore, since $\eta^*\in \mathscr{P}\subset \mathcal{P}^1(\widetilde{\Gamma}_T)$, the following estimate holds true: \[\|\taverage{\eta^*}{t}\|\leq C_1=\|m_0\|e^{C_gt}.\]
	
	Now, let us show that the equilibrium distribution of the weighted curves is unique. Let $\eta_1,\eta_2$ be two such distributions. This means that 
	\[\eta_1=\Phi(\eta_1),\ \ \eta_2=\Phi(\eta_2).\]
	For each $t$, set  
	\[\varrho(t)\triangleq  \mathcal{W}_{1,b}(\taverage{\eta_1}{t},\taverage{\eta_2}{t}).\] 
	
	Now let  $(x_1(\cdot,x_0),w_1(\cdot,x_0))=\operatorname{traj}_{\eta_1}(x_0)$, $(x_2(\cdot,x_0),w_2(\cdot,x_0))=\operatorname{traj}_{\eta_2}(x_0)$. 
	We have that \[\begin{split}\|x_1(t,&x_0)-x_2(t,x_0)\|\\&\leq \|x_1(0)+x_2(0)\|+ \int_{0}^t(C_{Lf}\|x_1(t',x_0)-x_2(t',x_0)\| +C_2\varrho(t'))dt'.\end{split}\] This and the Gronwall's inequality imply that 
	\begin{equation}\label{distr:ineq:x_t_varrho}
		\|x_1(t,x_0)-x_2(t,x_0)\|\leq C_7\int_0^t \varrho(t')dt'.
	\end{equation}
	Using the same arguments, one can show that 
	\begin{equation}\label{distr:ineq:w_t_varrho} |w_1(t,x_0)-w_2(t,x_0)|\leq C_8\int_0^t \varrho(t')dt'.\end{equation}
	
	Now, notice that
	\[\begin{split}
		\mathcal{W}_{1,b}(\taverage{\eta_1}{t},\taverage{\eta_2}{t})\leq \int_{\rd} (\|&x_1(t,x_0)-x_2(t,x_0)\|\\&+b| w_1(t,x_0)-w_2(t,x_0)|) \|m_0\|^{-1}m_0(dx_0).\end{split}\]  From this,~\eqref{distr:ineq:x_t_varrho},~\eqref{distr:ineq:w_t_varrho}, we conclude that 
	\[
	\varrho(t)=\mathcal{W}_{1,b}(\taverage{\eta_1}{t},\taverage{\eta_2}{t})\leq C_9\int_0^t \varrho(t')dt'.\] Using the Gronwall's inequality once more, we deduce that 
	\[\varrho(t)=\mathcal{W}_{1,b}(\taverage{\eta_1}{t},\taverage{\eta_2}{t})\equiv 0.\] Plugging this equality back into \eqref{distr:ineq:x_t_varrho} and \eqref{distr:ineq:w_t_varrho}, we deduce that  $\operatorname{traj}_{\eta_1}=\operatorname{traj}_{\eta_2}$.  Using the definition of the operator $\Phi$ (see \eqref{distr:intro:Phi}), we have that $\Phi(\eta_1)=\Phi(\eta_2)$. Therefore, we have that $\eta_1=\eta_2$.
	
\end{proof}

\section{Superposition principle for the nonlocal balance equation}\label{sect:superposition}

First, let us recall the definition of the weak solution to balance equation~\eqref{intro:eq:nonlocal_continuity}.

\begin{definition}\label{def:weak_solution}
	We say that a continuous flow of measures $m(\cdot):[0,T]\rightarrow \mathcal{M}(\rd)$ is a weak solution of balance equation~\eqref{intro:eq:nonlocal_continuity} if, for each $\phi\in C^1_c([0,T]\times\rd)$ and $s\in [0,T]$, the following equality holds: 
	\[\begin{split}
		\int_{\rd}\phi(s,&x)m(s,dx)-\int_{\rd}\phi(0,x)m_0(dx)\\&=\int_0^s\int_{\rd}\big[\partial_t\phi(t,x)+\langle \nabla\phi(x),f(t,x,m(t))\rangle+ \phi(x)g(t,x,m(t))\big]m(t,dx)dt.\end{split}\]
\end{definition}

\begin{remark} Using the same methods as in \cite[Lemma 8.1.2]{Ambrosio}, one can prove that $m(\cdot)$ is a weak solution of~\eqref{intro:eq:nonlocal_continuity} if and only, for every $\phi\in C^1_0((0,T)\times\rd)$,
	\[\int_0^T\int_{\rd}\big[\partial_t\phi(t,x)+\langle \nabla\phi(x),f(t,x,m(t))\rangle+ \phi(x)g(t,x,m(t))\big]m(t,dx)dt=0.\]
\end{remark}

The main result of this section is the following superposition principle. There, 	 $C>C_1$, where $C_1$ is introduced in Theorem~\ref{th:existence}.

\begin{theorem}\label{th:superposition}
	If $\eta\in \Gamma_T^C$ is an equilibrium distribution of  curves with time-varying weights, then $m(t)\triangleq \taverage{\eta}{t}$ is a weak solution of nonlocal balance equation~\eqref{intro:eq:nonlocal_continuity} on $[0,T]$. Conversely, if $m(\cdot)$ is a weak solution of~\eqref{intro:eq:nonlocal_continuity} satisfying the initial condition $m(0)=m_0$, then there exists an equilibrium distribution of   curves with time-varying weights $\eta$ such that $\taverage{\eta}{t}=m(t)$. In particular, there exists a unique solution of the initial value problem for  balance equation~\eqref{intro:eq:nonlocal_continuity}.
\end{theorem}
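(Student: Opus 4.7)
Fix $\phi\in C^1_c([0,T]\times\rd)$ and $s\in[0,T]$. For $\eta$-a.e.\ curve $(x(\cdot),w(\cdot))\in\Gamma_T^C$, the product $t\mapsto\phi(t,x(t))w(t)$ is $C^1$, and using the ODE system~\eqref{distr:eq:x_w} together with the assumed equality $\taverage{\eta}{t}=m(t)$ its time derivative is
\[
\partial_t\phi(t,x(t))w(t)+\langle\nabla\phi(t,x(t)),f(t,x(t),m(t))\rangle w(t)+\phi(t,x(t))g(t,x(t),m(t))w(t).
\]
Integrate from $0$ to $s$ and then against $\eta$; the compact support of $\phi$, the bound $w(t)\le C$, and conditions \ref{cond:main:continuity}--\ref{cond:main:boundness} ensure that Fubini applies. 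Rewriting each integral via the definition~\eqref{prel:intro:taverage} of $\taverage{\eta}{\cdot}$, and using Definition~\ref{def:distribution} (in particular $\taverage{\eta}{0}=m_0$) for the $t=0$ contribution, produces exactly the weak formulation of Definition~\ref{def:weak_solution}.

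\textbf{Reverse direction.} Given a weak solution $m(\cdot)$ with $m(0)=m_0$, freeze the nonlocal dependence by setting $\tilde f(t,x):=f(t,x,m(t))$ and $\tilde g(t,x):=g(t,x,m(t))$. Continuity of $m(\cdot)$ in $\mathcal{W}_{1,b}$ together with \ref{cond:main:continuity}--\ref{cond:main:boundness} makes $\tilde f,\tilde g$ continuous, bounded, and Lipschitz in $x$. Let $X_{0,t}(x_0)$ denote the classical flow of $\dot x=\tilde f(t,x)$ from $x_0$, and define
\[
W_{0,t}(x_0):=\|m_0\|\exp\Bigl(\int_0^t\tilde g(s,X_{0,s}(x_0))\,ds\Bigr).
\]
Set $\operatorname{traj}(x_0)(t):=(X_{0,t}(x_0),W_{0,t}(x_0))$ and $\eta:=\operatorname{traj}\sharp(\|m_0\|^{-1}m_0)$. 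Standard bounds ($\|\tilde f\|\le C_f$, $|\tilde g|\le C_g$) put $\eta$ in $\mathcal{P}^1(\Gamma_T^C)$ for $C>C_1$, and by construction $\eta$-a.e.\ curves satisfy \eqref{distr:eq:x_w} with $\taverage{\eta}{t}$ replaced by $m(t)$, together with $w(0)=\|m_0\|$. The remaining point is the identity $\taverage{\eta}{t}=m(t)$, which promotes $\eta$ to a genuine equilibrium distribution.

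\textbf{Identification and uniqueness.} Both $m(\cdot)$ and $\taverage{\eta}{\cdot}$ are weak solutions of the \emph{linear} balance equation
\[
\partial_t\mu+\operatorname{div}(\tilde f\mu)=\tilde g\mu,\qquad \mu(0)=m_0,
\]
the former by assumption (the weak formulation of \eqref{intro:eq:nonlocal_continuity} with $m$ frozen coincides with the linear one), the latter by repeating the forward-direction computation with $\tilde f,\tilde g$ in place of $f(\cdot,\cdot,m(\cdot)),g(\cdot,\cdot,m(\cdot))$. Uniqueness of weak solutions to this linear equation with bounded Lipschitz coefficients (e.g.\ by a Gronwall argument against the dual backward transport equation, or by applying the classical superposition principle to the augmented continuity equation obtained by renormalising through $w$) forces $\taverage{\eta}{t}=m(t)$. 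For the final uniqueness claim, two weak solutions $m_1,m_2$ with common initial datum yield via the reverse direction two equilibrium distributions $\eta_1,\eta_2\in\mathcal{P}^1(\Gamma_T^C)$, which by Theorem~\ref{th:existence} coincide, whence $m_1=m_2$. The principal obstacle is the uniqueness for the frozen linear balance equation; everything else is bookkeeping with Fubini and the definitions.
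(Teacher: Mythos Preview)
Your proof is correct and follows essentially the same route as the paper: the forward direction by the chain-rule computation on $\phi(t,x(t))w(t)$ integrated against $\eta$, and the reverse direction by freezing the measure argument, pushing $\|m_0\|^{-1}m_0$ forward along the resulting characteristic flow, and then identifying $\taverage{\eta}{t}$ with $m(t)$ via uniqueness for the linear balance equation. The paper isolates that last step as Lemma~\ref{lm:unique} and proves it (Appendix~\ref{appendix:lm_unique}) by precisely the dual backward-transport argument you propose; your final uniqueness claim via Theorem~\ref{th:existence} is also the intended mechanism.
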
 

\begin{remark} As we mentioned in the Introduction, papers \cite{Piccoli2014,Piccoli2019,Pogodaev2022} contain the existence and uniqueness results for the  balance equation with the right-hand side given by an arbitrary signed measure. However, they, in particular, assume that the source/sink term is uniformly continuous w.r.t. to the measure variable. This assumption is not directly applicable for the examined balance equation~\eqref{intro:eq:nonlocal_continuity}. 
\end{remark}

The proof of Theorem \ref{th:superposition} relies on the following uniqueness result. 
\begin{lemma}\label{lm:unique} Let $v:[0,T]\times\rd\rightarrow\rd$, $z:[0,T]\times\rd\rightarrow\mathbb{R}$ be continuous w.r.t. time variable. Additionally, assume that $v$ is Lipschitz continuous w.r.t. $x$. Then, the initial value problem for the linear balance equation:
	\begin{equation}\label{super:eq:PDE}
		\begin{split}
			&\partial_t m(t)+\operatorname{div}(v(t,x)m(t))=z(t,x)m(t),\\ &m(0)=m_0
		\end{split}
	\end{equation} allows at most one weak solution.
\end{lemma}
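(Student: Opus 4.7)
The strategy is to argue by duality. Let $m_1, m_2$ be two weak solutions with common initial datum $m_0$, and set $\mu(t) = m_1(t) - m_2(t)$. By linearity of \eqref{super:eq:PDE} in $m$ and of the identity in Definition \ref{def:weak_solution}, for every $\phi\in C^1_c([0,T]\times\rd)$ and $s\in[0,T]$,
\begin{equation*}
\int_\rd \phi(s,x)\mu(s,dx) = \int_0^s\int_\rd\bigl[\partial_t\phi(t,x)+\langle\nabla\phi(t,x),v(t,x)\rangle + \phi(t,x) z(t,x)\bigr]\mu(t,dx) dt.
\end{equation*}
To force $\mu(s)=0$ for each $s$, I will, for every $\psi\in C^\infty_c(\rd)$, exhibit a test function $\phi$ with $\phi(s,\cdot)=\psi$ annihilating the bracketed integrand on $[0,s]$, i.e., solving the dual backward equation $\partial_t\phi+\langle\nabla\phi,v\rangle+\phi z=0$. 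Since $C^\infty_c(\rd)$ separates finite signed Borel measures on $\rd$, this yields $\mu\equiv 0$.

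The natural candidate is produced by the method of characteristics. Let $X(\tau;t,x)$ denote the solution of $\dot X(\tau)=v(\tau,X(\tau))$ with $X(t)=x$; Lipschitz continuity of $v$ in $x$ together with its continuity in $t$ and the Gronwall bound $|v(\tau,x)|\le |v(\tau,0)|+L|x|$ ensure global well-posedness and locally Lipschitz dependence of $X(\tau;t,x)$ on $(t,x)$. Set
\begin{equation*}
\phi(t,x) = \psi\bigl(X(s;t,x)\bigr)\exp\!\left(\int_t^s z(\tau,X(\tau;t,x))\,d\tau\right),
\end{equation*}
which satisfies the dual equation by differentiating along $\tau\mapsto X(\tau;t,x)$ and has $\phi(s,\cdot)=\psi$. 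A second Gronwall estimate on the backward flow keeps $\phi(t,\cdot)$ uniformly supported in a fixed compact set for $t\in[0,s]$; I extend $\phi$ to $[0,T]\times\rd$ as a $C^1$ function with compact spatial support, which is harmless because the right-hand side integral runs only up to $s$.

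The hard part is that $\phi$ is only Lipschitz in $x$, not $C^1$, when $v$ is merely Lipschitz, so it fails to be a legal test function. I resolve this by mollification: take standard spatial mollifications $v_\varepsilon,z_\varepsilon$ with a uniform Lipschitz constant and with $v_\varepsilon\to v$, $z_\varepsilon\to z$ uniformly on compact subsets of $[0,T]\times\rd$. The corresponding $\phi_\varepsilon$, built from the now $C^1$ flow of $v_\varepsilon$, lie in $C^1_c([0,T]\times\rd)$ with uniform-in-$\varepsilon$ bounds on $\phi_\varepsilon$, on $\nabla\phi_\varepsilon$, and on the common compact spatial support. Plugging $\phi_\varepsilon$ into the weak identity for $\mu$ produces the remainder
\begin{equation*}
\int_0^s\int_\rd\bigl[\langle\nabla\phi_\varepsilon,v-v_\varepsilon\rangle + \phi_\varepsilon(z-z_\varepsilon)\bigr]\mu(t,dx)\,dt,
\end{equation*}
which tends to zero as $\varepsilon\downarrow 0$ by dominated convergence, since $|\mu(t)|\le m_1(t)+m_2(t)$ has finite total mass, the integrand is uniformly bounded on a common compact support in $x$, and $v_\varepsilon\to v$, $z_\varepsilon\to z$ uniformly there. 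The left-hand side passes to $\int_\rd\psi\,d\mu(s)$, and arbitrariness of $\psi$ and $s$ gives $m_1\equiv m_2$. The main delicate point is precisely the uniform control of $\|\nabla\phi_\varepsilon\|_\infty$ on the common compact support, which I expect to follow from the uniform Lipschitz estimate on the flows of $v_\varepsilon$ inherited from the Lipschitz constant of $v$, combined with the smoothness of $\psi$.
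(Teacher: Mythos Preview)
Your argument is the same duality-by-characteristics strategy as the paper's, with two minor variations. First, the paper takes an arbitrary $\psi\in C^1_c([0,T]\times\rd)$ and builds $\phi$ solving the \emph{inhomogeneous} adjoint equation $\partial_t\phi+\langle\nabla\phi,v\rangle+\phi z=\psi$ with terminal condition $\phi(T,\cdot)=0$, deducing $\int_0^T\!\int_{\rd}\psi\,dm_S\,dt=0$; you instead fix $s$ and $\psi\in C^\infty_c(\rd)$ and solve the \emph{homogeneous} adjoint equation with terminal datum $\phi(s,\cdot)=\psi$. These are interchangeable variants of the same idea. Second, you carry out the mollification explicitly, while the paper writes the characteristic construction formally (never checking that its $\phi$ is an admissible $C^1_c$ test function) and relegates the convolution step to a one-line remark after the proof. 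In that respect your write-up is more careful about where the analytic work sits.

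One caution on the point you yourself flag as delicate: the uniform bound on $\nabla\phi_\varepsilon$ contains, through the exponential weight, a term of the form $\int_t^s(\nabla_x z_\varepsilon)(\tau,X_\varepsilon(\tau;t,x))\,D_xX_\varepsilon(\tau;t,x)\,d\tau$, and the lemma as stated imposes no Lipschitz hypothesis on $z$ in the space variable, so $\|\nabla_x z_\varepsilon\|_\infty$ need not stay bounded as $\varepsilon\downarrow 0$. In the paper's only use of the lemma (the proof of Theorem~\ref{th:superposition}) one has $z(t,x)=g(t,x,m(t))$, which is Lipschitz in $x$ by~\ref{cond:main:Lip}, and then your expected estimate goes through; but for the lemma in its stated generality this step needs an extra word. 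The paper's proof shares exactly the same implicit gap.
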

This lemma was proved in \cite[Lemma 3.5]{Maniglia2007}. However, for the sake of completeness,  we give its proof in Appendix \ref{appendix:lm_unique}.

\begin{proof}[Proof of Theorem~\ref{th:superposition}] 
	First, let us prove the necessity part.
	
	Let $\eta$ be an equilibrium distribution of weighted curves. By Proposition~\ref{prop:evaluation_lipschitz}, the mapping  $t\mapsto\taverage{\eta}{t}$ is continuous. Now, let  $\phi\in C^1_c([0,T]\times \rd)$. The definition of the operation $\taverage{\eta}{t}$ gives that, for every $s\in [0,T]$,
	\[
	\begin{split}
		\int_{\rd}\phi&(s,x)\taverage{\eta}{s}(dx)-	\int_{\rd}\phi(0,x)\taverage{\eta}{0}(dx)\\&=\int_{\Gamma_T^C} \big[\phi(s,x(s))w(s)-\phi(0,x(0))w(0)\big]\eta(d(x(\cdot),w(\cdot)))\\
		&=\int_{\Gamma_T^C} \int_0^s\big[
		(\partial_t\phi(t,x(t))+\langle\phi(t,x(t)),\dot{x}(t)\rangle)w(t)\\&{}\hspace{140pt}+\phi(t,x(t))\dot{w}(t)\big]dt\eta(d(x(\cdot),w(\cdot)))\\&=
		\int_{\Gamma_T^C} \int_0^s\Big[
		(\partial_t\phi(t,x(t))+\langle\phi(t,x(t)),f(t,x(t),\taverage{\eta}{t})\rangle)w(t)\\&{}\hspace{120pt}+\phi(t,x(t))g(t,x(t),\taverage{\eta}{t})w(t)\Big]dt\eta(d(x(\cdot),w(\cdot)))\\ &=
		\int_0^s\int_{\rd} \Big[
		(\partial_t\phi(t,x)+\langle\phi(t,x),f(t,x,\taverage{\eta}{t})\rangle)+\phi(t,x)g(t,x,\taverage{\eta}{t})\Big]\taverage{\eta}{t}(dx)dt.
	\end{split}
	\] 
	
	Therefore, the flow of measures $t\mapsto\taverage{\eta}{t}$ is a weak solution of nonlocal balance equation~\eqref{intro:eq:nonlocal_continuity}.
	
	To prove the sufficiency part, given $m(\cdot)$ that is  a weak solution of balance equation~\eqref{intro:eq:nonlocal_continuity}, we construct a distribution of weighted  curves $\eta$ corresponding to the linear balance equation~\eqref{super:eq:PDE} with 
	\begin{equation*}\label{super:intro:v_z}v(t,x)\triangleq f(t,x,m(t)),\ \ z(t,x)\triangleq g(t,x,m(t)).\end{equation*} By the necessity part of the theorem, the mapping $t\mapsto\taverage{\eta}{t}$ solves~\eqref{super:eq:PDE}. Lemma~\ref{lm:unique} yields that $\taverage{\eta}{t}\equiv m(t)$. This and construction of $\eta$ give that $\eta$ is a distribution of weighted curves corresponding to~\eqref{intro:eq:nonlocal_continuity} and the initial distribution $m_0$.
\end{proof}

\section{Conservation form of the balance equation}\label{sect:particle_balance}
In this section, we rewrite balance equation~\eqref{intro:eq:nonlocal_continuity} as a differential equation on a flow of probability measures defined on $\extRd$. The right-hand side of this equation is given by a L\'evy-Khintchine generator that combines a drift part and jumps between points on $\rd$ and the remote point~$\star$ introduced in \S~\ref{subsect:general_Wasserstein}.  
Natural interpretation of this process is a mean field particle system consisting of infinitely many identical particles with evolution of each particle combining drift and jumps. 

Notice that, due to Theorem~\ref{th:existence} and equation~\eqref{finite:eq:ODE}, we can assume that $m(\cdot)$ is such that $\|m(t)\|\leq R$, where $R> ce^{C_gT}$.
Denoting $\mathcal{M}^1_R(\rd)\triangleq \{m\in\mathcal{M}^1(\rd):\|m\|\leq R\}$, we have that the mapping $m\mapsto \mu\triangleq R^{-1}\mext{m}{R}$ provides the isomorphism between $\mathcal{M}^1_R(\rd)$ and $\mathcal{P}^1(\extRd)=\{\mu\in\mathcal{P}(\extRd):\int_{\rd}\|x\|\mu(dx)<\infty\}$. Notice that the inverse mapping is $\mu \mapsto R\mu|_{\rd}$, where $\mu|_{\rd}$ denotes the restriction of the probability $\mu$ on $\rd$, i.e., for each $\Upsilon\in \mathcal{B}(\rd)$,
\[(\mu|_{\rd})(\Upsilon)\triangleq \mu(\Upsilon).\] Recall that by~\eqref{intro:intro:m_ext}  and Proposition~\ref{prop:W_p_b}, for each two measures $m_1,m_2\in \mathcal{M}^1_R(\rd)$ and $\mu_1\triangleq R^{-1}\mext{m_1}{R}$, $\mu_2\triangleq R^{-1}\mext{m_2}{R}$, 
\[\mathcal{W}_{1,b}(m_1,m_2)=RW_1(\mu_1,\mu_2).\]

Now, for $t\in [0,T]$, $x\in\extRd$, $\mu\in\mathcal{P}(\extK)$, put
\begin{equation*}\label{markov:intro:f_R}
	f_R(t,x,\mu)\triangleq \left\{\begin{array}{ll}
		f(t,x,R\mu|_{\rd}), & x\in\rd,\\
		0, & x=\star,
	\end{array}\right.
\end{equation*}
\begin{equation*}\label{markov:intro:g_R}
	g_R(t,x,\mu)\triangleq \left\{\begin{array}{ll}
		g(t,x,R\mu|_{\rd}), & x\in\rd,\\
		0, & x=\star.
	\end{array}\right.
\end{equation*}

In the Introduction, we mentioned that the function $g$ combines probability rates for the particle to disappear or to give a spring. Thus, it is convenient to distinguish the positive and negative parts of $g$:
\[g^+(t,x,m)\triangleq g(t,x,m)\vee 0,\ \ g^-(t,x,m)\triangleq (-g(t,x,m))\vee 0.\]
Similarly,
\[g^+_R(t,x,\mu)\triangleq g_R(t,x,\mu)\vee 0,\ \ g^-_R(t,x,\mu)\triangleq (-g_R(t,x,\mu))\vee 0.\] Given   $\mu\in\mathcal{P}(\extRd)$, $m\triangleq R\cdot (\mathbbm{1}_{\rd}\sharp\mu)$, the quantity \[Rg^+_R(t,y,\mu)(\mathbbm{1}_{\rd}\sharp\mu)(dy)\Delta t=g^+(t,y,m)m(dy)\Delta t\] approximately describes  the distribution of newly appeared particles   on the time interval $[t,t+\Delta t]$, while $g^-_R(t,x,\mu)\Delta t+o(\Delta t)$ is the probability of the particle that occupies the state $x$ at time $t$ to disappear on the time interval $[t,t+\Delta t]$.  

For each $t\in [0,T]$, $x\in\extRd$, $\mu\in \mathcal{P}(\extRd)$, let $\nu^-(t,x,\mu,\cdot)$ be a finite measure on $\{0,1\}$ such that \begin{equation}\label{particle:intro:nu_minus}
	\nu^-(t,x,\mu,\{1\})=
	g^-_R(t,x,\mu),\ \ \nu^-(t,x,\mu,\{0\})=0.\end{equation}

Further, denote by $\nu^+(t,x,\mu,\cdot)$ the measure on $\extRd$ defined by the rule: for $\phi\in C_b(\extRd)$,
\begin{equation}\label{particle:intro:nu_plus_star}	
	\int_{\extRd}\phi(y)\nu^+(t,m,\star,dy)\triangleq \mu^{-1}(\{\star\})\int_{\rd}\phi(y)g^+_R(t,y,\mu)\mu(dy)\end{equation}  and 
\begin{equation}\label{particle:intro:nu_plus_td}
	\int_{\extRd}\phi(y)\nu^+(t,x,\mu,dy)=0
\end{equation} when $x\in\rd$.

With some abuse of notation, we denote by $C^1(\extRd)$ the set of functions $\phi$ from $\extRd$ to $\mathbb{R}$ such that the restriction of $\phi$ on $\rd$ lies in $C^1(\rd)$. 

Let us consider the following generator on $C^1(\extRd)$:
\begin{equation}\label{particle:intro:L}
	\begin{split}
		L_t[\mu]\phi(x)\triangleq \langle f_R&(t,x,\mu),\nabla\phi(x)\rangle\\&+\int_{\{0,1\}}(\phi(x+u(\star-x))-\phi(x)) \nu^-(t,x,\mu,du)\\
		&+\int_{\extRd} (\phi(y)-\phi(x))\nu^+(t,x,\mu,dy).	\end{split}
\end{equation} 

Now let us consider the following equation 	
\begin{equation}\label{particle:eq:generator}
	\frac{d}{dt}\mu(t)=L_t^*[\mu(t)]\mu(t),
\end{equation} where $L_t^*[\mu]$ is the operator adjoint to $L_t[\mu]$.

The solution of \eqref{particle:eq:generator} is considered in the space of probability measures. In particular, it preserves the total mass equal to 1.   

\begin{definition}\label{def:generator_L} We say that $\mu(\cdot):[0,T]\rightarrow \mathcal{P}^1(\extRd)$ solve~\eqref{particle:eq:generator} provided that, for each $\phi\in C^1_0(\extRd)$, 
	\[\int_{\extRd}\phi(x)\mu(t,dx)-\int_{\extRd}\phi(x)\mu(0,dx)=\int_0^t\int_{\extRd}L_\tau[\mu(\tau)]\phi(x)\mu(\tau,dx)d\tau.\]
\end{definition}

\begin{theorem}\label{th:restriction_kynetic} 
	A flow of probabilities $\mu(\cdot)$ is a solution of the equation~\eqref{particle:eq:generator}
	if and only if $m(\cdot)$ defined by the rule $m(t)\triangleq R\mu(t)|_{\rd}$ satisfies balance equation~\eqref{intro:eq:nonlocal_continuity}. In particular, for each $\mu_0\in \mathcal{P}^1(\extRd)$, there exists a unique solution to equation \eqref{particle:eq:generator} with the initial condition $\mu(0)=\mu_0\triangleq R^{-1}\mext{m_0}{R}$.
\end{theorem}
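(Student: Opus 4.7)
The plan is to exploit the degeneracy of the jump kernels $\nu^{\pm}$ to reduce $L_t[\mu]$ to a fully explicit form, and then to transfer the integral identity of Definition~\ref{def:generator_L} back and forth to the weak form of~\eqref{intro:eq:nonlocal_continuity} via the bijection $m\leftrightarrow R^{-1}\mext{m}{R}$.

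First I would evaluate~\eqref{particle:intro:L} using~\eqref{particle:intro:nu_minus}--\eqref{particle:intro:nu_plus_td}: for $x\in\rd$ one has
\[L_t[\mu]\phi(x)=\langle f(t,x,m),\nabla\phi(x)\rangle+g^-(t,x,m)(\phi(\star)-\phi(x)),\]
while
\[L_t[\mu]\phi(\star)=\mu(\{\star\})^{-1}\int_{\rd}(\phi(y)-\phi(\star))g^+(t,y,m)\mu(dy),\]
where $m=R\mu|_{\rd}$. Integrating against $\mu$, the factor $\mu(\{\star\})$ cancels in the $\star$-contribution, and since $g^+-g^-=g$ the jump-to-$\star$ and jump-from-$\star$ pieces recombine to give
\[\int_{\extRd}L_t[\mu]\phi\,d\mu=R^{-1}\int_{\rd}\Big[\langle f(t,x,m),\nabla\phi(x)\rangle+(\phi(x)-\phi(\star))g(t,x,m)\Big]m(dx).\]

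For the forward direction I would test Definition~\ref{def:generator_L} against $\phi\in C^1_0(\extRd)$ with $\phi(\star)=0$ and $\phi|_{\rd}\in C^1_c(\rd)$; the $\star$-contribution vanishes and, after multiplying by $R$, the identity is exactly the time-independent weak form of~\eqref{intro:eq:nonlocal_continuity}. The upgrade to $C^1_c([0,T]\times\rd)$ test functions is the standard tensor-product and integration-by-parts argument in time, following~\cite[Lemma~8.1.2]{Ambrosio}. For the converse, given a weak solution $m(\cdot)$ I would first extract the global mass identity
\[\|m(t)\|-\|m(0)\|=\int_0^t\int_{\rd}g(\tau,x,m(\tau))m(\tau,dx)d\tau\]
by testing the balance equation against a smooth cut-off $\chi(\|x\|/n)$ and letting $n\to\infty$. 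Combined with the decomposition $\int_{\extRd}\phi\,d\mu(t)=R^{-1}\int_{\rd}\phi\,dm(t)+\phi(\star)(1-R^{-1}\|m(t)\|)$, this mass identity exactly absorbs the $\phi(\star)$-contributions on both sides and recovers the integral identity of Definition~\ref{def:generator_L}.

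Existence and uniqueness then transfer along the bijection: Theorem~\ref{th:superposition} produces a unique weak solution $m(\cdot)$ of~\eqref{intro:eq:nonlocal_continuity} starting from $m_0$, whence $\mu(t)=R^{-1}\mext{m(t)}{R}$ is the unique solution of~\eqref{particle:eq:generator} with initial datum $\mu_0$. The main obstacle I anticipate is justifying the cut-off limit in the mass identity when $m_0$ has non-compact support; however, the uniform bound $\|m(t)\|\leq\|m_0\|e^{C_gT}$ obtained from Theorem~\ref{th:existence} combined with $|\nabla\chi(\|\cdot\|/n)|=O(1/n)$ and the uniform bound on $\|f\|$ from~\ref{cond:main:boundness} force the gradient remainder to vanish, so the argument closes under the hypotheses already in force.
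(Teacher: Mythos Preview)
Your computation of $L_t[\mu]\phi$ and its integral against $\mu$ is exactly the paper's formula~\eqref{particle:equality:R_gen} (with $\tilde\phi=\phi|_{\rd}$, $c=\phi(\star)$), and your forward direction---testing with $\phi(\star)=0$---is identical to the paper's choice $c=0$. The genuine difference lies in the converse. The paper argues indirectly: given a solution $m(\cdot)$ of~\eqref{intro:eq:nonlocal_continuity}, it \emph{postulates} a solution $\mu(\cdot)$ of~\eqref{particle:eq:generator} with the right initial datum, invokes the forward direction to see that $R\mu(\cdot)|_{\rd}$ also solves~\eqref{intro:eq:nonlocal_continuity}, and then appeals to uniqueness (Theorem~\ref{th:superposition}) to conclude $m(t)=R\mu(t)|_{\rd}$. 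This leaves the existence of such a $\mu(\cdot)$ implicit. Your route is a direct verification that the natural candidate $\mu(t)=R^{-1}\mext{m(t)}{R}$ satisfies Definition~\ref{def:generator_L}: you split $\int\phi\,d\mu(t)$ into its $\rd$- and $\star$-parts and observe that the $\phi(\star)$-contribution is controlled precisely by the global mass identity $\|m(t)\|-\|m_0\|=\int_0^t\!\int g\,dm\,d\tau$, which you extract from the weak formulation by the cut-off argument. This is more self-contained---it simultaneously proves the converse and furnishes existence for~\eqref{particle:eq:generator} without circularity---at the modest cost of justifying the cut-off limit, which you do correctly under~\ref{cond:main:boundness}. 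Your handling of the passage between time-independent test functions (Definition~\ref{def:generator_L}) and time-dependent ones (Definition~\ref{def:weak_solution}) via~\cite[Lemma~8.1.2]{Ambrosio} is also a detail the paper leaves tacit.
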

\begin{proof} 
	
	Choose $\mu\in\mathcal{P}(\extRd)$ and set $m\triangleq R\mu|_{\rd}$. 	
	Notice that each function $\phi\in C^1(\extRd)$ can be expressed in the form 
	\[\phi(x)=\tilde{\phi}(x)\mathbbm{1}_{\rd}(x)+c\mathbbm{1}_{\{\star\}}(x),\] where $\tilde{\phi}$ is $C^1$ function defined on $\rd$, while $c$ is a constant.
	Therefore, due to the definitions of functions $f_R$, $g_R$ and measures $\nu^-$, $\nu^+$, we obtain that 
	\[\begin{split}
		L_t[\mu]\phi(x)=\langle f(t,x,m),\tilde{\phi}(x)\rangle+(c&-\tilde{\phi}(x)\mathbbm{1}_{\rd}(x))g^-(t,x,m) \\+R^{-1}\mu^{-1}(\{\star\})&\int_{\rd}(\tilde{\phi}(y)-c)g^+(t,y,m)m(dy)\mathbbm{1}_{\{\star\}}(x).\end{split}\]
	Thus, due to the fact that $g(t,x,m)=g^+(t,x,m)-g^-(t,x,m)$, we have the equality
	\begin{equation}\label{particle:equality:R_gen}
		\begin{split}
			R\int_{\extRd}L_t[\mu]&\phi(x)\mu(dx)\\=\int_{\rd}\langle &f(t,x,m),\tilde{\phi}(x)\rangle m(dx)\\&+\int_{\rd}\tilde{\phi}(x)g(t,x,m)m(dx) +c\int_{\rd}g(t,x,m)m(dx).\end{split}
	\end{equation}
	Now let $\mu(\cdot)$ solve  equation~\eqref{particle:eq:generator}.  We define $m(t)\triangleq R(\mu(t)|_{\rd})$. 
	Using this and~\eqref{particle:equality:R_gen} with $c=0$, we conclude that $m(\cdot)$ satisfies~\eqref{intro:eq:nonlocal_continuity}. 
	
	To prove the converse statement, we let $\mu_0\triangleq R^{-1}\mext{m(0)}{R}$ and find $\mu(\cdot)$ solving~\eqref{particle:eq:generator} with the initial condition $\mu(0)=\mu_0$. Therefore, $t\mapsto R(\mu(t)|_{\rd})$ is a solution of balance equation~\eqref{intro:eq:nonlocal_continuity}. The uniqueness result for this equation gives that $m(t)=R(\mu(t)|_{\rd})$.
	
\end{proof}

We complete this section with the mean field particle system interpretation of the balance equation. 

\begin{remark}\label{def:representation}
	It is natural to say that a 5-tuple $(\Omega,\mathcal{F},\{\mathcal{F}_t\}_{t\in [0,T]},\mathbb{P},X)$ is a mean field particle system representation of balance equation~\eqref{intro:eq:nonlocal_continuity} provided that
	\begin{itemize}
		\item $(\Omega,\mathcal{F},\{\mathcal{F}_t\}_{t\in [0,T]},\mathbb{P})$ is a probability space with filtration;
		\item $X$ is a $\{\mathcal{F}_t\}_{t\in [0,T]}$-adapted stochastic process with values in $\extRd$;
		\item for every $\phi\in C^1(\extRd)$, the process
		\[\phi(X(t))-\int_0^t L_{\tau}[\mu(\tau)]\phi(X(\tau))d\tau\] is a  $\{\mathcal{F}_t\}_{t\in [0,T]}$-martingale   such that 	
		\begin{equation}\label{particle:equality:m}
			\mu(t)=X(t)\sharp\mathbb{P}.
		\end{equation}
	\end{itemize}
	Indeed, one can regard the stochastic process $X$ as a dynamics of a sampling particle, while the probability $\mu(t)$ gives a distribution of all particles in time $t$. 
	
	By construction, we have that  $\mu(\cdot)$ defined by~\eqref{particle:equality:m} satisfies \eqref{particle:eq:generator}, while $m(\cdot)$ defined by the rule $m(t)=R\mu(t)|_{\rd}$ solves~\eqref{intro:eq:nonlocal_continuity}.
	
	One can show that there exists at least one particle system representation of \eqref{intro:eq:nonlocal_continuity}. The accurate proof of this fact goes beyond the scope of the paper. The scheme of the proof follows the way introduced in \cite{Kolokoltsov_Markov}.  We assume that we are given with the flow of probabilities $\mu(\cdot)$ that solves ~\eqref{particle:equality:m}.  First, we discretize the dynamics and approximate the drift part of the dynamics $f_R(t,x,\mu)$ by a Kolmogorov matrix. Then, we obtain a continuous time Markov chain. The corresponding stochastic process is uniformly stochastically continuous \cite[Theorem 5.4.1]{Kolokoltsov_Markov}. Finally, one can argue as in \cite[Theorem 5.4.1]{Kolokoltsov_Markov} and show that, when the discretization parameter tends to zero, the continuous time Markov chains converge. The limit provides the desired particle system representation.
\end{remark}

\section{Approximating system of ODEs}\label{sect:finite}
In this section, we introduce  a system of ODEs that arises when we approximate the drift term $f$ by a Kolmogorov matrix.

Let $\mathcal{S}$ be  a finite  subset of $\rd$. The fineness of $\mathcal{S}$ is evaluated by a number \begin{equation*}\label{finite:intro:d_S}
	d(\mathcal{S})\triangleq\min\big\{\|{x}-{y}\|:\, {x},{y}\in\mathcal{S},\, {x}\neq{y}\big\}.
\end{equation*} the diameter of $\mathcal{S}$ is
\[\operatorname{diam}(\mathcal{S})=\max\big\{\|{x}-{y}\|:\, {x},{y}\in\mathcal{S},\, {x}\neq{y}\big\}.\]

A signed measure on $\mathcal{S}$ is always determined by a sequence $\beta_{\mathcal{S}}=(\beta_{{x}})_{{x}\in\mathcal{S}})\subset \mathbb{R}$. We denote the set of such sequences by $\mathscr{l}_1(\mathcal{S})$. It is endowed with the norm 
\[\|\beta_{\mathcal{S}}\|\triangleq \sum_{{x}\in\mathcal{S}}|\beta_{{x}}|.\] 
We will assume that  each such sequence is a row-vector. Further, the set of elements of $\mathscr{l}_1(\mathcal{S})$ with nonnegative entries is denoted by~$\mathscr{l}_1^+(\mathcal{S})$. This space inherits the metric from~$\mathscr{l}_1(\mathcal{S})$.

The mapping
\begin{equation*}\label{finite_stae:intro:isomorph}
	\beta_{\mathcal{S}}\mapsto \mathscr{I}(\beta_{\mathcal{S}})\triangleq \sum_{{x}\in\mathcal{S}} \beta_{{x}}\delta_{{x}}
\end{equation*} provides the isomorphism between $\mathscr{l}_1^+(\mathcal{S})$ and
$\mathcal{M}(\mathcal{S})$.

Recall that, on $\mathcal{M}(\mathcal{S})$, we consider the metric $\mathcal{W}_{1,b}$. 

The distance between images w.r.t. the isomorphism $\mathscr{I}$ is evaluated in the following statement which is proved in the Appendix~\ref{appendix:distance}.

\begin{proposition}\label{prop:metric_equivalence} Let $b\geq \operatorname{diam}(\mathcal{S})$. Then, the following estimates hold true:
	\begin{equation}\label{finite:ineq:W_metric}b^{-1} \mathcal{W}_{1,b}(\mathscr{I}(\beta^1),\mathscr{I}(\beta^2))\leq \|\beta^1-\beta^2\|\leq (d(\mathcal{S}))^{-1} \mathcal{W}_{1,b}(\mathscr{I}(\beta^1),\mathscr{I}(\beta^2)).\end{equation}
\end{proposition}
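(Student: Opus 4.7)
The plan is to prove the two sides of~\eqref{finite:ineq:W_metric} by independent arguments. For the right-hand inequality (upper bound on $\mathcal{W}_{1,b}$), I would exhibit an explicit admissible triple in Definition~\ref{def:generalized_wasserstein}: set $\hat m_1 = \hat m_2 = \mathscr{I}(\beta^1\wedge\beta^2)$, where $(\beta^1\wedge\beta^2)_{{x}} = \beta^1_{{x}}\wedge\beta^2_{{x}}$, and take $\varpi = \sum_{{x}\in\mathcal{S}}(\beta^1_{{x}}\wedge\beta^2_{{x}})\,\delta_{({x},{x})}$, the diagonal plan. By construction $\hat m_i\leqq m_i$, the masses of $\hat m_1$ and $\hat m_2$ agree, and $\varpi\in\Pi(\hat m_1,\hat m_2)$. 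The transport integral vanishes since $\varpi$ is supported on the diagonal, while the mass defects add up to
\[b\sum_{{x}\in\mathcal{S}}\bigl[(\beta^1_{{x}}-\beta^1_{{x}}\wedge\beta^2_{{x}})+(\beta^2_{{x}}-\beta^1_{{x}}\wedge\beta^2_{{x}})\bigr] = b\|\beta^1-\beta^2\|,\]
so the definition yields $\mathcal{W}_{1,b}(\mathscr{I}(\beta^1),\mathscr{I}(\beta^2)) \leq b\|\beta^1-\beta^2\|$.

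For the lower bound on $\mathcal{W}_{1,b}$, I would fix an arbitrary admissible triple $(\hat m_1,\hat m_2,\varpi)$ and denote its weights by $\hat\beta^i_{{x}}$ and $\varpi_{{x},{y}}$. The hypothesis $b\geq\operatorname{diam}(\mathcal{S})$ has three immediate consequences to exploit: the metric $\rho_X$ coincides with $\|\cdot\|$ on $\mathcal{S}\times\mathcal{S}$ (no truncation); $\|{x}-{y}\|\geq d(\mathcal{S})$ for distinct ${x},{y}\in\mathcal{S}$; and $b\geq d(\mathcal{S})$. Combining these, the cost in~\eqref{prel:equality:W} dominates
\[d(\mathcal{S})\Bigl(\|m_1-\hat m_1\|+\|m_2-\hat m_2\|+\sum_{{x}\neq {y}}\varpi_{{x},{y}}\Bigr).\]
To link this to $\|\beta^1-\beta^2\|$, I would start from the marginal identities $\beta^1_{{x}} = (\beta^1_{{x}}-\hat\beta^1_{{x}})+\varpi_{{x},{x}}+\sum_{{y}\neq {x}}\varpi_{{x},{y}}$ and $\beta^2_{{x}} = (\beta^2_{{x}}-\hat\beta^2_{{x}})+\varpi_{{x},{x}}+\sum_{{y}\neq {x}}\varpi_{{y},{x}}$, subtract, apply the triangle inequality to each $|\beta^1_{{x}}-\beta^2_{{x}}|$, and sum over ${x}\in\mathcal{S}$. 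A bound of the required form on $\|\beta^1-\beta^2\|$ then drops out.

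The main technical nuance is precisely the combinatorial bookkeeping of this last step: each off-diagonal entry $\varpi_{{x},{y}}$ contributes to $|\beta^1_{{x}}-\beta^2_{{x}}|$ through its source at ${x}$ and simultaneously to $|\beta^1_{{y}}-\beta^2_{{y}}|$ through its sink at ${y}$, so the naive triangle estimate couples the summands at ${x}$ and ${y}$ and one must track carefully how source and sink contributions assemble under summation. Everything else is elementary on the finite set $\mathcal{S}$: no compactness, no narrow convergence, no measure-theoretic subtleties. The hardest substantive input is the mild geometric assumption $b\geq\operatorname{diam}(\mathcal{S})$, which both removes the truncation in $\rho_X$ and provides the uniform lower bound $d(\mathcal{S})$ on off-diagonal distances.
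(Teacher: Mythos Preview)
Your argument for the left inequality (the upper bound on $\mathcal{W}_{1,b}$) is correct and is exactly the paper's construction: the paper works in the $\star$-extended matrix picture, but the plan $a'_{{x},{y}}$ it writes down is precisely your diagonal plan together with the mass defects sent to $\star$, and the computation is identical.

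For the right inequality, your plan and the paper's are again the same in spirit (lower-bound every off-diagonal distance by $d(\mathcal{S})$, then relate the off-diagonal mass to $\|\beta^1-\beta^2\|$), but the ``technical nuance'' you flag is not a nuisance to be tidied up --- it is a genuine obstruction. Carrying out your subtraction-and-triangle step gives
\[
\|\beta^1-\beta^2\|\;\leq\;\|m_1-\hat m_1\|+\|m_2-\hat m_2\|+2\sum_{{x}\neq{y}}\varpi_{{x},{y}},
\]
with a factor $2$ on the transport term that cannot be removed. Consequently your argument yields only $\|\beta^1-\beta^2\|\leq 2\,(d(\mathcal{S}))^{-1}\mathcal{W}_{1,b}(\mathscr{I}(\beta^1),\mathscr{I}(\beta^2))$. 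This is sharp: take $\mathcal{S}=\{p,q\}$ with $\|p-q\|=d(\mathcal{S})$, $\beta^1=(2,0)$, $\beta^2=(0,2)$; then $\|\beta^1-\beta^2\|=4$, while the optimal choice in Definition~\ref{def:generalized_wasserstein} transports the mass directly, giving $\mathcal{W}_{1,b}=2\,d(\mathcal{S})$ (since $b\geq d(\mathcal{S})$ rules out a cheaper create/destroy option), so $(d(\mathcal{S}))^{-1}\mathcal{W}_{1,b}=2<4$.

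The paper's own proof has exactly the same gap: it asserts $\sum_{{x}\neq{y}}a_{{x},{y}}\geq\sum_{{x}\in\mathcal{S}}|\beta^1_{{x}}-\beta^2_{{x}}|$ without justification, and the two-point example above shows this step is false. So your proposal is as good as the paper's; both establish the (true) estimate with $2(d(\mathcal{S}))^{-1}$ in place of $(d(\mathcal{S}))^{-1}$, which is all that is needed for the metric-equivalence purpose of the proposition.
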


Now let us introduce a system of ODEs indexed by elements of  $\mathscr{l}_1^+(\mathcal{S})$. In Section~\ref{sect:coupled} we 	will show that it  approximates  original nonlocal balance equation~\eqref{intro:eq:nonlocal_continuity}. 
To this end, for ${x}\in\mathcal{S}$, set
\[  \hat{g}_{{x}}(t,\beta_{\mathcal{S}})\triangleq g(t,{x},\mathscr{I}(\beta_{\mathcal{S}})).\] 
Further, let $Q(t,\beta_{\mathcal{S}})=(Q_{{x},{y}}(t,\beta_{\mathcal{S}}))_{{x},{y}\in\mathcal{S}}$ be a Kolmogorov matrix, i.e.,
for each ${x}\in\mathcal{S}$, $Q_{{x},{y}}(t,\beta_{\mathcal{S}})\geq 0$ when ${x}\neq{y}$, whereas
\[\sum_{{y}\in\mathcal{S}}Q_{{x},{y}}(t,\beta_{\mathcal{S}})=0.\]

The approximating system takes the following form:
\[\frac{d}{dt}\beta_{{y}}(t)=\sum_{{x}\in\mathcal{S}}\beta_{{x}}(t) Q_{{x},{y}}(t,\beta_{\mathcal{S}}(t)) + \beta_{{y}}(t)\hat{g}_{{y}}(t,\beta_{\mathcal{S}}(t)).\] Since we assume that $\beta_{\mathcal{S}}=(\beta_{{x}})_{{x}\in\mathcal{S}}$ is a row-vector, one can rewrite this system in the vector form:
\begin{equation}\label{finite:eq:ODE}
	\frac{d}{dt}\beta_{\mathcal{S}}(t)=\beta_{\mathcal{S}}(t)Q(t,\beta_{\mathcal{S}}(t))+\beta_{\mathcal{S}}(t)G(t,\beta_{\mathcal{S}}(t)).
\end{equation} Here, we denote by $G(t,\beta_{\mathcal{S}})=(G_{{x},{y}}(t,\beta_{\mathcal{S}}))_{{x},{y}\in\mathcal{S}}$ the diagonal matrix determined by the rule:
\begin{equation*}\label{finite:intro:G}
	G_{{x},{y}}(t,\beta_{\mathcal{S}})=\left\{
	\begin{array}{lc}
		\hat{g}_{{x}}(t,\beta_{\mathcal{S}}), & {x}={y},\\
		0,& {x}\neq {y}.
	\end{array}
	\right.
\end{equation*}

Due to the classical existence and uniqueness theorem for ODEs, system~\eqref{finite:eq:ODE} has a unique solution for every initial condition.

To rewrite equation~\eqref{finite:eq:ODE} in the conservation form, we first extend the phase space to $\extS$ and, for each $t\in [0,T]$, $\beta_{\mathcal{S}}\in \mathscr{l}_1^+(\mathcal{S})$, define the  matrix $\mathcal{Q}(t,\beta_{\mathcal{S}})=(\mathcal{Q}_{{x},{y}}(t,\beta_{\mathcal{S}}))_{{x},{y}\in\mathcal{S}\cup\{\star\}}$ by the rule: 
\begin{equation}\label{Markov_ode:intro:Q_extened}
	\mathcal{Q}_{{x},{y}}(t,\beta_{\mathcal{S}})\triangleq \left\{\begin{array}{ll}
		Q_{{x},{y}}(t,\beta_{\mathcal{S}}), & {x},{y}\in\mathcal{S},\, {x}\neq{y}\\
		\hat{g}^-_{{x}}(t,\beta_{\mathcal{S}}), & {x}\in \mathcal{S},\, {y}=\star,\\
		(R-\|\beta\|)^{-1}\hat{g}^+_{{y}}(t,\beta_{\mathcal{S}})\beta_{{y}}, & {x}=\star,\, {y}\in\mathcal{S},\\
		Q_{{x},{x}}(t,\beta_{\mathcal{S}})-\hat{g}^-_{{x}}(t,\beta_{\mathcal{S}}), & {x}={y}\in\mathcal{S},\\
		-(R-\|\beta\|)^{-1}\sum_{{z}\in\mathcal{S}}g^+_{{z}}(t,\beta_{\mathcal{S}})\beta_{{z}}, & {x}={y}=\star.
	\end{array}\right.\end{equation} One can directly check that $\mathcal{Q}(t,\beta_{\mathcal{S}})$ is a Kolmogorov matrix.

Further, we denote the set of sequences of nonnegative numbers indexed by elements of $\mathcal{S}\cup\{\star\}$ by $\mathscr{l}_1^+(\extS)$. As above, $\mathscr{I}$ is the isomorphism between $\mathscr{l}_1^+(\extS)$ and $\mathcal{M}(\mathcal{S}\cup\{\star\})$ given by the rule: for $\beta_{\mathcal{S},\star}=(\beta_{{x}})_{{x}\in\mathcal{S}\cup\{\star\}}$, 
\[\mathscr{I}(\beta_{\mathcal{S},\star})\triangleq \sum_{{x}\in\mathcal{S}\cup\{\star\}}\beta_{{x}}\delta_{{x}}.\] 
If $\beta_{\mathcal{S},\star}=(\beta_{{x}})_{{x}\in\mathcal{S}\cup\{\star\}}\in \mathscr{l}_1^+(\extS)$, $\beta_{\mathcal{S}}$ is its restriction on $\mathcal{S}$, then 
\[ \mathcal{Q}(t,\beta_{\mathcal{S},\star})\triangleq \mathcal{Q}(t,\beta_{\mathcal{S}}).\]

Due to the definition of the matrix $\mathcal{Q}$ (see~\eqref{Markov_ode:intro:Q_extened}), we have that, if $\beta_{\mathcal{S},\star}(\cdot)=(\beta_{{x}}(\cdot))_{{x}\in\mathcal{S}\cup\{\star\}}$ satisfies
\begin{equation*}\label{finite:eq:ODE_extended}
	\frac{d}{dt}\beta_{\mathcal{S},\star}(t)=\beta_{\mathcal{S},\star}(t)\mathcal{Q}(t,\beta_{\mathcal{S},\star}(t)),
\end{equation*} then $\beta_{\mathcal{S}}(\cdot)\triangleq (\beta_{{x}}(\cdot))_{{x}\in\mathcal{S}}$ solves~\eqref{finite:eq:ODE}.

Now let us introduce the description of system~\eqref{finite:eq:ODE} through the generator technique. As above, we will use the probability measures. Notice that each $\beta_{\mathcal{S},\star}$ corresponds to the probability measure $\mu\triangleq R^{-1}\mathscr{I}(\beta_{\mathcal{S},\star})$.

If $t\in [0,T]$, ${x}\in\extS$, $\mu\in \mathcal{P}(\extS)$, then 
\begin{equation}\label{Markov_ode:intro:L_S}
	\begin{split}
		L^{Q}_t[\mu]\phi({x})\triangleq \sum_{{y}\in\mathcal{S}}[&\phi({y}) -\phi({x})]Q_{{x},{y}}(t,\mathscr{I}^{-1}(R\mu|_{\mathcal{S}}))\mathbbm{1}_{\mathcal{S}}({x})\\&+[\phi(\star)-\phi({x})]\hat{g}_{{x}}^-(t,\mathscr{I}^{-1}(R\mu|_{\mathcal{S}}))\mathbbm{1}_{\mathcal{S}}({x}) \\&+\mu^{-1}(\{\star\})\sum_{{y}\in\mathcal{S}}[\phi(y)-\phi(\star)] \hat{g}_{{y}}^+(t,\mathscr{I}^{-1}(R\mu|_{\mathcal{S}}))\mu(\{{y}\})\mathbbm{1}_{\{\star\}}({x}).	
	\end{split}
\end{equation} Using the definitions of the measures of the $\nu^-$ and $\nu^+$ (see~\eqref{particle:intro:nu_minus}--\eqref{particle:intro:nu_plus_td}), one can rewrite this definition as 
\begin{equation*}\label{Markov_ode:intro:L_S_eq}
	\begin{split}
		L^{Q}_t[\mu]\phi({x})\triangleq \sum_{{y}\in\mathcal{S}}[\phi({y})-\phi({x})]&Q_{{x},{y}}(t,\mathscr{I}^{-1}(R\mu|_{\mathcal{S}}))\mathbbm{1}_{\mathcal{S}}\\+\int_{\extS} [\phi(x&+u(\star-x))-\phi({x})]\nu^-(t,{x},\mu,du) \\&+\int_{\extS}[\phi(y)-\phi(x)] \nu^+(t,{x},\mu,dy).
	\end{split}
\end{equation*}

The conservation form of  ~\eqref{finite:eq:ODE} is the equation 
\begin{equation}\label{Markov_ode:eq:generator}
	\frac{d}{dt}\mu(t)=L_t^{Q,*}[\mu(t)]\mu(t).
\end{equation} Here $L_t^{Q,*}[\mu]$ stands for the operator adjoint to $L_t^Q[\mu]$. Its solution is considered in the weak sense. 

\begin{proposition}\label{prop:generator_finite} If $\mu_0\in\mathcal{P}(\extS)$, then there exists a unique solution to the initial value problem for equation~\eqref{Markov_ode:eq:generator} 
	and the initial condition $\mu(0)=\mu_0$. Furthermore,  $\mu(\cdot)$ solves~\eqref{Markov_ode:eq:generator} if and only if the function $\beta_{\mathcal{S}}(\cdot)$ defined by the rule $\beta_{\mathcal{S}}(t)\triangleq R\mathscr{I}^{-1}(\mu(t)|_{\mathcal{S}})$ satisfies ~\eqref{finite:eq:ODE}.
\end{proposition}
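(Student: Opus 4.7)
The plan is to exploit the finiteness of $\extS$, which reduces the generator equation~\eqref{Markov_ode:eq:generator} to a closed finite-dimensional ODE for the probability vector $(\mu(t,\{x\}))_{x\in\extS}$. Indeed, a probability measure on $\extS$ is the same data as such a vector with nonnegative entries summing to $1$, and on a finite discrete set every function is trivially in $C^1$, so the indicators $\mathbbm{1}_{\{y\}}$ are admissible test functions in Definition~\ref{def:generator_L}.

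First I would substitute $\phi=\mathbbm{1}_{\{y_0\}}$ with $y_0\in\mathcal{S}$ into the weak formulation, compute $L^{Q}_t[\mu]\mathbbm{1}_{\{y_0\}}(x)$ from~\eqref{Markov_ode:intro:L_S} case-by-case on $x\in\mathcal{S}$ and $x=\star$, and use that each row of $Q$ sums to zero. A direct computation then yields
\[
\int_{\extS}L^{Q}_t[\mu]\mathbbm{1}_{\{y_0\}}(x)\,\mu(dx)=\sum_{x\in\mathcal{S}}\mu(\{x\})Q_{x,y_0}(t,\cdot)+\mu(\{y_0\})\hat{g}_{y_0}(t,\cdot),
\]
in which the seemingly singular factor $\mu^{-1}(\{\star\})$ from~\eqref{Markov_ode:intro:L_S} is cancelled by the outer weight $\mu(\{\star\})$. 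Multiplying through by $R$ and writing $\beta_x(t)\triangleq R\mu(t,\{x\})$, I would recognize this as exactly the $y_0$-th component of~\eqref{finite:eq:ODE}. The choice $\phi=\mathbbm{1}_{\{\star\}}$ gives the complementary equation for $\beta_\star(t)$, which, since $\mathcal{Q}$ is Kolmogorov, is consistent with the conservation relation $\beta_\star(t)=R-\|\beta_{\mathcal{S}}(t)\|$.

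For the converse direction, given a solution $\beta_{\mathcal{S}}(\cdot)$ of~\eqref{finite:eq:ODE} with initial condition $\beta_{\mathcal{S}}(0)=R\mathscr{I}^{-1}(\mu_0|_{\mathcal{S}})$, I would set $\beta_\star(t)\triangleq R-\|\beta_{\mathcal{S}}(t)\|$, define $\mu(t)\triangleq R^{-1}\mathscr{I}((\beta_{\mathcal{S}}(t),\beta_\star(t)))$ on $\extS$, and reverse the computation above to verify~\eqref{Markov_ode:eq:generator} componentwise. Because $\mathcal{Q}(t,\beta)$ is Kolmogorov, mass is preserved along the augmented flow, so $\mu(t)$ remains a probability measure on $\extS$. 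Existence and uniqueness of solutions of~\eqref{finite:eq:ODE} on $[0,T]$ then follow from the classical Cauchy--Lipschitz theorem, the local Lipschitz continuity of its right-hand side being a consequence of~\ref{cond:main:Lip} and the boundedness of $Q$, while a standard Gronwall estimate of the form $\|\beta_{\mathcal{S}}(t)\|\leq\|\beta_{\mathcal{S}}(0)\|e^{C_g t}$ rules out blow-up.

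The hard part will be the bookkeeping around the remote point: one must verify that the singular factor $(R-\|\beta_{\mathcal{S}}\|)^{-1}$ appearing in the $\star$-row of the matrix $\mathcal{Q}$ in~\eqref{Markov_ode:intro:Q_extened}, together with the factor $\mu^{-1}(\{\star\})$ in~\eqref{Markov_ode:intro:L_S}, produce only removable singularities once one passes consistently between the probability viewpoint and the weight-vector viewpoint. Concretely, in the vector product $\beta_{\mathcal{S},\star}\mathcal{Q}$ the row-weight $\beta_\star=R-\|\beta_{\mathcal{S}}\|$ cancels the $(R-\|\beta_{\mathcal{S}}\|)^{-1}$ factor, leaving exactly the source term $\beta_{y_0}\hat{g}_{y_0}^+$ needed in~\eqref{finite:eq:ODE}; carrying out this cancellation cleanly in both directions is what underlies the equivalence.
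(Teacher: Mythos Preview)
Your proposal is correct and is essentially the same approach as the paper's. The paper's own proof is the single sentence ``The proposition directly follows from the definition of the generator $L^Q$ (see~\eqref{Markov_ode:intro:L_S}) and the matrix $\mathcal{Q}$,'' and your argument---testing against indicators, checking the $\mu(\{\star\})$ cancellation, and invoking the Cauchy--Lipschitz result already stated for~\eqref{finite:eq:ODE}---is precisely the unpacking of that sentence.
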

The proposition directly follows from the definition of the generator $L^Q$ (see~\eqref{Markov_ode:intro:L_S}) and the matrix $\mathcal{Q}$.

As in the case of balance equation, system of ODEs~\eqref{finite:eq:ODE} admits a particle representation.
\begin{remark}\label{remark:particle_Markov}
	A 5-tuple $(\Omega,\mathcal{F},\{\mathcal{F}\}_{t\in [0,T]},\mathbb{P},X^Q)$ is said to be a particle representation of~\eqref{finite:eq:ODE} if
	\begin{itemize}
		\item $(\Omega,\mathcal{F},\{\mathcal{F}_t\}_{t\in [0,T]},\mathbb{P})$ is a probability space with filtration;
		\item $X^Q$ is a stochastic process with values in $\mathcal{S}\cup \{\star\}$;
		\item for each $\phi\in C(\mathcal{S}\cup\{\star\})$,
		\[\phi(X^Q(t))-\int_0^t L^Q_{\tau}[\mu(\tau)]\phi(X^Q(\tau))d\tau\] is a  $\{\mathcal{F}_t\}_{t\in [0,T]}$-martingale with $\mu(\cdot)$ defined by the rule:  for each $t\in [0,T]$, $A\in \mathcal{B}(\mathcal{K})$, 
		\begin{equation}\label{Markov_ODE:equality:m}
			\mu(t)=(X^Q(t))\sharp \mathbb{P}.
		\end{equation} 
	\end{itemize} 
	
	As above, $\mu(\cdot)$ defined by~\eqref{Markov_ODE:equality:m} satisfies~\eqref{Markov_ode:eq:generator}. Thus, $\beta_{\mathcal{S}}(\cdot)$ such that $\beta_{\mathcal{S}}(t)=R\mathscr{I}^{-1}(\mu(t)|_{\mathcal{S}})$ is a solution of~\eqref{finite:eq:ODE}.
	
	The existence of the particle representation of~\eqref{finite:eq:ODE} directly follows from \cite[Theorem 5.4.2]{Kolokoltsov_Markov}
	
\end{remark}

\section{Rate of approximation of balance equation}\label{sect:coupled}

\subsection{Formulation of the approximation theorem}
For simplicity, we assume that the balance equation operates on some compact. This means that we impose the following condition.
\begin{enumerate}[label=(A\arabic*), resume*=main_cond]
	\item\label{cond:main:compact} There exists a compact $\mathcal{K}$ such that $\operatorname{supp}(m_0)\subset \mathcal{K}$ and, for each $t\in [0,T]$, $x\in \rd\setminus\mathcal{K}$, $m\in\mathcal{M}(\mathcal{K})$, 
	\[f(t,x,m)=0.\]
\end{enumerate}  
The following property directly follow from the imposed conditions.
\begin{proposition}\label{prop:K} Assume conditions~\ref{cond:main:firts_moment}--\ref{cond:main:compact}. If $m(\cdot)$ is a solution to balance equation~\eqref{intro:eq:nonlocal_continuity} with the initial condition $m(0)=m_0$, then, for every $t\in [0,T]$,
	\[\operatorname{supp}(m(t))\subset\mathcal{K}.\]
\end{proposition}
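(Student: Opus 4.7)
The plan is to apply the superposition principle (Theorem~\ref{th:superposition}) to represent $m(t)=\taverage{\eta}{t}$ for the unique equilibrium distribution $\eta$ of weighted curves, and then show that $\eta$-a.e.\ curve $(x(\cdot),w(\cdot))$ stays in $\mathcal{K}$, which forces $\taverage{\eta}{t}$ to be supported in $\mathcal{K}$ for every $t\in[0,T]$.

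The pathwise core of the argument is the following elementary fact, which I would isolate as a short lemma: if $v:[0,T]\times\rd\to\rd$ is continuous and vanishes on $\rd\setminus\mathcal{K}$, then any continuous curve $x(\cdot)$ with $x(0)\in\mathcal{K}$ satisfying $\dot x(t)=v(t,x(t))$ remains in $\mathcal{K}$. Assume otherwise: pick $\tau^\star$ with $x(\tau^\star)\notin\mathcal{K}$ and set $\tau\triangleq\sup\{t<\tau^\star:x(t)\in\mathcal{K}\}$. Closedness of $\mathcal{K}$ and continuity of $x(\cdot)$ give $x(\tau)\in\mathcal{K}$, whereas on $(\tau,\tau^\star]$ the trajectory is outside $\mathcal{K}$, so $\dot x\equiv 0$ and $x$ is constant there. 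This forces $x(\tau^\star)=x(\tau)\in\mathcal{K}$, a contradiction. Continuity of $f$ already ensures $f=0$ on $\partial\mathcal{K}$ whenever its measure argument lies in $\mathcal{M}(\mathcal{K})$, so the lemma will be directly applicable.

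The apparent circularity, namely that the hypothesis of the lemma requires $\taverage{\eta}{t}\in\mathcal{M}(\mathcal{K})$ which is what we want to prove, I would remove by redoing the Schauder construction from the proof of Theorem~\ref{th:existence} inside the subset $\mathscr{P}_{\mathcal{K}}\subset\mathscr{P}$ consisting of those $\eta$ under which $\eta$-a.e.\ curve takes values in $\mathcal{K}$. This subset is nonempty (for instance, the pushforward of $\|m_0\|^{-1}m_0$ under the map sending $x_0$ to the constant curve $(x_0,\|m_0\|)$), convex, and closed in $W_1$. It is stable under the operator $\Phi$ of~\eqref{distr:intro:Phi}: for $\eta\in\mathscr{P}_{\mathcal{K}}$, the marginal $\taverage{\eta}{t}$ is supported in $\mathcal{K}$, so the drift $f(t,\cdot,\taverage{\eta}{t})$ vanishes outside $\mathcal{K}$ and the pathwise lemma applies to the trajectories $\operatorname{traj}_\eta(x_0)$, which start at $x_0\in\operatorname{supp}(m_0)\subset\mathcal{K}$. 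Schauder's theorem inside $\mathscr{P}_{\mathcal{K}}$ then supplies a fixed point there, and by the uniqueness part of Theorem~\ref{th:existence} it must coincide with the $\eta$ associated to $m(\cdot)$. Consequently $\operatorname{supp}(m(t))=\operatorname{supp}(\taverage{\eta}{t})\subset\mathcal{K}$.

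The main obstacle is precisely the coupling between the support of the instantaneous measure and the pathwise containment of the trajectories; without imposing an inward-pointing condition on $f$ at $\partial\mathcal{K}$, it is the continuity of $f$ together with its vanishing strictly outside $\mathcal{K}$ that does the work, via the simple topological lemma above.
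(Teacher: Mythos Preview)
The paper does not supply a proof of this proposition; it simply asserts that it ``directly follows from the imposed conditions.'' Your argument is correct and amounts to a rigorous justification of that claim via the superposition principle and the pathwise invariance lemma.

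Your attention to the circularity---that condition~\ref{cond:main:compact} guarantees $f(t,\cdot,m)=0$ outside $\mathcal{K}$ only for $m\in\mathcal{M}(\mathcal{K})$, which is precisely the support property to be proved---is a genuine point that the paper glosses over. Your resolution, re-running the Schauder argument of Theorem~\ref{th:existence} inside the closed convex invariant subset $\mathscr{P}_{\mathcal{K}}$ and then invoking uniqueness, is clean and correct. So your proof is more careful than what the paper actually provides, while remaining entirely in the spirit of the tools developed there.
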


It suffices to define the functions $f_R$, $g^+$, $g^-$, $g_R+$, $g_R^-$ and the measure $\nu^-$ only for $x\in\mathcal{K}$. Furthermore,  $\nu^+(t,\star,\mu,\cdot)$ is now a measure on $\extK$ defined by the rule: for $\phi\in C_b(\extK)$,
\begin{equation}\label{particle:intro:nu_plus_star_K}	
	\int_{\extK}\phi(y)\nu^+(t,m,\star,dy)\triangleq \mu^{-1}(\{\star\})\int_{\mathcal{K}}\phi(y)g^+_R(t,y,\mu)\mu(dy).\end{equation}   
Finally, $L$ takes the form
\begin{equation*}
	\begin{split}
		L_t[\mu]\phi(x)\triangleq \langle f_R&(t,x,\mu),\nabla\phi(x)\rangle\\&+\int_{\{0,1\}}(\phi(x+u(\star-x))-\phi(x)) \nu^-(t,x,\mu,du)\\
		&+\int_{\extK} (\phi(y)-\phi(x))\nu^+(t,x,\mu,dy).	\end{split}
\end{equation*}

The approximation result  relies the following  conditions on $\mathcal{S}$ and $Q$: there exists $\varepsilon>0$ such that 
\begin{enumerate}[label=(QS\arabic*)]
	\item\label{cond:torus} for each $x\in\mathcal{K}$,
	\[\min_{{y}\in\mathcal{S}}\|x-{y}\|\leq\varepsilon;\]
	\item\label{cond:f_Q} for every $t\in [0,T]$, ${x}\in\mathcal{S}$, $\beta_{\mathcal{S}}\in \mathscr{l}_1^+(\mathcal{S})$,
	\[\Bigg\|f(t,{x},\mathscr{I}(\beta_{\mathcal{S}}))-\sum_{{y}\in\mathcal{S}}({y}-{x})Q_{{x},{y}}(t,\beta_{\mathcal{S}})\Bigg\|\leq \varepsilon;\] 
	\item\label{cond:variance} for every $t\in [0,T]$, ${x}\in\mathcal{S}$, $\beta_{\mathcal{S}}\in \mathscr{l}_1^+(\mathcal{S})$,
	\[\sum_{{y}\in\mathcal{S}}\|x-y\|^2 Q_{{x},{y}}(t,\beta_{\mathcal{S}})\leq\varepsilon^2.\] 
\end{enumerate}

It is reasonable to assume that $\varepsilon$ is sufficiently small. Without loss of generality, we will consider the case where $\varepsilon\leq 1$. Additionally, we assume that $b>\operatorname{diam}(\mathcal{K})+1\geq \operatorname{diam}(\mathcal{K}\cup\mathcal{S})$.

\begin{theorem}\label{th:approx} Assume that conditions~\ref{cond:torus}--\ref{cond:variance} are in force. Given $c>0$, there exists a constant $\widehat{C}>0$ determined by functions $f$, $g$ and the constant $c$ such that, if  
	\begin{itemize}
		\item $\beta_0\in\mathscr{l}_1^+(\mathcal{S})$, $\|\beta_0\|_1\leq c$;
		\item $m_0\in\mathcal{M}(\mathcal{K})$, $\|m_0\|\leq c$;
		\item $\beta_{\mathcal{S}}(\cdot):[0,T]\rightarrow \mathscr{l}_1^+(\mathcal{S})$ solves~\eqref{finite:eq:ODE} with the initial condition $\beta_{\mathcal{S}}(0)=\beta_0$;
		\item $m(\cdot):[0,T]\rightarrow \mathcal{M}(\mathcal{K})$ satisfies~\eqref{intro:eq:nonlocal_continuity} and the initial condition $m(0)=m_0$,
	\end{itemize}  then
	\[\mathcal{W}_{1,b}(m(t),\mathscr{I}(\beta_{\mathcal{S}}(t)))\leq \widehat{C}(\varepsilon+\mathcal{W}_{1,b}(m_0,\mathscr{I}(\beta_0)).\] 
\end{theorem}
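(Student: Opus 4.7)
The plan is to lift both sides of the estimate to particle processes on a common probability space and control the expected distance via a Grönwall argument.

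First I would fix $R$ large enough that $\|m(t)\|\vee \|\mathscr{I}(\beta_{\mathcal S}(t))\|\le R$ on $[0,T]$ (for instance $R = c\,e^{C_gT}+1$, combining Theorem~\ref{th:existence} with the analogous a~priori bound for~\eqref{finite:eq:ODE}), pass to the probability flows $\mu(t)\triangleq R^{-1}\mext{m(t)}{R}$ and $\mu^Q(t)\triangleq R^{-1}\mext{\mathscr{I}(\beta_{\mathcal S}(t))}{R}$ viewed as elements of $\mathcal P^1(\mathcal K\cup\mathcal S\cup\{\star\})$ with the metric $\rho_\star$, and invoke Proposition~\ref{prop:W_p_b} to reduce the claim to an estimate of $R\,W_1(\mu(t),\mu^Q(t))$.

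Using the particle representations of Remarks~\ref{def:representation} and~\ref{remark:particle_Markov}, I would then construct on one probability space two adapted processes $X$ (values in $\extK$) and $X^Q$ (values in $\extS$) with the prescribed marginals, coupled so as to minimize their $\rho_\star$-distance: the initial pair $(X(0),X^Q(0))$ is drawn from an optimal $W_1$-plan between $\mu(0)$ and $\mu^Q(0)$; while both particles lie in phase space, $X$ drifts by $f_R(t,X,\mu)$ while $X^Q$ jumps on $\mathcal S$ with rates $Q_{X^Q,\cdot}$, independently; the death transitions into $\star$ are maximally synchronized at rate $g^-_R(t,X,\mu)\wedge \hat g^-_{X^Q}(t,\beta_{\mathcal S})$ with independent residual intensities; when both particles sit at $\star$, a joint birth destination in $\mathcal K\times\mathcal S$ is sampled from a $W_1$-optimal plan between the two normalized birth kernels $\mu(\{\star\})^{-1}g^+_R(t,\cdot,\mu)\mu|_{\mathcal K}$ and its counterpart on $\mathcal S$, with the asymmetric residual birth rate absorbed marginally; mismatched states ($X\in\mathcal K$, $X^Q=\star$, or vice versa) evolve according to their own marginal generators.

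Setting $u(t)\triangleq \mathbb E[\rho_\star(X(t),X^Q(t))]$, I would derive a differential inequality for $u$ from the infinitesimal generator of the coupled dynamics. In the phase-space regime, conditions~\ref{cond:f_Q} and~\ref{cond:variance} yield that the effective drift $\sum_{\tilde y}(\tilde y-X^Q)Q_{X^Q,\tilde y}$ of $X^Q$ matches $f(t,X^Q,\mathscr I(\beta_{\mathcal S}))$ up to $\varepsilon$ while producing a compensated-jump martingale of quadratic variation $O(\varepsilon^2 T)$; combined with Lipschitz continuity of $f$ in $(x,m)$, condition~\ref{cond:torus}, and the elementary bound $W_1(\mu(t),\mu^Q(t))\le u(t)$, this contributes a term of order $C_1(u(t)+\varepsilon)$. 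The synchronized death and birth jumps contribute $C_2 b\bigl(|g^-(t,X,m)-g^-(t,X^Q,m^Q)|+\text{birth-kernel }W_1\text{-distance}\bigr)$, which is again $O(u(t)+\varepsilon)$ by Lipschitz continuity of $g$ and the choice of optimal birth plan. The net inequality $u(t)\le u(0)+C_3\int_0^t(u(s)+\varepsilon)\,ds$ then gives, by Grönwall, $u(t)\le \widehat C_0(u(0)+\varepsilon)$. Since $u(0) = R^{-1}\mathcal{W}_{1,b}(m_0,\mathscr I(\beta_0))$ by the initial optimal plan, multiplying by $R$ produces the claimed bound with $\widehat C = \widehat C_0(1\vee R)$.

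The main obstacle is the drift-vs.-jump comparison, because $\rho_\star$ is not smooth and Dynkin's formula cannot be invoked directly. The cleanest route is to bound $\mathbb E\|X(t)-X^Q(t)\|$ via the semimartingale decomposition of $X^Q(t)-X(t)$, combining the first-moment error from~\ref{cond:f_Q} with the martingale bound furnished by~\ref{cond:variance}, in the spirit of~\cite[Thm.~5.4.1]{Kolokoltsov_Markov} and the scheme of~\cite{Averboukh2021}. A subtler point is that the birth kernel depends nonlinearly on the full population measure, so sampling the joint birth destination from a $W_1$-optimal plan between the two kernels is essential for the error to feed cleanly into the Grönwall loop rather than blow up.
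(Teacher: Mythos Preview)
Your high-level strategy---couple the two dynamics on a product space with synchronized death/birth jumps, then close a Gr\"onwall loop on the expected coupling distance---is exactly the paper's. The paper, however, carries this out at the \emph{measure} level rather than the process level: instead of constructing coupled particles $(X,X^Q)$, it writes down a generator $\Lambda^Q_t[\vartheta]$ on $(\extK)\times(\extS)$, establishes existence of a flow $\vartheta(\cdot)$ solving $\partial_t\vartheta = \Lambda^{Q,*}_t[\vartheta]\vartheta$ (Proposition~\ref{prop:theta_exists}), and checks that its marginals are the desired flows (Proposition~\ref{prop:generator_coupling}). This sidesteps the particle-representation machinery you invoke from the Remarks, which in the paper is only sketched.

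Two implementation choices in the paper differ from yours and are worth noting. First, the non-smoothness obstacle you flag is handled not by a semimartingale decomposition but by testing the generator against the \emph{regularized} distance $\varphi_\varepsilon(x_1,x_2)=\sqrt{\|x_1-x_2\|^2+\varepsilon^2}$; a second-order Taylor expansion of $\varphi_\varepsilon$ in the jump variable (Lemma~\ref{lm:dist_f_Q}) absorbs the $Q$-jump contribution via~\ref{cond:variance} with an $O(\varepsilon)$ error, cleanly and without any $L^2$ martingale estimate. Second, the birth coupling is \emph{not} taken from a $W_1$-optimal plan between the two birth kernels as you propose: the measure $\chi^+$ samples the pair $(y_1,y_2)$ from the current coupling $\vartheta$ itself (restricted to $\mathcal{K}\times\mathcal{S}$), weighting by $g^+_R(t,y_1,\mu_1)\wedge g^+_R(t,y_2,\mu_2)$ with residual mass sent to the half-star states (see~\eqref{coupled:intro:chi_plus_st_st}). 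This self-referential choice feeds the birth error back into $\int\varphi_\varepsilon\,d\vartheta$ directly (Lemma~\ref{lm:chi_plus}), whereas your optimal-plan sampling would require a measurable selection of optimal plans and a separate estimate of the birth-kernel $W_1$-distance in terms of $u(t)$. Both routes should close, but the paper's is tidier and avoids the fixed-point subtlety your construction hides.
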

This theorem is proved in \S~\ref{subsect:distance}. The proof relies on an auxiliary construction of the generator on $(\extK)\times\extS$ introduced in~$\S$\ref{subsect:generator}.

Before, we turn to this auxiliary construction, let us present  an example of the system that approximates balance equation~\eqref{intro:eq:nonlocal_continuity} with arbitrary small accuracy.

Let $h>0$, $K_h\triangleq K+B_h$,  
\begin{equation}\label{finite:intro:example_S}
	\mathcal{S}^h\triangleq \mathcal{K}_h\cap h\mathbb{Z}^d.
\end{equation} 
Here, $B_h$ is a closed ball centered in the origin of the radius $h$, $\mathbb{Z}$ states for the set of integers. Further, consider the coordinate-wise representation of the function $f$
\[f(t,x,m)=(f_1(t,x,m),\ldots,f_d(t,x,m)).\] Put
\begin{equation}\label{finite:intro:example_Q} Q^h_{{x},{y}}(t,\beta_{\mathcal{S}})\triangleq\left\{
	\begin{array}{ll}
		h^{-1}|f_i(t,{x},\mathscr{I}(\beta_{\mathcal{S}}m))|, & \begin{array}{l} {x}\in \mathcal{K},\\  \ \  {y}={x}+ he_i\operatorname{sgn}(f_i(t,x,\mathscr{I}(\beta_{\mathcal{S}}))),\end{array}\\
		-h^{-1}\sum_{i=1}^{d}|f_i(t,{x},\mathscr{I}(\beta_{\mathcal{S}}))|, & {x}={y}\in\mathcal{K},\\
		0, & \text{otherwise}. 
	\end{array}
	\right.\end{equation} Hereinafter, $e_i$ stands for the $i$-th coordinate vector, while $\operatorname{sgn}$ denotes the sign function defined by the rule:
\[\operatorname{sgn}(a)\triangleq \left\{\begin{array}{ll}
	1, & a>0,\\
	-1, & a<0,\\
	0, & a=0.
\end{array}\right.\]

\begin{proposition}\label{prop:example}
	The lattice $\mathcal{S}^h$ and the Kolmogorov matrix defined by~\eqref{finite:intro:example_S} and~\eqref{finite:intro:example_Q} respectively satisfy conditions~\ref{cond:torus}--\ref{cond:variance} with $\varepsilon=\max\{h,d^{1/4}C_f^{1/2}\sqrt{h}\}$.
\end{proposition}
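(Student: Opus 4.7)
The plan is to verify conditions~\ref{cond:torus}--\ref{cond:variance} by direct computation, the common observation being that for any $x \in \mathcal{K}$ each candidate neighbour $y := x + h e_i \operatorname{sgn}(f_i(t, x, \mathscr{I}(\beta_{\mathcal{S}})))$ satisfies $\|y - x\| = h$ and therefore lies in $\mathcal{K} + B_h = \mathcal{K}_h$. Consequently, if $x$ also lies in $\mathcal{S}^h$, then each such $y$ belongs to $\mathcal{S}^h$, and the sums over $\mathcal{S}^h$ in \ref{cond:f_Q}--\ref{cond:variance} collapse to sums over $i = 1, \dots, d$.

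First, \ref{cond:torus} follows because for any $x \in \mathcal{K}$ the coordinate-wise rounding to $h\mathbb{Z}^d$ produces a lattice point at $\ell^\infty$-distance at most $h/2$ from $x$; this point lies in $\mathcal{K}_h$ and hence in $\mathcal{S}^h$, giving $\min_{y \in \mathcal{S}^h}\|x - y\| \leq h \leq \varepsilon$.

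Next, for \ref{cond:f_Q}, substituting~\eqref{finite:intro:example_Q} at $x \in \mathcal{K} \cap \mathcal{S}^h$ and using the identity $h \operatorname{sgn}(f_i) \cdot h^{-1}|f_i| = f_i$ yields
\begin{equation*}
\sum_{y \in \mathcal{S}^h}(y - x)\, Q^h_{x, y}(t, \beta_{\mathcal{S}}) \;=\; \sum_{i=1}^d f_i(t, x, \mathscr{I}(\beta_{\mathcal{S}}))\, e_i \;=\; f(t, x, \mathscr{I}(\beta_{\mathcal{S}})),
\end{equation*}
so the error vanishes exactly. For $x \in \mathcal{S}^h \setminus \mathcal{K}$ the row of $Q^h$ is zero by~\eqref{finite:intro:example_Q}, and one must still conclude $f(t, x, \mathscr{I}(\beta_{\mathcal{S}})) = 0$; this is the only non-mechanical step, and I would address it by invoking a mild extension of~\ref{cond:main:compact} so that $f$ vanishes on $\rd \setminus \mathcal{K}$ for all $m$, not only for $m \in \mathcal{M}(\mathcal{K})$.

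The decisive estimate, which fixes the value of $\varepsilon$, is for~\ref{cond:variance}: at $x \in \mathcal{K} \cap \mathcal{S}^h$,
\begin{equation*}
\sum_{y \in \mathcal{S}^h}\|y - x\|^2\, Q^h_{x, y}(t, \beta_{\mathcal{S}}) \;=\; h \sum_{i=1}^d |f_i(t, x, \mathscr{I}(\beta_{\mathcal{S}}))| \;\leq\; h \sqrt{d}\, \|f(t, x, \mathscr{I}(\beta_{\mathcal{S}}))\| \;\leq\; \sqrt{d}\, C_f\, h,
\end{equation*}
where the first inequality is the Cauchy--Schwarz bound $\|\cdot\|_1 \leq \sqrt{d}\,\|\cdot\|_2$ on $\rd$ and the second uses~\ref{cond:main:boundness}. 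This is exactly $\varepsilon^2$ for $\varepsilon = d^{1/4} C_f^{1/2}\sqrt{h}$, while \ref{cond:torus} and \ref{cond:f_Q} demand only $\varepsilon \geq h$; the two requirements together force the $\max$ in the statement. For $x \in \mathcal{S}^h \setminus \mathcal{K}$ the row of $Q^h$ vanishes and the bound is trivial. The only real obstacle is thus the handling of $\mathcal{S}^h \setminus \mathcal{K}$ in~\ref{cond:f_Q}; everything else is a short direct calculation.
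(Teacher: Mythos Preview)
Your proof follows the same direct-computation route as the paper: the sum in (QS2) reproduces $f$ exactly, and (QS3) is bounded via $\sum_i|f_i|\le\sqrt d\,\|f\|\le\sqrt d\,C_f$. One small slip: coordinate-wise rounding gives Euclidean distance $\le(h/2)\sqrt d$, not $\le h$, so your (QS1) argument as written only places the rounded point in $\mathcal K_h$ when $d\le4$; the paper sidesteps this by simply asserting (QS1) ``directly follows'' from the definition of $\mathcal S^h$, and your extra care about $x\in\mathcal S^h\setminus\mathcal K$ in (QS2) is likewise a refinement the paper omits.
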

\begin{proof}
	The proof mimes one given in~\cite{Averboukh2016}. Condition~\ref{cond:torus} directly follows from~\eqref{finite:intro:example_S}. 
	
	We have that 
	\[\sum_{{y}\in\mathcal{S}}({y}-{x})Q^h(t,x,\mathscr{I}(\beta_{\mathcal{S}}))=\sum_{i=1}^d f_i(t,x,\mathscr{I}(\beta_{\mathcal{S}}))e_i=f(t,x,\mathscr{I}(\beta_{\mathcal{S}})).\] This gives~\ref{cond:f_Q}.
	
	Finally, condition~\ref{cond:variance} follows from the estimate:
	\[\sum_{{y}\in\mathcal{S}}\|{y}-{x}\|^2Q^h(t,x,\mathscr{I}(\beta_{\mathcal{S}}))=h^{-1}\sum_{i=1}^dh^2|f_i(t,x,\mathscr{I}(\beta_{\mathcal{S}}))|\leq  C_f\sqrt{d}h. \] 
\end{proof}

\subsection{Coupled dynamics}\label{subsect:generator}
We choose a constant $R$ such that 
\[R>ce^{C_gT}.\]
Notice that, due to Theorem~\ref{th:existence} and equation~\eqref{finite:eq:ODE}, we can assume that $m(\cdot)$ and $\beta_\mathcal{S}(\cdot)$ defined in Theorem~\ref{th:approx} are such that $\|m(t)\|< R$, $\|\beta_{\mathcal{S}}(t)\|< R$.  

The key idea of the proof of Theorem \ref{th:approx} is to construct a generator $\Lambda^Q$ such that, if $\vartheta(\cdot):[0,T]\rightarrow \mathcal{P}^1((\extK)\times(\extS))$ solves 
\begin{equation}\label{coupled:eq:Lambda}
	\frac{d}{dt}\vartheta(t)=\Lambda^{Q,*}_t[\vartheta(t)]\vartheta(t),\end{equation} then the flows of probabilities $\mu_1(\cdot)$ and $\mu_2(\cdot)$ defined by the rule: \[\mu_i(t)\triangleq \operatorname{p}^i\sharp\vartheta(t), \ \ i=1,2\]  are solutions of~\eqref{particle:eq:generator} and~\eqref{Markov_ode:eq:generator} respectively. Hereinafter, $\Lambda^{Q,*}_t[\vartheta]$ is an operator adjoint to $\Lambda^Q_t[\vartheta]$. Notice that $\vartheta(t)$ is a plan between $\mu_1(t)$ and $\mu_2(t)$. We will evaluate $\int_{(\extK)\times (\extS)}\|x_1-x_2\|\vartheta(t,d(x_1,x_2))$. This together with the link between distance on $\mathcal{P}^1(\extK)$ and  metric $\mathcal{W}_{1,b}$ will give the proof of Theorem~\ref{th:approx}.

The generator $\Lambda^Q$ will describe the  behavior of the couple of particles where the first particle can move on $\mathcal{K}$, while the second one walks randomly on $\mathcal{S}$. Additionally, both particles can jump to and from the remote point $\star$. The probability rates of these synchronized jumps will be given by L\'{e}vy measures $\chi^-$ and $\chi^+$.  They are defined as follows.

Let $t\in [0,T]$, $x_1\in\extK$, $x_2\in\extS$ and let $\vartheta\in \mathcal{P}((\extK)\times(\extS))$. Denote $\mu_1\triangleq \operatorname{p}^1\sharp\vartheta$, $\mu_2\triangleq \operatorname{p}^2\sharp\vartheta$.

First, we choose the  measure   $\chi^-(t,x_1,x_2,\vartheta,\cdot)$ to be a measure on $\{0,1\}\times \{0,1\}$ satisfying
\begin{itemize}
	\item $\chi^-(t,x_1,x_2,\vartheta,\{(1,1)\})\triangleq g^-_R(t,x_1,\mu_1)\wedge g^-_R(t,x_2,\mu_2)$;
	\item 
	$\chi^-(t,x_1,x_2,\vartheta,\{(1,0)\})\triangleq g^-_R(t,x_1,\mu_1)-(g^-_R(t,x_1,\mu_1)\wedge g^-_R(t,x_2,\mu_2))$;
	\item 
	$\chi^-(t,x_1,x_2,\vartheta,\{(0,1)\})\triangleq g^-_R(t,x_2,\mu_2)-(g^-_R(t,x_1,\mu_1)\wedge g^-_R(t,x_2,\mu_2))$;
	\item $\chi^-(t,x_1,x_2,\vartheta,\{(0,0)\})\triangleq  0$.
\end{itemize} Recall that by definition $g^-(t,\star,\mu)\equiv 0$.

Now, let us introduce the measure  $\chi^+(t,x_1,x_2,\vartheta)$.   For every $\phi\in C_b((\extK)\times (\extS))$, we set
\begin{equation}\label{coupled:intro:chi_plus_st_st}
	\begin{split}
		\int_{(\extK)\times (\extS)}\phi(&y_1,y_2)\chi^+(t,\star,\star,\vartheta,d(y_1,y_2))\triangleq \\ (\vartheta\{(\star,\star)\})^{-1}&\int_{\mathcal{K}\times \mathcal{S}} \big[\phi(y_1,y_2)(g^+_R(t,y_1,\mu_1)\wedge g^+_R(t,y_2,\mu_2))\\+\phi(y_1,\star&)(g^+_R(t,y_1,\mu_1)-(g^+_R(t,y_1,\mu_1)\wedge g^+_R(t,y_2,\mu_2)))\\+\phi(\star,y_2&)(g^+_R(t,y_2,\mu_2)-(g^+_R(t,y_1,\mu_1)\wedge g^+_R(t,y_2,\mu_2)))\big]\vartheta(d(y_1,y_2)).
	\end{split}
\end{equation}

When $(x_1,x_2)\neq (\star,\star)$, we put 
\begin{equation}\label{coupled:intro:chi_plus_not_st_st}
	\chi^+(t,x_1,x_2,\vartheta,\cdot)\equiv 0.
\end{equation}

Notice that the synchronization of two jump-type processes was proposed in~\cite{Kolokoltsov_Markov} (see also~\cite{Khlopin2022}, where the synchronization of stopping time was considered).

Finally, let us define the generator $\Lambda^Q$ by the  rule: for $\phi\in C((\extK)\times \extS)$ such that $\phi$ is continuously differentiable w.r.t. the first variable on $\mathcal{K}$:
\begin{equation}\label{coupled:intro:generator}
	\begin{split}
		\Lambda^Q_t[\vartheta]\phi(x_1&,x_2)\triangleq \\\langle \nabla_{x_1}&\phi(x_1,x_2),f_R(t,x_1,\operatorname{p}^1\sharp\vartheta)\rangle \mathbbm{1}_{\mathcal{K}}(x_1) \\&+\sum_{{y}\in\mathcal{S}}\phi(x_1,{y})Q_{x_2,{y}}(t,\mathscr{I}^{-1}(R(\operatorname{p}^1\sharp\vartheta)|_{\mathcal{S}}))\mathbbm{1}_{\mathcal{S}}(x_2)\\&+
		\int_{\{0,1\}\times \{0,1\}}[\phi(x_1+u_1(\star-x_1),x_2+u_2(\star-x_2))-\phi(x_1,x_2)]\\&{\hspace{190pt}}\chi^-(t,x_1,x_2,\vartheta,d(u_1,u_2))\\&+
		\int_{(\extK)\times (\extS)}[\phi(y_1,y_2)-\phi(x_1,x_2)]\chi^+(t,x_1,x_2,\vartheta,d(y_1,y_2)).
	\end{split}
\end{equation} 


In the following, we, with some abuse of notation, denote by $C^1((\extK)\times(\extS))$ the set of functions $\phi:(\extK)\times(\extS)\rightarrow\mathbb{R}$ such that the mapping $x_1\mapsto \phi(x_1,x_2)$ is continuously differentiable on $\mathcal{K}$.

\begin{definition}\label{def:solution_Lambda} We say that a flow of probabilities $\vartheta:[0,T]\rightarrow\mathcal{P}^1((\extK)\times(\extS))$ is a solution to~\eqref{coupled:eq:Lambda} if, for each $\phi\in C^1((\extK)\times (\extS))$, one has that 
	\begin{equation}\label{coupled:equality:def_solution}
		\begin{split}
			\int_{(\extK)\times (\extS)}\phi(x_1,x_2)\vartheta(t,&d(x_1,x_2))\\&-\int_{(\extK)\times (\extS)}\phi(x_1,x_2)\vartheta(0,d(x_1,x_2))\\=\int_0^t \int_{(\extK)\times (\extS)}&(\Lambda^Q_\tau[\vartheta(\tau)]\phi(x_1,x_2))\vartheta(\tau,d(x_1,x_2))d\tau.\end{split}
	\end{equation}
\end{definition}

\begin{remark} As above, one can consider a particle representation of the flow $\vartheta(\cdot)$. It is a  6-tuple $(\Omega,\mathcal{F},\{\mathcal{F}_t\}_{t\in [0,T]},\mathbb{P},X,X^Q)$ such that the following conditions hold true: 
	\begin{itemize}
		\item $(\Omega,\mathcal{F},\{\mathcal{F}_t\}_{t\in [0,T]},\mathbb{P})$ is a probability space with filtration;
		\item $X$ and $X^Q$ are $\{\mathcal{F}_t\}_{t\in [0,T]}$-adapted processes taking values in $\extK$ and $\extS$ respectively;
		\item if $\vartheta(\cdot)$ is defined by the rule $\vartheta(t)\triangleq (X(t),X^Q(t))\sharp\mathbb{P}$, then the process
		\[\phi(X(t),X^Q(t))-\int_0^t\Lambda^Q_\tau[\vartheta(\tau)]\phi(X(\tau),X^Q(\tau))d\tau\] is a $\{\mathcal{F}_t\}_{t\in [0,T]}$-martingale.
	\end{itemize}  
	Notice that $X(\cdot)$ describes the continuous motion, while $X^Q$ is a Markov chain approximation. The first and second terms of the generator $\Lambda^Q$ provide the independent evolution on $\mathcal{K}$ and $\mathcal{S}$ respectively. Simultaneously, the measure $\chi^-$ gives the coordinated jumps  from $\mathcal{K}$ and $\mathcal{S}$ to the remote point $\star$. The reverse jumps are described by the measure $\chi^+$. It implies the jumps only if both components $X(t-)$ and $X^Q(t-)$ are in the remote point $\star$ and distribute the state $X(t),X^Q(t)$ according to the current measure $\vartheta(t)$. The way how one can construct a particle representation of a flow $\vartheta(\cdot)$ is briefly discussed in Remark~\ref{remark:particle_Markov}.
\end{remark}

It was mentioned above that the proof of Theorem~\ref{th:approx} relies on the properties of the solution to~\eqref{coupled:eq:Lambda}. Thus, we need an existence result.

\begin{proposition}\label{prop:theta_exists} Given an initial measure $\vartheta_0\in \mathcal{P}^1((\extK)\times(\extS))$, there exists a solution to~\eqref{coupled:eq:Lambda} that starts from $\vartheta_0$. If $\vartheta(\cdot)$ is a solution of~\eqref{coupled:eq:Lambda}, then, at any time $t$, $\vartheta(t)$ is concentrated on $(\extK)\times(\extS)$.
\end{proposition}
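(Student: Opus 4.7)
The plan is to obtain a solution via a Picard-type fixed-point argument on a suitable subset of $C([0,T];\mathcal{P}^1((\extK)\times(\extS)))$ endowed with a distance induced by $W_1$. First I would freeze the nonlinearity: given an auxiliary flow $\tilde\vartheta(\cdot)$, consider the linear generator $\tilde\Lambda_t\phi(x_1,x_2)$ obtained by replacing $\vartheta$ by $\tilde\vartheta(t)$ in the coefficients of $\Lambda^Q_t[\vartheta]\phi(x_1,x_2)$. Because $f_R$ is uniformly bounded and Lipschitz in $x_1$, the entries of $Q$ are bounded by a constant (this follows from condition \ref{cond:variance} together with the boundedness of $f$, since $\sum_{y}(Q_{x,y})^+$ times the squared distances is controlled), and the intensities $g^\pm_R$ are bounded by $C_g$, the linear Cauchy problem $\partial_t\vartheta=\tilde\Lambda^*_t\vartheta$ with initial datum $\vartheta_0$ admits a unique weak solution; one can produce it via the martingale-problem construction for continuous-time Markov chains with bounded generators as in \cite[Theorem 5.4.2]{Kolokoltsov_Markov}, with the drift-on-the-first-coordinate handled by the standard inner-approximation scheme.

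This defines a map $\Psi\colon\tilde\vartheta(\cdot)\mapsto\vartheta(\cdot)$, and I would check that $\Psi$ sends a compact convex set of equi-Lipschitz flows into itself. Since $(\extK)\times(\extS)$ is compact and all coefficients are uniformly bounded, a bound analogous to Proposition~\ref{prop:evaluation_lipschitz} yields $W_1$-equi-continuity of the curve $t\mapsto\vartheta(t)$, so Arzelà–Ascoli gives compactness of the image. Continuity of $\Psi$ with respect to uniform narrow convergence of $\tilde\vartheta(\cdot)$ reduces to continuity of the coefficients of $\Lambda^Q_t[\cdot]$: the drift, the entries of $Q$, the measures $\chi^-$, and the integrand in $\chi^+$ are all continuous functions of the marginals $\operatorname{p}^1\sharp\tilde\vartheta(t)$, $\operatorname{p}^2\sharp\tilde\vartheta(t)$ and of $\tilde\vartheta(t)$ itself. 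Schauder's fixed-point theorem then supplies a fixed point $\vartheta(\cdot)$, which, by the definition of $\Psi$, satisfies \eqref{coupled:equality:def_solution} for every $\phi\in C^1((\extK)\times(\extS))$.

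For the concentration statement, I would simply test \eqref{coupled:equality:def_solution} against functions $\phi$ depending only on $x_1$ and only on $x_2$: these computations show that $\mu_1(t)\triangleq\operatorname{p}^1\sharp\vartheta(t)$ satisfies \eqref{particle:eq:generator} and $\mu_2(t)\triangleq\operatorname{p}^2\sharp\vartheta(t)$ satisfies \eqref{Markov_ode:eq:generator}, since the synchronizing pieces of $\chi^\pm$ degenerate into the single-particle jump measures $\nu^\pm$ when integrated over one coordinate. Hence by Theorem~\ref{th:restriction_kynetic} together with Proposition~\ref{prop:K} we get $\operatorname{supp}(\mu_1(t))\subset\extK$, and by construction $\operatorname{supp}(\mu_2(t))\subset\extS$, which gives the stated concentration of $\vartheta(t)$ on $(\extK)\times(\extS)$.

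The main obstacle I expect is the apparently singular factor $\vartheta(\{(\star,\star)\})^{-1}$ appearing in \eqref{coupled:intro:chi_plus_st_st}, which could spoil both well-posedness of the linearized problem and continuity of $\Psi$. I would handle this by working throughout with the \emph{product} $\vartheta(\{(\star,\star)\})\chi^+(t,\star,\star,\vartheta,\cdot)$, which is a bilinear expression in the mass at $(\star,\star)$ and in the coefficients $g^\pm_R$ and $\vartheta|_{\mathcal{K}\times\mathcal{S}}$, and is therefore continuous in $\vartheta$ for the narrow topology; this product is precisely what enters the forward equation, so the denominator never truly appears, and standard estimates suffice.
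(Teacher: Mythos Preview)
Your overall strategy---freeze the measure dependence, solve the linear Fokker--Planck problem, and apply Schauder---is a legitimate alternative to what the paper actually does, but there is a genuine gap in your treatment of the $\chi^+$ singularity, and it is worth seeing how the paper sidesteps the same difficulty.

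\textbf{The gap.} Your proposed fix, ``work with the product $\vartheta(\{(\star,\star)\})\chi^+(t,\star,\star,\vartheta,\cdot)$,'' is correct for the \emph{nonlinear} forward equation: there the mass $\vartheta(\{(\star,\star)\})$ coming from integration against the state cancels the $(\vartheta\{(\star,\star)\})^{-1}$ in the definition of $\chi^+$. But in your Picard scheme the coefficient measure is $\tilde\vartheta(t)$ while the state is $\vartheta(t)$, so the contribution of $\chi^+$ to the linearized weak equation is
\[
\vartheta(t)(\{(\star,\star)\})\cdot\bigl(\tilde\vartheta(t)\{(\star,\star)\}\bigr)^{-1}\int_{\mathcal{K}\times\mathcal{S}}[\,\cdots\,]\,\tilde\vartheta(t,dy),
\]
and nothing cancels. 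Equivalently, the frozen generator $\tilde\Lambda_t$ has total jump intensity at $(\star,\star)$ of order $(\tilde\vartheta(t)\{(\star,\star)\})^{-1}$, so it is not uniformly bounded on the full set of flows; both the solvability of the linear problem via \cite[Theorem~5.4.2]{Kolokoltsov_Markov} and the continuity of $\Psi$ break down near $\tilde\vartheta(t)\{(\star,\star)\}=0$. Restricting to a sublevel set $\{\tilde\vartheta(t)\{(\star,\star)\}\ge\delta\}$ does not obviously help either: the linearized outflow rate from $(\star,\star)$ is then of order $C_g/\delta$, so $\Psi$ need not preserve this set. Your sketch does not address this, and it is not a purely cosmetic point.

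\textbf{What the paper does instead.} The paper avoids linearization entirely. It replaces the drift $f_R$ by a Kolmogorov matrix $\mathcal{Q}^a$ on a lattice $\mathcal{S}^a$ of mesh $a$ (formula~\eqref{coupled:intro:Q_a}), so that the approximating equation~\eqref{coupled:eq:S_r_Lambda} is a finite-state \emph{nonlinear} forward ODE. In that ODE the $\chi^+$ term always enters already multiplied by $\vartheta^a(\{(\star,\star)\})$, so the right-hand side is manifestly regular and existence follows from \cite[Theorem~7.2.1]{Kolokoltsov_DE}. Uniform Lipschitz-in-time estimates (independent of $a$) give compactness in $C([0,T];\mathcal{P}^1)$ by Arzel\`a--Ascoli, and a direct computation shows $\Lambda^{a_n}_t[\vartheta_n]\phi\to\Lambda^Q_t[\vartheta]\phi$ for $\phi\in C^1$, which lets one pass to the limit in the weak formulation. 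Thus the paper's discretization of the \emph{drift} (not a freezing of the nonlinearity) is precisely what dodges the singularity.

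\textbf{On concentration.} Your argument via marginals is essentially Proposition~\ref{prop:generator_coupling}, proved immediately afterward in the paper; invoking Theorem~\ref{th:restriction_kynetic} and Proposition~\ref{prop:K} is correct but circuitous. The paper simply observes that the generator $\Lambda^Q_t[\vartheta]$ does not transport mass out of $(\extK)\times(\extS)$ once $\vartheta$ is supported there, which is more direct.
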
  
\begin{proof}
	Notice that such existence results are analogous to one proved in~\cite{Kolokoltsov_Markov, Kolokoltsov_DE}. There, the approximation of the dynamics by Markov chains was used. However, for the sake of completeness, we give the direct proof. The approximation technique follows the way introduced in Section~\ref{sect:finite}.
	
	Let $a>0$, $K^a\triangleq K+B_a$, $\mathcal{S}^a\triangleq K_a\cap a\mathbb{Z}^d$. Further, if $x,y\in\mathcal{S}^a$, $\vartheta^a\in\mathcal{P}^1((\extSr)\times(\extS))$, put 
	\begin{equation}\label{coupled:intro:Q_a}
		\mathcal{Q}^a_{x,y}(t,\vartheta^a)\triangleq\left\{
		\begin{array}{ll}
			a^{-1}|f_i(t,x,\operatorname{p}^1\sharp\vartheta^a)|, & \begin{array}{l} x\in \mathcal{K},\\  \ \  y=x+ ae_i\operatorname{sgn}(f_i(t,x,\operatorname{p}^1\sharp\vartheta^a)),\end{array}\\
			-a^{-1}\sum_{i=1}^{d}|f_i(t,x,\operatorname{p}^1\sharp\vartheta^a)|, & x=y\in\mathcal{K},\\
			0, & \text{otherwise}. 
		\end{array}
		\right.	
	\end{equation}
	For each $t\in [0,T]$, $\vartheta^a\in\mathcal{P}^1((\extSr)\times(\extS))$, let us consider the following  generator:
	\[\begin{split}	\Lambda^a_t[\vartheta^a]\phi(x_1&,x_2)\triangleq \\\sum_{y\in\mathcal{S}^a}&\phi(y,x_2)\mathcal{Q}^a_{x_1,y}(t,\vartheta^a) +\sum_{{y}\in\mathcal{S}}\phi(x_1,{y})Q_{x_2,{y}}(t,\mathscr{I}^{-1}(R(\operatorname{p}^1\sharp\vartheta^a)|_{\mathcal{S}}))\\&+
		\int_{\{0,1\}\times \{0,1\}}[\phi(x_1+u_1(\star-x_1),x_2+u_2(\star-x_2))-\phi(x_1,x_2)]\\&{\hspace{210pt}}\chi^-(t,x_1,x_2,\vartheta^a,d(u_1,u_2))\\&+
		\int_{(\extK)\times (\extS)}[\phi(y_1,y_2)-\phi(x_1,x_2)]\chi^+(t,x_1,x_2,\vartheta^a,d(y_1,y_2)).
	\end{split}\]
	
	Due to~\cite[Theorem 7.2.1]{Kolokoltsov_DE}, for each $\vartheta_0^a$ that is concentrated on $(\extSr)\times(\extS)$, there exists a solution of the equation 
	\begin{equation}\label{coupled:eq:S_r_Lambda}
		\frac{d}{dt}\vartheta^a(t)=\Lambda^{a,*}_t[\vartheta^a(t)]\vartheta^a(t).
	\end{equation} As above, $\Lambda^{a,*}_t[\theta]$ stands for the operator adjoint to $\Lambda^a_t[\theta]$. The solution of~\eqref{coupled:eq:S_r_Lambda} is considered in the weak sense. Notice that, for each $t$, $\vartheta^a(t)$ is concentrated on $(\extSr)\times(\extS)$. Moreover, letting $\phi\equiv 1$, we conclude that each  measure $\vartheta^a(t)$ is a probability.
	
	Further, let $\phi\in \operatorname{Lip}_1((\extSr)\times(\extS))$, we have that 
	\[\Bigg|\int_{(\extSr)\times (\extS)}(\Lambda^a_t\phi(x_1,x_2)) \vartheta^a(t)\Bigg| \leq C_f+C_{10}\|\phi\|.\] Here $C_{10}$ is a constant that can depend on $\mathcal{Q}$ but does not depend on $a$. Notice that, without loss of generality, one can assume that $\phi(\star,\star)=0$ and, thus, for some constant $C_{11}$, $\|\phi\|\leq C_{11}$.  Therefore, for each $s,r\in [0,T]$, and $\phi\in\operatorname{Lip}_1((\extK)\times(\extS))$ such that $\phi(\star,\star)=0$,
	\[\begin{split}
		\Bigg|\int_{(\extK)\times(\extS)}\phi(&x_1,x_2)\vartheta^a(r,d(x_1,x_2))\\-&\int_{(\extK)\times(\extS)}\phi(x_1,x_2)\vartheta^a(s,d(x_1,x_2))\Bigg|\leq C_{12}(r-s).\end{split}\] As above, $C_{12}$ stands for a constant. Due to this estimate and the Kantorovich-Rubinstein duality~\cite{Ambrosio}, we have that the set of curves~$\{\vartheta^a(\cdot)\}$ are relatively compact in $C([0,T],\mathcal{P}^1((\mathcal{K}^1\cup\{\star\})\times (\extS))$. Assuming that $\{\vartheta^{a}_0\}$ converges to $\vartheta_0$, one can construct a sequence $\{a_n\}_{n=1}^\infty$ and a limiting curve $\vartheta(\cdot)$ such that $\vartheta(0)=\vartheta_0$, $a_n\rightarrow 0$ as $n\rightarrow\infty$, while $\{\vartheta^{a_n}(\cdot)\}$ converges to $\vartheta(\cdot)$ in $C([0,T],\mathcal{P}^1((\mathcal{K}^1\cup\{\star\})\times (\extS))$. 
	
	Let us prove that $\vartheta(\cdot)$ is a solution to~\eqref{coupled:eq:Lambda}. To this end, we prove  that, if 
	\begin{itemize}
		\item $\{\vartheta_n\}_{n=1}^\infty\subset\mathcal{P}^1((\extSr)\times (\extS))$,
		\item  $\vartheta\in\mathcal{P}^1((\extK)\times(\extS))$, 
		\item $\{a_n\}_{n=1}^\infty\subset (0,+\infty)$,
		\item $a_n\rightarrow 0$  $\vartheta_n\rightarrow\vartheta$ as $n\rightarrow\infty$,
	\end{itemize} then, for each $x_1\in\mathcal{K}^1\cup\{\star\}$, $x_2\in\extS$, $t\in [0,T]$, $\phi\in C^1((\mathcal{K}^1\cup\{\star\})\times(\extS))$,
	\begin{equation}\label{coupled:convergence:Lambda}
		\Lambda^{a_n}_t[\vartheta_n]\phi(x_1,x_2)\rightarrow \Lambda^Q_t[\vartheta]\phi(x_1,x_2).
	\end{equation}
	
	First, notice that the dependence
	\begin{equation*}\label{coupled:convergence:lasts}
		\begin{split}
			\vartheta\mapsto
			\sum_{{y}\in\mathcal{S}}\phi(&x_1,{y})Q_{x_2,{y}}(t,\mathscr{I}^{-1}(R(\operatorname{p}^1\sharp\vartheta)|_{\mathcal{S}}))\\&+
			\int_{\{0,1\}\times \{0,1\}}[\phi(x_1+u_1(\star-x_1),x_2+u_2(\star-x_2))-\phi(x_1,x_2)]\\&{\hspace{190pt}}\chi^-(t,x_1,x_2,\vartheta,d(u_1,u_2))\\&+
			\int_{(\extK)\times (\extS)}[\phi(y_1,y_2)-\phi(x_1,x_2)]\chi^+(t,x_1,x_2,\vartheta,d(y_1,y_2))
		\end{split}
	\end{equation*} 
	is continuous for each $t\in [0,T]$, $x_1\in \mathcal{K}$, $x_2\in\mathcal{S}$, $\phi\in C((\extK)\times(\extS))$. 
	
	Thanks to the choice of the function $\phi$, the modulus of continuity $\varsigma(\cdot)$ of the dependence  $\mathcal{K}\ni x_1\mapsto\nabla_{x_1}\phi(x_1,x_2)$ is well-defined. In particular, $\varsigma(a)\rightarrow 0$ as $a\rightarrow 0$. Further,
	\[|a^{-1}(\phi(x_1\pm ae_i,x_2)-\phi(x_1,x_2))-\langle\nabla_{x_1}\phi(x_1,x_2),e_1\rangle|\leq \varsigma(a).\] Therefore, by~\eqref{coupled:intro:Q_a}, we have that, for $x_1\in\mathcal{K}^1$, $x_2\in\extS$,
	\[\Bigg|\sum_{y\in\mathcal{S}^a}\phi(y,x_2)\mathcal{Q}^a_{x_1,y}(\vartheta)-\big\langle f(t,x_1,\operatorname{p}^1\sharp\vartheta),\nabla_{x_1}\phi(x_1,x_2)\big\rangle\Bigg|\leq C_f\varsigma(a).\]
	Combining this with the  continuity of the last three terms in the formulae for the generators $\Lambda^Q$ (see~\eqref{coupled:intro:generator}) and the continuity of the function $f$, we obtain the convergence result~\eqref{coupled:convergence:Lambda}.
	Thus, we can pass to the limit in~\eqref{coupled:eq:S_r_Lambda} and conclude that $\vartheta(\cdot)$ solves~\eqref{coupled:eq:Lambda}.
	
	The fact that $\vartheta(t)$ is concentrated on $(\extK)\times(\extS)$  directly follows from the fact that $\Lambda^Q[\vartheta]$ is concentrated on $(\extK)\times(\extS)$ while $\vartheta$ is concentrated on this set. Finally, choosing $\phi^a(x_1,x_2)\equiv 1$, we conclude that $\vartheta(t,(\extK)\times (\extS))=1$. This means that each measure $\vartheta(t)$ is a probability.
	
\end{proof}

Further, we show that the first and  second marginal distributions of the solution of~\eqref{coupled:eq:Lambda} solve~\eqref{particle:eq:generator},~\eqref{Markov_ode:eq:generator} respectively. 

\begin{proposition}\label{prop:generator_coupling} Let $\vartheta(\cdot)$ solve the equation~\eqref{coupled:eq:Lambda}
	and let $\vartheta(0)$ be concentrated on $(\extK)\times(\extS)$. Then,
	\begin{itemize}
		\item the flow of probabilities $\mu_1(\cdot)$  defined by the rule $\mu_1(t)\triangleq \operatorname{p}^1\sharp(\vartheta(t))$ satisfies equation~\eqref{particle:eq:generator};
		\item the flow of probabilities $\mu_2(\cdot)$ such that $\mu_2(t)\triangleq \operatorname{p}^2\sharp(\vartheta(t))$ is a solution of~\eqref{Markov_ode:eq:generator}.
	\end{itemize}
\end{proposition}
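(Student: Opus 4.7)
The plan is to reduce both claims to the weak formulation \eqref{coupled:equality:def_solution} by choosing lifted test functions on $(\extK)\times(\extS)$ that depend on only one coordinate and matching the resulting integrand with the corresponding generator $L_t$ or $L^Q_t$ after integration against $\vartheta(t)$. I will describe the argument for the first marginal in detail; the second is entirely symmetric.

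First I would fix $\phi\in C^1(\extRd)$ and define $\tilde\phi(x_1,x_2)\triangleq\phi(x_1)$, which lies in $C^1((\extK)\times(\extS))$ by the convention of Section~\ref{subsect:generator}. Applying Definition \ref{def:solution_Lambda} with $\tilde\phi$ and using that $\mu_1=\operatorname{p}^1\sharp\vartheta$, the left-hand side of \eqref{coupled:equality:def_solution} becomes $\int\phi\,d\mu_1(t)-\int\phi\,d\mu_1(0)$. The remaining task is to verify that, for each $\tau\in[0,T]$,
\begin{equation*}
\int_{(\extK)\times(\extS)}\Lambda^Q_\tau[\vartheta(\tau)]\tilde\phi(x_1,x_2)\,\vartheta(\tau,d(x_1,x_2))=\int_{\extK} L_\tau[\mu_1(\tau)]\phi(x)\,\mu_1(\tau,dx),
\end{equation*}
for then arbitrariness of $\phi$ yields equation \eqref{particle:eq:generator}.

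The verification is a term-by-term analysis of \eqref{coupled:intro:generator} exploiting that $\tilde\phi$ depends only on $x_1$. The drift piece reduces to $\langle\nabla\phi(x_1),f_R(\tau,x_1,\mu_1)\rangle\mathbbm{1}_\mathcal{K}(x_1)$, which on integration reproduces the drift part of $L_\tau[\mu_1]\phi$. The Markov-chain jump piece $\sum_{y\in\mathcal{S}}\phi(x_1)Q_{x_2,y}\mathbbm{1}_\mathcal{S}(x_2)$ vanishes pointwise because $Q$ is Kolmogorov (row sums are zero), so projecting to the first variable creates no residue. The $\chi^-$ piece collapses to $(\phi(\star)-\phi(x_1))\cdot[\chi^-(\{(1,0)\})+\chi^-(\{(1,1)\})]$, whose bracket equals $g_R^-(\tau,x_1,\mu_1)$ by the very construction of the coupling, so after integration against $\vartheta$ one recovers the $\nu^-$ contribution to $L_\tau[\mu_1]\phi$.

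The most delicate step is the $\chi^+$ matching, since formula \eqref{coupled:intro:chi_plus_st_st} is phrased in terms of $\vartheta$ rather than of $\mu_1$. Because $\chi^+$ vanishes except at $(\star,\star)$, outer integration against $\vartheta$ produces a prefactor $\vartheta\{(\star,\star)\}$ that exactly cancels the normaliser $\vartheta\{(\star,\star)\}^{-1}$ in the definition. With $\tilde\phi(y_1,y_2)-\tilde\phi(\star,\star)=\phi(y_1)-\phi(\star)$, the third summand of \eqref{coupled:intro:chi_plus_st_st} vanishes identically, while the first two combine via $(a\wedge b)+(a-a\wedge b)=a$ to give the factor $g_R^+(\tau,y_1,\mu_1)$; the resulting expression is the appearance contribution $\int(\phi(y)-\phi(\star))g_R^+(\tau,y,\mu_1)\,\mu_1(dy)$ appearing in $\int L_\tau[\mu_1]\phi\,d\mu_1$ via \eqref{particle:intro:nu_plus_star_K}. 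Collecting the four contributions finishes the first claim. The argument for $\mu_2$ is obtained by the symmetric choice $\tilde\phi(x_1,x_2)=\phi(x_2)$, $\phi\in C(\extS)$: the drift term now disappears, the Kolmogorov jump in $x_2$ reproduces the $Q$-part of $L^Q_\tau$ (see \eqref{Markov_ode:intro:L_S}), and the $\chi^-$, $\chi^+$ pieces are handled by the symmetric marginalisations in $u_2$ and $y_2$ respectively.
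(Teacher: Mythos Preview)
Your proposal is correct and follows essentially the same route as the paper's proof: lift a one-variable test function to $(\extK)\times(\extS)$, observe that the $Q$-jump term drops out by the Kolmogorov row-sum condition, reduce the $\chi^-$ term to $\nu^-$ via the marginal identity $\chi^-(\{(1,0)\})+\chi^-(\{(1,1)\})=g_R^-$, and handle $\chi^+$ by cancelling the $\vartheta\{(\star,\star)\}$ normaliser against the outer integration while collapsing the first two summands through $(a\wedge b)+(a-a\wedge b)=a$. Your write-up is in fact slightly more explicit than the paper's (which silently omits the $Q$-term in its displayed formula for $\Lambda^Q_t[\vartheta]\phi(x_1)$), but the argument is the same.
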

\begin{proof}
	To prove the first statement, we assume that $\phi$ depends only on $x_1\in\extK$. For each $\vartheta\in\mathcal{P}^1((\extK)\times(\extS))$, put $\mu_1\triangleq \operatorname{p}^1\sharp\vartheta$. We have that 
	\begin{equation}\label{coupled:Lambda:x_1:1}
		\begin{split}
			\Lambda^Q_t[\vartheta]\phi&(x_1)\triangleq \langle \nabla\phi(x_1),f_R(t,x_1,\mu_1)\rangle\\&+
			\int_{\{0,1\}\times \{0,1\}}[\phi(x_1+u_1(\star-x_1))-\phi(x_1)]\chi^-(t,x_1,x_2,\vartheta,d(u_1,u_2))\\& +
			\int_{(\extK)\times (\extS)}[\phi(y_1)-\phi(x_1)]\chi^+(t,x_1,x_2,\vartheta,d(y_1,y_2)).	
		\end{split}
	\end{equation}

	Further, recall that 
	\begin{equation}\label{coupled:Lambda:x_1:2}
		\begin{split}\int_{\{0,1\}\times \{0,1\}}[\phi(x_1+u_1&(\star-x_1))-\phi(x_1)]\chi^-(t,x_1,x_2,\vartheta,d(u_1,u_2))\\&= \int_{\{0,1\}}[\phi(x_1+u_1(\star-x_1))-\phi(x_1)]\nu^-(t,x_1,\mu_1,du_1).\end{split}
	\end{equation} 
	
	Additionally, 
	from the definition of the measure $\chi^+$ (see~\eqref{coupled:intro:chi_plus_st_st},~\eqref{coupled:intro:chi_plus_not_st_st}), we have that 
	\begin{equation*}
		\begin{split}
			\int_{(\extK)\times (\extS)}[\phi(y_1)-\phi(x_1)&]\chi^+(t,x_1,x_2,\vartheta,d(y_1,y_2))\\=
			(\vartheta\{(\star,\star)\})^{-1}\int_{\mathcal{K}\times \mathcal{S}} (\phi(y_1&)-\phi(\star))g^+_R(t,y_1,\mu_1)\vartheta(d(y_1,y_2))\mathbbm{1}_{\{(\star,\star)\}}(x_1,x_2).
		\end{split}
	\end{equation*} Integrating the both parts of this equality against the measure $\vartheta(d(x_1,x_2))$, we have that
	\begin{equation*}\label{coupled:equality:integral_Lambda}
		\begin{split}
			\int_{(\extK)\times (\extS)}\int_{\mathcal{K}\times \mathcal{S}}[\phi(y_1)-\phi(x_1)]\chi^+(t,x_1,x_2,\vartheta,d&(y_1,y_2))\vartheta(d(x_1,x_2))\\=
			\int_{\extK} (\phi(y_1)-\phi&(\star))g^+_R(t,y_1,\mu_1)\mu_1(dy_1).
		\end{split}
	\end{equation*} Using this and the definition of the measure $\nu^+$ (see~\eqref{particle:intro:nu_plus_star},~\eqref{particle:intro:nu_plus_td},~\eqref{particle:intro:nu_plus_star_K}), we conclude that 
	\begin{equation*}
		\begin{split}
			\int_{(\extK)\times (\extS)}\int_{\mathcal{K}\times \mathcal{S}}[\phi(y_1)-\phi(x_1)]&\chi^+(t,x_1,x_2,\vartheta,d(y_1,y_2))\vartheta(d(x_1,x_2))\\=\int_{\extK}\int_{\extK}&[\phi(y_1)-\phi(x_1)]\nu^+(t,x_1,\mu_1,dy_1)d\mu_1(dx_1).
		\end{split}
	\end{equation*}
	
	This expression and formulae~\eqref{coupled:Lambda:x_1:1},~\eqref{coupled:Lambda:x_1:2} yield the equality
	\begin{equation}\label{couple:equality_theta_mu}
		\begin{split}
			\int_{(\extK)\times (\extS)}&\Lambda^Q_t[\vartheta]\phi(x_1)\vartheta(d(x_1,x_2))=\int_{\mathcal{K}}\langle\nabla\phi(x_1),f_R(t,x_1,\mu_1)\rangle\\&+\int_{\extK}\int_{\{0,1\}}[\phi(x_1+u_1(\star-x_1))-\phi(x_1)]\nu^-(t,x_1,\mu_1,du_1)\\&+\int_{\extK}\int_{\extK}[\phi(y_1)-\phi(x_1)]\nu^+(t,x_1,\mu_1,dy_1)\mu_1(dx_1)\\ =
			&\int_{\extK}L_t[\mu_1]\phi(x)\mu_1(dx).
		\end{split}
	\end{equation} Now, recall that $\vartheta(\cdot)$ satisfies~\eqref{coupled:eq:Lambda}, while $\mu_1(\cdot)$ is defined by the rule: $\mu_1(t)=\operatorname{p}^1\sharp\vartheta(t)$. This,~\eqref{couple:equality_theta_mu} and the definition of the generator $L_t$ (see~\eqref{particle:intro:L}) yield that, if $\phi\in C^1(\extK)$, then
	\[\frac{d}{dt}\int_{\extK}\phi(x)\mu_1(dx)=\int_{\extK}L_t[\mu_1]\phi(x)\mu_1(dx).\]
	This implies the first statement of the proposition. 
	
	The second statement is proved in the same way.
\end{proof}

\subsection{Regularized distance between marginal distribution}\label{subsect:distance}

In this section, we give the proof of Theorem \ref{th:approx}. As we mentioned above, the natural idea here is to study the evolution of the integral $\int_{(\extK)\times(\extS)}\|x_1-x_2\|\vartheta(t,d(x_1,x_2))$ using equation~\eqref{coupled:equality:def_solution}. However, the distance function is not smooth. Therefore, we regularize it and evaluate the action of the generator $\Lambda^Q_t[\vartheta]$ on the function 
\begin{equation*}\label{coupled:intro:varphi_alpha}
	\varphi_\varepsilon(x_1,x_2)\triangleq \sqrt{\|x_1-x_2\|^2+\varepsilon^2}.
\end{equation*}

In the following lemmas, we gradually estimate the action of the parts of the generator $\Lambda^Q$ on this function. These results will imply Theorem \ref{th:approx}. We start with the evaluation of the action of first two terms.

\begin{lemma}\label{lm:dist_f_Q} For each $t\in [0,T]$, $x_1\in \mathcal{K}$, ${x}_2\in \mathcal{S}$, $\vartheta\in \mathcal{P}((\extK)\times(\extS))$, and $\mu_1\triangleq \operatorname{p}^1\sharp\vartheta$, $\mu_2\triangleq \operatorname{p}^2\sharp\vartheta$,
	\[\begin{split}
		\Bigl|\langle f_R(t,x_1,\mu_1),\nabla_{x_1}\varphi_\varepsilon(x_1,{x}_2)\rangle + \sum_{{y}\in\mathcal{S}}&\varphi_\varepsilon(x_1,{y}) Q_{{x}_2,{y}}(t,\mathscr{I}^{-1}(R\mu_2|_{\mathcal{S}}))\Bigr|\\&\leq C_{Lf}(\varphi_\varepsilon(x_1,{x}_2)+RW_1(\mu_1,\mu_2))+2\varepsilon.\end{split}\]
	
\end{lemma}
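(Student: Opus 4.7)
The plan is to exploit the Kolmogorov property of $Q$, perform a second-order Taylor expansion of $\varphi_\varepsilon$ in its second argument, and then apply the closeness of $Q$ to the drift $f$ expressed in conditions \ref{cond:f_Q} and \ref{cond:variance}, finishing with the Lipschitz bound on $f$. Setting $\beta\triangleq\mathscr{I}^{-1}(R\mu_2|_{\mathcal{S}})$, so that $\mathscr{I}(\beta)=R\mu_2|_{\mathcal{S}}$, I would first use $\sum_{y\in\mathcal{S}}Q_{x_2,y}(t,\beta)=0$ to recenter the $Q$-sum:
\[\sum_{y\in\mathcal{S}}\varphi_\varepsilon(x_1,y)Q_{x_2,y}(t,\beta)=\sum_{y\in\mathcal{S}}[\varphi_\varepsilon(x_1,y)-\varphi_\varepsilon(x_1,x_2)]Q_{x_2,y}(t,\beta).\]

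Next, a direct computation gives $\nabla_{x_2}\varphi_\varepsilon(x_1,x_2)=-(x_1-x_2)/\varphi_\varepsilon(x_1,x_2)=-\nabla_{x_1}\varphi_\varepsilon(x_1,x_2)$, so $\|\nabla_{x_2}\varphi_\varepsilon\|\leq 1$, together with the Hessian bound $\|\nabla^2_{x_2}\varphi_\varepsilon(x_1,\cdot)\|\leq 1/\varepsilon$. A second-order Taylor expansion in $y$ around $x_2$, applied under the sum---noting that the bracket vanishes at $y=x_2$ while $Q_{x_2,y}\geq 0$ for $y\neq x_2$---yields by condition \ref{cond:variance} a remainder bounded by $\frac{1}{2\varepsilon}\cdot\varepsilon^2=\varepsilon/2$. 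Condition \ref{cond:f_Q} then allows me to replace $\sum_{y}(y-x_2)Q_{x_2,y}(t,\beta)$ with $f(t,x_2,\mathscr{I}(\beta))$ at an additional cost of at most $\varepsilon$. Using $\nabla_{x_2}\varphi_\varepsilon=-\nabla_{x_1}\varphi_\varepsilon$, the left-hand side of the lemma reduces to
\[\bigl|\langle f_R(t,x_1,\mu_1)-f(t,x_2,\mathscr{I}(\beta)),\,\nabla_{x_1}\varphi_\varepsilon(x_1,x_2)\rangle\bigr|\]
modulo a total error of at most $3\varepsilon/2\leq 2\varepsilon$.

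Finally, since $x_1\in\mathcal{K}$ we have $f_R(t,x_1,\mu_1)=f(t,x_1,R\mu_1|_{\rd})$, while $\mathscr{I}(\beta)=R\mu_2|_{\mathcal{S}}$; Proposition~\ref{prop:W_p_b} identifies $\mathcal{W}_{1,b}(R\mu_1|_{\rd},R\mu_2|_{\mathcal{S}})$ with $RW_1(\mu_1,\mu_2)$. Combining the Lipschitz hypothesis \ref{cond:main:Lip} on $f$ with $\|\nabla_{x_1}\varphi_\varepsilon\|\leq 1$ and $\|x_1-x_2\|\leq\varphi_\varepsilon(x_1,x_2)$ produces the desired estimate. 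The only step that demands real care will be the Hessian bound $\|\nabla^2_{x_2}\varphi_\varepsilon\|\leq 1/\varepsilon$, but this is routine and is precisely the reason condition \ref{cond:variance} is formulated as a quadratic moment of order $\varepsilon^2$: it makes the Taylor remainder $O(\varepsilon)$ rather than $O(1)$.
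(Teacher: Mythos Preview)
Your proposal is correct and follows essentially the same route as the paper: recenter the $Q$-sum via the Kolmogorov property, Taylor expand $\varphi_\varepsilon$ in the second variable to isolate a first-order drift term plus a quadratic remainder controlled by \ref{cond:variance}, invoke \ref{cond:f_Q} to swap the discrete drift for $f$, and finish with the Lipschitz estimate on $f$ together with $\|x_1-x_2\|\leq\varphi_\varepsilon(x_1,x_2)$. Your use of the uniform Hessian bound $\|\nabla^2_{x_2}\varphi_\varepsilon\|\leq 1/\varepsilon$ is slightly cleaner than the paper's explicit remainder computation (and in fact yields $\varepsilon/2$ rather than $\varepsilon$ for that term), but the argument is otherwise identical.
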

\begin{proof}
	First, we have that 
	\begin{equation}\label{coupled:equality:derivative_phi}
		\nabla_{x_1}\varphi_\varepsilon(x_1,x_2)=\frac{x_1-x_2}{\sqrt{\|x_1-x_2\|^2+\varepsilon^2}}.
	\end{equation} Expanding the function $y\mapsto \varphi_\varepsilon(x_1,y)$ using the Taylor series   about the point $x_2$, we have that 
	\begin{equation}\label{coupled:equality_tailor}
		\begin{split}
			\varphi_\varepsilon(x_1,y)=\varphi_\varepsilon(x_1,x_2)&-
			\frac{\langle x_1-x_2,y-x_2\rangle}{\sqrt{\|x_1-x_2\|^2+\varepsilon^2}}\\&+ \frac{\|y-x_2\|^2(\|y'-x_1\|^2+\varepsilon^2)-(\langle y'-x_1,y-x_2\rangle)^2}{2(\|y'-x_1\|^2+\varepsilon^2)^{3/2}}.\end{split}
	\end{equation} Here $y'$ lies in the segment $[x_1,y]$. Since $|\langle x_2-x_1,y-x_2\rangle|\leq \|y-x_2\|\cdot\|x_2-x_1\|$, we conclude that, for each $y\neq x_2$,
	\[\begin{split}
		\Bigg|&\frac{\|y-x_2\|^2(\|y'-x_1\|^2+\varepsilon^2)-(\langle y'-x_1,y-x_2\rangle)^2}{2(\|y'-x_1\|^2+\varepsilon^2)^{3/2}}\Bigg|\\&{}\hspace{30pt}\leq 
		\frac{\|y-x_2\|^2(\|y'-x_1\|^2+\varepsilon^2)}{2(\|y'-x_1\|^2+\varepsilon^2)^{3/2}}+\frac{|\langle y'-x_1,y-x_2\rangle|^2}{2(\|y'-x_1\|^2+\varepsilon^2)^{3/2}}\leq \|y-x_2\|^2\varepsilon^{-1}.
	\end{split}\]
	Using this and~\eqref{coupled:equality_tailor}, we conclude that
	\[
	\Bigg|\varphi_\varepsilon(x_1,y)- \Bigg(\varphi_\varepsilon(x_1,x_2)-
	\frac{\langle x_1-x_2,y-x_2\rangle}{\sqrt{\|x_1-x_2\|^2+\varepsilon^2}}\Bigg)\Bigg|\leq \varepsilon^{-1}\|y-x_2\|^2.
	\]
	Since $Q(t,\mathscr{I}^{-1}(R\mu_2|_{\mathcal{S}}))$ is a Kolmogorov matrix, we derive that
	\begin{equation*}
		\begin{split}
			\Bigg|\sum_{{y}\in\mathcal{S}}\varphi_\varepsilon(x_1,{y})&Q_{x_2,{y}}(t,\mathscr{I}^{-1}(R\mu_2|_{\mathcal{S}}))\\&+\sum_{{y}\in\mathcal{S},{y}\neq x_2}
			\frac{\langle x_1-x_2,y-x_2\rangle}{\sqrt{\|x_1-x_2\|^2+\varepsilon^2}}Q_{x_2,{y}}(t,\mathscr{I}^{-1}(R\mu_2|_{\mathcal{S}}))\Bigg|\\
			\leq\varepsilon^{-1}\sum_{{y}\in\mathcal{S}}&\|y-x_2\|^2Q_{x_2,{y}}(t,\mathscr{I}^{-1}(R\mu_2|_{\mathcal{S}})).
		\end{split}
	\end{equation*}	Due to~\ref{cond:variance}, the right-hand side of this inequality is bounded by $\varepsilon$. Thus,
	\begin{equation}\label{coupled:ineq:phi_2}
		\begin{split}
			\Bigg|\sum_{{y}\in\mathcal{S}} \varphi_\varepsilon(x_1,{y})Q_{x_2,{y}} &(t,\mathscr{I}^{-1}(R\mu_2|_{\mathcal{S}}))\\&+\sum_{{y}\in\mathcal{S},{y}\neq x_2}
			\frac{\langle x-x_2,y-x_2\rangle}{\sqrt{\|x-x_2\|^2+ \varepsilon^2}}Q_{x_2,{y}}(t,\mathscr{I}^{-1}(R\mu_2|_{\mathcal{S}}))\Bigg| \leq\varepsilon.\end{split}
	\end{equation}
	The Lipschitz continuity of the function $f$ gives that
	\begin{equation}
		\begin{split}
			\Bigl|\langle f_R(t,x_1,\mu_2),\nabla_{x_1}\varphi_\varepsilon(x_1,x_2)\rangle + \sum_{{y}\in\mathcal{S}}\varphi_\varepsilon(x_1,&{y}) Q_{x_2,{y}}(t,\mathscr{I}^{-1}(R\mu_2|_{\mathcal{S}}))\Bigr|\\ \leq 
			\Bigl|\langle f_R(t,x_2,\mu_2),\nabla_{x_1}\varphi_\varepsilon(x_1,x_2)\rangle + \sum_{{y}\in\mathcal{S}}&\varphi_\varepsilon(x_1,{y}) Q_{x_2,{y}}(t,\mathscr{I}^{-1}(R\mu_2|_{\mathcal{S}}))\Bigr|\\&+C_{Lf}(\|x_1-x_2\|+RW_{1}(\mu_1,\mu_2)).
		\end{split}
	\end{equation} Taking into account~\eqref{coupled:equality:derivative_phi} and~\eqref{coupled:ineq:phi_2}, we conclude that 
	\begin{equation*}\label{coupled:ineq:f_Q_1}
		\begin{split}
			\Bigl|\langle f_R(t,x_1,\mu_1),\nabla_{x_1}\varphi_\varepsilon(x_1,&x_2)\rangle + \sum_{{y}\in\mathcal{S}}\varphi_\varepsilon(x_1,{y}) Q_{x_2,{y}}(t,\mathscr{I}^{-1}(R\mu_2|_{\mathcal{S}}))\Bigr|\\ \leq \frac{1}{\sqrt{\|x_1-x_2\|^2+\varepsilon^2}}
			&{}\Bigg|\Bigl\langle f_R(t,x_2,\mu_2)\\&{}\hspace{20pt}-\sum_{{y}\in\mathcal{S}}(y-x_2)Q_{x_2,{y}}(t,\mathscr{I}^{-1}(R\mu_2|_{\mathcal{S}})),x_1-x_2\Bigr\rangle \Bigg|\\&+
			C_{Lf}(\|x_1-x_2\|+RW_{1}(\mu_1,\mu_2))+\varepsilon.
		\end{split}
	\end{equation*} Condition~\ref{cond:f_Q} and the definition of the function $f_R$ yield that 
	\[\Bigg|f_R(t,x_2,\mu_2)-\sum_{{y}\in\mathcal{S}}(y-x_2)Q_{x_2,{y}}(t,\mathscr{I}^{-1}(R\mu_2|_{\mathcal{S}}))\Bigg|\leq \varepsilon.\] Therefore,
	\begin{equation*}
		\begin{split}
			\Bigg|\langle f_R(t,x_1,&\mu_1),\nabla_{x_1}\varphi_\varepsilon(x_1,x_2)\rangle + \sum_{{y}\in\mathcal{S}}\varphi_\varepsilon(x_1,{y}) Q_{x_2,{y}}(t,\mathscr{I}^{-1}(R\mu_2|_{\mathcal{S}}))\Bigg|\\ &\leq \varepsilon\frac{\|x_1-x_2\|}{\sqrt{\|x_1-x_2\|^2+\varepsilon^2}}+
			C_{Lf}(\|x_1-x_2\|+RW_{1}(\mu_1,\mu_2))+\varepsilon.
		\end{split}
	\end{equation*} This, together with the inequality $\|x_1-x_2\|\leq \sqrt{\|x_1-x_2\|^2+\varepsilon^2}=\varphi_\varepsilon(x_1,x_2)$ imply the statement of the lemma.
\end{proof}

The following lemma gives an evaluation of the third term of the generator in the case when the test function is equal to $\varphi_\varepsilon$.
\begin{lemma}\label{lm:action_chi_minus}
	There exists a constant $C_{13}$ depending only on $f$ and $g$ such that, for each $t\in [0,T]$, $x_1,x_2\in\extK$, $\vartheta\in \mathcal{P}^1((\extK)\times (\extS))$, $\mu_1\triangleq\operatorname{p}^1\sharp\vartheta$, $\mu_2\triangleq \operatorname{p}^2\sharp\vartheta$, one has
	\[\begin{split}\Bigg|\int_{\{0,1\}\times \{0,1\}} [\varphi_\varepsilon(x_1+u_1(\star-x_1),x_2
		&+u_2(\star-x_2)) \\-\varphi_\varepsilon(x_1,x_2)&]\chi^-(t,x_1,x_2,\vartheta,d(u_1,u_2))\Bigg|\\ \leq \varepsilon C_g+C_{13}\varphi_\varepsilon(x_1,x_2)+C_{13}&RW_1(\mu_1,\mu_2).\end{split}\]
\end{lemma}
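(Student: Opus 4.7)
The measure $\chi^-(t,x_1,x_2,\vartheta,\cdot)$ is atomic with support in $\{(1,1),(1,0),(0,1)\}$, so the integral is a three-term sum. Abbreviating $\alpha\triangleq g_R^-(t,x_1,\mu_1)$ and $\kappa\triangleq g_R^-(t,x_2,\mu_2)$ and using $\varphi_\varepsilon(\star,\star)=\varepsilon$, I would rewrite the integral as
\[I=(\alpha\wedge\kappa)\bigl[\varepsilon-\varphi_\varepsilon(x_1,x_2)\bigr]+(\alpha-\alpha\wedge\kappa)\bigl[\varphi_\varepsilon(\star,x_2)-\varphi_\varepsilon(x_1,x_2)\bigr]+(\kappa-\alpha\wedge\kappa)\bigl[\varphi_\varepsilon(x_1,\star)-\varphi_\varepsilon(x_1,x_2)\bigr].\]

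The ``simultaneous jump'' term I would bound crudely by the triangle inequality as $(\alpha\wedge\kappa)(\varepsilon+\varphi_\varepsilon(x_1,x_2))\leq C_g\varepsilon+C_g\varphi_\varepsilon(x_1,x_2)$; this is precisely where the $\varepsilon C_g$ contribution in the statement arises. For the two asymmetric brackets, I would exploit that $\varphi_\varepsilon$ is $1$-Lipschitz in each of its arguments (since $s\mapsto\sqrt{s^2+\varepsilon^2}$ is $1$-Lipschitz), so by the reverse triangle inequality each bracket has magnitude at most $\|\star-x_1\|$ or $\|\star-x_2\|$, which are in turn bounded by $b$. Observing that $(\alpha-\alpha\wedge\kappa)+(\kappa-\alpha\wedge\kappa)=|\alpha-\kappa|$, the joint contribution of the asymmetric brackets to $|I|$ is therefore at most $b|\alpha-\kappa|$.

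To finish, I would combine the Lipschitz continuity of $g^-$ inherited from \ref{cond:main:Lip} with Proposition~\ref{prop:W_p_b}, which identifies $\mathcal{W}_{1,b}(R\mu_1|_{\rd},R\mu_2|_{\rd})=RW_1(\mu_1,\mu_2)$, to deduce
\[|\alpha-\kappa|\leq C_{Lg}\bigl(\|x_1-x_2\|+RW_1(\mu_1,\mu_2)\bigr)\leq C_{Lg}\bigl(\varphi_\varepsilon(x_1,x_2)+RW_1(\mu_1,\mu_2)\bigr),\]
using $\|x_1-x_2\|\leq\varphi_\varepsilon(x_1,x_2)$. Setting $C_{13}\triangleq C_g+bC_{Lg}$ then yields the stated bound. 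The argument is uniform in the position of $x_1,x_2$ relative to $\star$ because $g_R^-(t,\star,\cdot)\equiv 0$ automatically kills the atoms involving a particle already at the remote point.

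The only mildly delicate point is the algebraic bookkeeping that verifies the min/max decomposition of $\chi^-$ really produces the cancellation $(\alpha-\alpha\wedge\kappa)+(\kappa-\alpha\wedge\kappa)=|\alpha-\kappa|$, which drives the whole synchronization strategy; but this is a routine identity rather than a genuine obstacle.
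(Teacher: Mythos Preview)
Your proposal is correct and follows essentially the same route as the paper: expand the integral over the three atoms of $\chi^-$, collapse the asymmetric weights via $(\alpha-\alpha\wedge\kappa)+(\kappa-\alpha\wedge\kappa)=|\alpha-\kappa|$, and invoke the Lipschitz bound on $g^-$ together with $\|x_1-x_2\|\leq\varphi_\varepsilon(x_1,x_2)$. The only cosmetic difference is that the paper groups the terms as $\varepsilon(\alpha\wedge\kappa)+\sqrt{b^2+\varepsilon^2}\,|\alpha-\kappa|-\varphi_\varepsilon(x_1,x_2)(\alpha\vee\kappa)$ using the exact value $\varphi_\varepsilon(\cdot,\star)=\sqrt{b^2+\varepsilon^2}$, whereas you keep each bracket intact and bound it by $b$ via the $1$-Lipschitz property; your constant $b$ is even slightly sharper than the paper's $\sqrt{b^2+\varepsilon^2}$, but the arguments are otherwise identical.
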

\begin{proof} 
	Direct computations yield that 
	\[\begin{split}\int_{\{0,1\}\times \{0,1\}} [\varphi_\varepsilon&(x_1+u_1(\star-x_1),x_2
		+u_2(\star-x_2)) -\varphi_\varepsilon(x_1,x_2)]\\&{}\hspace{171pt}\chi^-(t,x_1,x_2,\vartheta,d(u_1,u_2))
		\\=(\varphi_\varepsilon(\star&,\star)-\varphi_\varepsilon(x_1,x_2))(g_R^-(t,x_1,\mu_1)\wedge g_R^-(t,x_2,\mu_2))\\+&(\varphi_\varepsilon(\star,x_2)-\varphi_\varepsilon(x_1,x_2))(g_R^-(t,x_1,\mu_1)-(g_R^-(t,x_1,\mu_1)\wedge g_R^-(t,x_2,\mu_2)))
		\\+&(\varphi_\varepsilon(x_1,\star)-\varphi_\varepsilon(x_1,x_2))(g_R^-(t,x_2,\mu_2)-(g_R^-(t,x_1,\mu_1)\wedge g_R^-(t,x_2,\mu_2))).
	\end{split} 
	\] Further, taking into account the facts that $\varphi_\varepsilon(\star,\star)=\varepsilon$, while $\varphi_\varepsilon(x_1,\star)=\varphi_\varepsilon(\star,x_2)=\sqrt{b^2+\varepsilon^2}$, we have that 
	\[\begin{split}\int_{\{0,1\}\times \{0,1\}} &[\varphi_\varepsilon(x_1+u_1(\star-x_1),x_2
		+u_2(\star-x_2)) -\varphi_\varepsilon(x_1,x_2)]\\&{}\hspace{171pt}\chi^-(t,x_1,x_2,\vartheta,d(u_1,u_2))
		\\=\varepsilon(g_R^-&(t,x_1,\mu_1)\wedge g_R^-(t,x_2,\mu_2))\\+&\sqrt{b^2+\varepsilon^2}((g_R^-(t,x_1,\mu_1)\vee g_R^-(t,x_2,\mu_2))-(g_R^-(t,x_1,\mu_1)\wedge g_R^-(t,x_2,\mu_2)))
		\\-&\varphi_\varepsilon(x_1,x_2)(g_R^-(t,x_1,\mu_1)\vee g_R^-(t,x_2,\mu_2)).
	\end{split} 
	\] The deduced equality, the boundedness of the function $g^-_R$ as its Lipschitz continuity w.r.t. the phase and measure variables give that 
	\[\begin{split}\Bigg|\int_{\{0,1\}\times \{0,1\}} [\varphi_\varepsilon(x_1+u_1(\star-x_1),x_2
		+u_2(\star-x_2)) &-\varphi_\varepsilon(x_1,x_2)]\\&\chi^-(t,x_1,x_2,\vartheta,d(u_1,u_2))\Bigg|
		\\ \leq\varepsilon C_g+\sqrt{b^2+\varepsilon^2}C_{Lg}(\|x_1-x_2\|+RW_1(\mu_1&,\mu_2))
		+\varphi_\varepsilon(x_1,x_2)C_g.
	\end{split} 
	\] This implies the conclusion of the lemma.
\end{proof}

Now let us evaluate the action of the measure $\chi^+$ on the regularized distance function. 
\begin{lemma}\label{lm:chi_plus} There exists a  constant $C_{14}$ such that, for each $t\in[0,T]$, $\vartheta\in \mathcal{P}((\extK)\times (\extS))$, $\mu_1\triangleq\operatorname{p}^1\sharp\vartheta$, $\mu_2\triangleq\operatorname{p}^2\sharp\vartheta$, the following estimate holds true:
	\[
	\begin{split}
		\Bigg|\int_{(\extK)\times (\extS)}[&\varphi_\varepsilon(y_1,y_2) -\varphi_\varepsilon(\star,\star)]\chi^+(t,\star,\star,\vartheta,d(y_1,y_2))\Bigg|\\ \leq
		\vartheta^{-1}(\{\star,&\star\})\Bigg[C_g\varepsilon+C_{14}\int_{\mathcal{K}\times \mathcal{S}}\varphi_\varepsilon(y_1,y_2)\vartheta(d(y_1,y_2))+C_{14}RW_1(\mu_1,\mu_2)\Bigg].
	\end{split}
	\]
\end{lemma}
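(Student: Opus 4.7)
The strategy is to substitute the definition~\eqref{coupled:intro:chi_plus_st_st} of $\chi^+(t,\star,\star,\vartheta,\cdot)$ directly into the integral, simplify the resulting sum using elementary identities for $\wedge$ and $\vee$, and then estimate each piece via the boundedness and Lipschitz continuity of $g^+_R$. Denoting $a_i\triangleq g^+_R(t,y_i,\mu_i)$ and recalling $\varphi_\varepsilon(\star,\star)=\varepsilon$, $\varphi_\varepsilon(y_1,\star)=\varphi_\varepsilon(\star,y_2)=\sqrt{b^2+\varepsilon^2}$ (since we assume $b>\operatorname{diam}(\mathcal{K})+1\ge\operatorname{diam}(\mathcal{K}\cup\mathcal{S})$), the integrand against $\vartheta$ collapses to
\[
\varphi_\varepsilon(y_1,y_2)(a_1\wedge a_2)+\sqrt{b^2+\varepsilon^2}\,|a_1-a_2|-\varepsilon\,(a_1\vee a_2),
\]
using the elementary identities $(a_1\wedge a_2)+(a_1-a_1\wedge a_2)+(a_2-a_1\wedge a_2)=a_1\vee a_2$ and $(a_1-a_1\wedge a_2)+(a_2-a_1\wedge a_2)=|a_1-a_2|$.

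Next I would bound the three pieces separately. Condition~\ref{cond:main:boundness} gives $0\le a_1\wedge a_2\le a_1\vee a_2\le C_g$, so the first term is at most $C_g\varphi_\varepsilon(y_1,y_2)$, while the third contributes at most $\varepsilon C_g$ after integration against $\vartheta$ (using $\vartheta(\mathcal{K}\times\mathcal{S})\le 1$). For the middle term, since $s\mapsto s\vee 0$ is nonexpansive, $g^+$ inherits the Lipschitz constant $C_{Lg}$ of $g$; combining this with Proposition~\ref{prop:W_p_b} -- viewing both $\mu_1\in\mathcal{P}(\extK)$ and $\mu_2\in\mathcal{P}(\extS)$ as probabilities on the common augmented space $\extRd$ so that $\mathcal{W}_{1,b}(R\mu_1|_{\rd},R\mu_2|_{\rd})=RW_1(\mu_1,\mu_2)$ -- yields
\[
|a_1-a_2|\le C_{Lg}\bigl(\|y_1-y_2\|+RW_1(\mu_1,\mu_2)\bigr)\le C_{Lg}\bigl(\varphi_\varepsilon(y_1,y_2)+RW_1(\mu_1,\mu_2)\bigr),
\]
where at the end I used $\|y_1-y_2\|\le\varphi_\varepsilon(y_1,y_2)$.

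Collecting these estimates and applying $\sqrt{b^2+\varepsilon^2}\le\sqrt{b^2+1}$ (since $\varepsilon\le 1$), the integrand is pointwise dominated by $(C_g+\sqrt{b^2+1}\,C_{Lg})\,\varphi_\varepsilon(y_1,y_2)+\sqrt{b^2+1}\,C_{Lg}\,RW_1(\mu_1,\mu_2)+\varepsilon C_g$. Integrating over $\mathcal{K}\times\mathcal{S}$ against $\vartheta$ and multiplying by the prefactor $(\vartheta(\{(\star,\star)\}))^{-1}$ inherited from~\eqref{coupled:intro:chi_plus_st_st} then produces the required bound with $C_{14}\triangleq C_g+\sqrt{b^2+1}\,C_{Lg}$. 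The only delicate point -- and, such as it is, the main obstacle beyond routine bookkeeping -- is the identification of $\mu_1$ and $\mu_2$ as probabilities on a common Polish space when invoking the Lipschitz estimate for $g^+$, so that the Wasserstein quantity controlling $|a_1-a_2|$ is exactly the $RW_1(\mu_1,\mu_2)$ appearing in the statement of the lemma.
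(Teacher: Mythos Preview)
Your proof is correct and follows essentially the same approach as the paper's: both expand the definition of $\chi^+(t,\star,\star,\vartheta,\cdot)$, use the identities for $\wedge$ and $\vee$ to simplify, and then bound via the boundedness and Lipschitz continuity of $g^+_R$. The only cosmetic difference is that the paper splits the integral into the $\varphi_\varepsilon(y_1,y_2)$-part and the $\varphi_\varepsilon(\star,\star)$-part and bounds them separately, whereas you combine them into a single integrand before estimating; your explicit identification of the constant $C_{14}=C_g+\sqrt{b^2+1}\,C_{Lg}$ and your remark on viewing $\mu_1,\mu_2$ as probabilities on the common space $\extRd$ are both consistent with what the paper uses implicitly.
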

\begin{proof} 
	By the definition of the measure $\chi^+(t,\star,\star,\vartheta,\cdot)$ (see~\eqref{coupled:intro:chi_plus_st_st}), we have that 
	\[\begin{split}
		\vartheta(\{\star,\star\})&\int_{(\extK)\times (\extS)}\varphi_\varepsilon(y_1,y_2)\chi^+(t,\star,\star,\vartheta,d(y_1,y_2))\\=
		\int_{\mathcal{K}\times \mathcal{S}}&\big[\varphi_\varepsilon(y_1,y_2)(g_R^+(t,y_1,\mu_1)\wedge g_R^+(t,y_2,\mu_2))\\&+\sqrt{b^2+\varepsilon^2}((g_R^+(t,y_1,\mu_1)\vee g_R^+(t,y_2,\mu_2))-(g_R^+(t,y_1,\mu_1)\wedge g_R^+(t,y_2,\mu_2)))\big]\\&\hspace{278pt}\vartheta(d(y_1,y_2)).	\end{split}
	\]
	The boundedness and the Lipschitz continuity of the function $g$ (and, thus, the function $g^+_R$) yield that
	\begin{equation}\label{coupled:ineq:chi_plus_varphi:1}
		\begin{split}
			\vartheta(\{\star,\star\})&\int_{(\extK)\times (\extS)}\varphi_\varepsilon(y_1,y_2)\chi^+(t,\star,\star,\vartheta,d(y_1,y_2))\\ \leq  \int_{\mathcal{K}\times \mathcal{S}}&[C_g\varphi_\varepsilon(y_1,y_2)+C_{Lg}\sqrt{b^2+\varepsilon^2} (\|y_1-y_2\|+RW_1(\mu_1,\mu_2))]\vartheta(d(y_1,y_2)).
		\end{split}
	\end{equation}
	Further, we have that 
	\[\begin{split}
		\vartheta(\{\star,&\star\})\int_{(\extK)\times (\extS)}\varphi_\varepsilon(\star,\star)\chi^+(t,\star,\star,\vartheta,d(y_1,y_2))\\&=\varepsilon\int_{\mathcal{K}\times \mathcal{S}}((g_R^+(t,y_1,\mu_1)\vee g_R^+(t,y_2,\mu_2))-(g_R^+(t,y_1,\mu_1)\wedge g_R^+(t,y_2,\mu_2)))\\&{}\hspace{273pt}\vartheta(d(y_1,y_2))\\&\leq  C_g\varepsilon.
	\end{split}
	\]
	This and~\eqref{coupled:ineq:chi_plus_varphi:1} imply the statement of the lemma.
\end{proof}

\begin{proof}[Proof of Theorem~\ref{th:approx}]
	Let $\vartheta(\cdot):[0,T]\rightarrow\mathcal{P}^1((\extK)\times(\extS))$ satisfy the following conditions:
	\begin{itemize}
		\item $\vartheta(0)$ is an optimal plan between $\mu_{0,1}\triangleq R^{-1}\mext{m_0}{R}$ and $\mu_{0,2}\triangleq R^{-1}\mext{(\mathscr{I}(\mu_0))}{R}$;
		\item $\vartheta(\cdot)$ is a solution to~\eqref{coupled:eq:Lambda}.
	\end{itemize} 
	Notice that 
	\begin{equation}\label{coupled:ineq:W}
		W_{1}(\operatorname{p}^1\sharp\vartheta(t),\operatorname{p}^2\sharp\vartheta(t))\leq \int_{(\extK)\times (\extS)}\varphi_\varepsilon(x_1,x_2)\vartheta(t,d(x_1,x_2)).
	\end{equation} 
	Therefore, integrating the estimates proved in Lemmas~\ref{lm:dist_f_Q}--\ref{lm:chi_plus}, we conclude that 
	\[\begin{split}
		\int_{(\extK)\times (\extS)}\varphi_\varepsilon(x_1,x_2)&\vartheta(t,d(x_1,x_2))\\\leq \int_{(\extK)\times (\extS)}&\varphi_\varepsilon(x_1,x_2)\vartheta(0,d(x_1,x_2)) +C_{15}\varepsilon\\+C_{16}(1&+R)\int_0^t\int_{(\extK)\times (\extS)}\varphi_\varepsilon(x_1,x_2)\vartheta(s,d(x_1,x_2))ds.\end{split}\] Here $C_{15}$ and $C_{16}$ are constants determined only by functions $f$ and $g$. Using the Gronwall's inequality, we arrive at the estimate
	\[\begin{split}
		\int_{(\extK)\times (\extS)}\varphi_\varepsilon(x_1,x_2)&\vartheta(t,d(x_1,x_2))\\\leq \Bigg[\int_{(\extK)\times (\extS)}&\varphi_\varepsilon(x_1,x_2)\vartheta(0,d(x_1,x_2))+C_{15}\varepsilon\Bigg]e^{C_{16}(1+R)T}.
	\end{split}	
	\] Further, recall that $\varphi_\varepsilon(x_1,x_2)=\sqrt{\|x_1-x_2\|^2+\varepsilon^2}\leq \|x_1-x_2\|+\varepsilon$. Using this and~\eqref{coupled:ineq:W}, we conclude that 
	\[W_1(\operatorname{p}^1\sharp\vartheta(t),\operatorname{p}^2\sharp\vartheta(t))\leq \big[W_1(\operatorname{p}^1\sharp\vartheta(t),\operatorname{p}^2\sharp\vartheta(t))+(C_{15}+1)\varepsilon\big]e^{C_{16}(1+R)T}.
	\] The conclusion of the theorem follows from the definition of the flow $\vartheta(\cdot)$, equality~\eqref{intro:W_p_b} and Propositions~\ref{th:restriction_kynetic},~\ref{prop:generator_finite},~\ref{prop:generator_coupling}.
\end{proof}





\begin{appendices}
	
	\section{Uniqueness result for linear balance equation}\label{appendix:lm_unique}
	In this section we proof Lemma \ref{lm:unique} that is used in the proof of Theorem \ref{th:superposition}.
	
	\begin{proof}[Proof of Lemma \ref{lm:unique}]
		We will use the method borrowed from \cite[\S 8.1]{Ambrosio}. First, let $m_1(\cdot)$, $m_2(\cdot)$ be two solutions of~\eqref{super:eq:PDE}. Thus, the flow of signed measures $m_S(\cdot)$ defined by the rule
		\[m_S(t)=m_1(t)-m_2(t)\] also satisfies~\eqref{super:eq:PDE} in the weak sense with $m_0=0$.
		
		If $(s,y)\in [0,T]\times\rd$, then denote by $x(\cdot,s,y)$, the solution of the initial value problem
		\[\frac{d}{dt}x(t,s,y)=v(t,x(t,s,y)),\ \ x(s,s,y)=y.\] Further, let $w(\cdot,y)$ satisfy
		\[\frac{d}{dt}w(t,y)=-z(t,x(t,0,y))w(t,y),\ \ w(0,y)=1.\]  Notice that $w(t,y)>0$ for all $t\in [0,T]$. Additionally, set
		\[\zeta(s,y)=w(s,x(0,s,y)).\] This implies that 
		\[\frac{d}{dt}\zeta(t,x(t,s,y))=-z(t,x(t,s,y))\zeta(t,x(t,s,y))\]
		
		Choose an arbitrary function $\psi\in C_c^1([0,T]\times\rd)$ and set
		\[\xi(s,y)\triangleq -\int_s^T\psi(t,x(t,s,y))(\zeta(t,x(t,s,y)))^{-1}dt.\] 
		
		Now let us introduce the function $\phi(s,y)\triangleq \xi(s,y)\zeta(s,y)$. Notice that $\phi(T,y)\equiv 0$. Moreover, if we define $y'\triangleq x(0,s,y)$, then 
		\[\phi(s,y)=\phi(s,x(s,0,y')).\] Hence,
		\[\begin{split}
			\partial_s\phi(&s,y)+\langle\nabla\phi(s,y),v(s,y)\rangle= \frac{d}{ds}\phi(s,x(s,0,y'))\\&=\psi(s,x(s,0,y'))-\xi(s,x(s,0,y'))\zeta(s,x(s,0,y')) z(s,x(s,0,y')).\end{split}\] Due to  the definitions of $y'$ and the function $\phi$, we have that 
		\[\partial_s\phi(s,y)+\langle\nabla\phi(s,y),v(s,y)\rangle=\psi(s,y)-\phi(s,y)z(s,y).\]
		Therefore, using the equalities $m_S(0)=0$, $\phi(T,\cdot)\equiv 0$ and the fact that $m_S(\cdot)$ satisfies~\eqref{super:eq:PDE}, we conclude that
		\[\begin{split}
			0&=\int_0^T\int_{\rd}[\partial_t\phi(t,x)+\langle\nabla\phi(t,x),v(t,x)\rangle+\phi(t,x)z(t,x)]m_S(t,dx)dt\\&=
			\int_0^T\int_{\rd}\psi(t,x)m_S(t,dx)dt.
		\end{split}\] This means that, for every $\psi\in C_c^1([0,T]\times \rd)$,
		\[\int_0^T\int_{\rd}\psi(t,x)m_S(t,dx)dt=0.\]
		Thus, $m_1(t)=m_2(t)$ for a.e. $t\in [0,T]$.
	\end{proof}
	
	\begin{remark} Using the convolution technique, one can extend the result of Lemma \ref{lm:unique} to the case where $v$ is only continuous.
	\end{remark}

	\section{Distances between measures on a finite set}\label{appendix:distance}
	The aim of this appendix is to show the equivalence between distances $\|\beta^1-\beta^2\|$ and $\mathcal{W}_{1,b}(\mathscr{I}(\beta^1),\mathscr{I}(\beta^2))$ stated in Section~\ref{sect:finite}.
	\begin{proof}[Proof of Proposition~\ref{prop:metric_equivalence}]
		First, given $\beta^1_{\mathcal{S}},\beta^2_{\mathcal{S}}\in\mathscr{l}_1^+(\mathcal{S})$, we denote by $\mathcal{A}(\beta^1_{\mathcal{S}},\beta^2_{\mathcal{S}})$ the set of  matrices $(a_{{x},{y}})_{{x},{y}\in\mathcal{S}\cup\{\star\}}$ with nonnegative entries such that, for each ${x}\in\mathcal{S}$ and $y\in\mathcal{S}$,
		\[\sum_{{y'}\in\mathcal{S}\cup\{\star\}}a_{{x},{y'}}=\beta^1_{{x}},\ \ \sum_{{x'}\in\mathcal{S}\cup\{\star\}}a_{{x'},{y}}=\beta^2_{{y}}. \]

		From the definition of the metric $\mathcal{W}_{1,b}$ and the isomorphism $\beta\mapsto\mathscr{I}(\beta_{\mathcal{S}})$, it follows that 
		\begin{equation*}\label{appendix_A:equality:W} 
			\mathcal{W}_{1,b}(\mathscr{I}(\beta^1_{\mathcal{S}}), \mathscr{I}(\beta^2_{\mathcal{S}})) =\inf\Bigg\{\sum_{{x},{y}\in\mathcal{S}\cup\{\star\}} \|{x}-{y}\|a_{{x},{y}}:(a_{{x},{y}})_{{x},{y}\in \mathcal{S}\cup\{\star\}}\in\mathcal{A}(\beta^1,\beta^2) \Bigg\}.
		\end{equation*} 
		Recall  that, by convention, for $x\in\mathcal{K}$, $\|x-\star\|=\|\star-x\|=b$, while $\|\star-\star\|=0$.
		
		Let us now prove the first inequality in \eqref{finite:ineq:W_metric}. To this end, we consider the  element $(a'_{{x},{y}})_{{x},{y}\in\mathcal{S}\cup\{\star\}}\subset \mathcal{A}(\beta_{\mathcal{S}}^1,\beta_{\mathcal{S}}^2)$ defined by the rule:
		\begin{equation*}
			a'_{{x},{y}}\triangleq \left\{
			\begin{array}{ll}
				\beta_{{x}}^1\wedge\beta_{{x}}^2, & {x}={y}\in\mathcal{S},\\
				0, & {x}\neq {y},\, {x},{y}\in\mathcal{S},\\
				\beta_{{x}}^1-(\beta_{{x}}^1\wedge\beta_{{x}}^2), & {x}\in\mathcal{S},\, {y}=\star,\\
				\beta_{{y}}^2-(\beta_{{y}}^1\wedge\beta_{{y}}^2), & {x}=\star,\, {y}\in \mathcal{S}.
			\end{array}
			\right.
		\end{equation*} We have that 
		\[\begin{split}
			\mathcal{W}_{1,b}(\mathscr{I}(&\beta^1_{\mathcal{S}}), \mathscr{I}(\beta^2_{\mathcal{S}}))\leq \sum_{{x},{y}\in\mathcal{S}\cup\{\star\}}\|{x}-{y}\|a'_{{x},{y}}\\&= \sum_{{x}\in\mathcal{S}}b[(\beta^1_{{x}}-(\beta_{{x}}^1\wedge\beta_{{x}}^2))+(\beta^2_{{x}}-(\beta_{{x}}^1\wedge\beta_{{x}}^2))]\\&= b\sum_{{x}\in\mathcal{S}}[(\beta_{{x}}^1\vee\beta_{{x}}^2)-(\beta_{{x}}^1\wedge\beta_{{x}}^2)]=b\sum_{{x}\in\mathcal{S}}|\beta_{{x}}^1-\beta_{{x}}^2|=b\|\beta_{\mathcal{S}}^1-\beta_{\mathcal{S}}^2\|_1 .\end{split}\] This gives the first inequality in \eqref{finite:ineq:W_metric}.
		
		Further, for each $(a_{{x},{y}})_{{x},{y}\in\mathcal{S}}\in \mathcal{A}(\beta_{\mathcal{S}}^1,\beta_{\mathcal{S}}^2)$, we have that 
		\[\begin{split}
			\sum_{{x},{y}\in\mathcal{S}\cup\{\star\}}\|x-y\|a_{{x},{y}}&\geq d(\mathcal{S}) \sum_{{x}\in\mathcal{S}\cup\{\star\}} \sum_{{y}\in\mathcal{S}\cup\{\star\},{y}\neq{x}} a_{{x},{y}}\\&\geq d(\mathcal{S})\sum_{{x}\in\mathcal{S}}|\beta^1_{{x}}-\beta^2_{{x}}| =d(\mathcal{S})\|\beta^1-\beta^2\|.
		\end{split}\] Taking the minimum w.r.t. elements of $\mathcal{A}(\beta^1,\beta^2)$, we arrive at the second inequality in~\eqref{finite:ineq:W_metric}.
		
	\end{proof}
	
\end{appendices}

\bibliography{source_sink}

\end{document}